\def \tbar{\tilde}
\def \rw {\rightarrow}
\def \be{\begin{equation}}
\def \cadlag {\mx{c\`adl\`ag}}
\def \ee{\end{equation}}
\def \ss {\mathcal{S}_{c}^{2}}
  \def \cft {(\cf_t)_{t\ge 0}}
  \def \rl {\R^\ell}
  \def \d{\Delta}
  \def \nn {\nonumber}
  \def \barl{\begin{array}{l}}
\def \ear{\end{array}}
\newtheorem{Assumption}{Assumption}[part]
\newtheorem{Corollary}{Corollary}[part]
\newtheorem{Lemma}{Lemma}[part]
\newtheorem{Proposition}{Proposition}[part]
\newtheorem{Remark}{Remark}[part]
\newtheorem{Theorem}{Theorem}[part]
\newcommand{\biindice}[3]%
{

\begin{array}[t]{c}
#1\\
{\scriptstyle #2}\\
{\scriptstyle #3}
\end{array}
}
\def \frto{\forall t\ge 0}
\newtheorem{\theHypothes}{\thesection.\arabic{Hypothes}}
\newcommand{\nc}{\newcommand}
\def \esssup{\mathop{\mathrm{ess\,sup}}}
\nc{\essinf}{\mathop{\mathrm{ess\,inf}}}
\nc{\argmax}{\mathop{\mathrm{arg\,max}}}
\def \qq {\qquad}
\def \ms{\medskip}
\def \P{\mathbb{P}}
\def \N{\mathbb{N}}
\def \R{\mathbb{R}}
\def \E{\mathbb{E}}
\def \F{\mathcal{F}}
\def \1{{\bf 1}}
\def \gs{\overline{\gamma}}
\def \A{\mathcal{A}}
\def \Fc{{\cal F}}
\def \Hc{{\cal H}}
\def \lb{\label}
\def \xa{\1_{[\xi=a]}}
\def \esssup{{\mbox{esssup}}}
\def \ts{\bar{\tau}}
\def \bt{{\bar{\tau}'}}
\def \bb{{\bar{\beta}'}}
\def\beqs{\begin{eqnarray*}}
\def\enqs{\end{eqnarray*}}
\def\beq{\begin{eqnarray}}
\def\enq{\end{eqnarray}}
\newcommand{\Sum} {\displaystyle \sum\limits}                
\newcommand{\Int} {\displaystyle \int}                       
\newcommand{\Sup}     {\displaystyle \sup\limits}
\def \t {\tau}
\def \b{\beta}
\def \cp {\mathcal{P}}
\title{Optimal stochastic impulse control
problem with delay with actions decided at the execution time}
\date{\today}
\author{Said Hamadene
\thanks{LMM, Le Mans University. e-mail:
\href{mailto: said.hamadene@univ-lemans.fr}{said.hamadene@univ-lemans.fr}. The author research is part of the ANR project DREAMeS (ANR-21-CE46-0002).}~~~ Ibtissam Hdhiri
\thanks{Faculty of sciences of Gab\`{e}s \& LR17ES11, University of Gab\`es. e-mail: \href{mailto: Ibtissem.Hdhiri@fsg.rnu.tn}
{: Ibtissem.Hdhiri@fsg.rnu.tn}} \\}
\begin{document}
\maketitle

\begin{abstract}
In this paper, we consider a class of stochastic impulse control problem when there is  a fixed delay $\Delta$ between the decision and execution times. The dynamics of the controlled system between two impulses is an arbitrary adapted stochastic process. Unlike the most existing literature, we consider the problem when the impulse sizes are decided at the execution time in both risk-neutral and risk-sensitive cases. This model fits more, in the real life, for some problems such as the pricing of swing options. The horizon $T$ of the problem can be finite or infinite. In each case we show the existence of an optimal impulse strategy. The main tools we use are the notions of reflected Backward Stochastic Differential Equations (BSDEs for short) and the Snell envelope of processes.
\end{abstract}
\vskip 0.2cm AMS subject Classifications: 60G40; 60H10; 93E20.
\vskip 0.2cm\noindent {\bf Key words~:}
Impulse control; Execution delay; Snell envelope; Stochastic control; Backward stochastic
differential equations; Optimal stopping time; Swing options.
\def \dl {\delta}

\section{Introduction}
The impulse control  problem consists of acting on a given system whose dynamics is described by a stochastic process ${(X_t)}_{t \in \mathbb T}$, at discrete times $\tau_k$ with impulses $\beta_k$, $k\ge 1$ ($\t_k\le \t_{k+1}$). The actions are not free and then need to be implemented in such a way to achieve a certain level of profitability, or even optimal profitability. The sequences $(\t_k)_{k\ge 1}$ and $(\beta_k)_{k\ge 1}$ are the parameters of interventions of the controller on the system and 
$\dl:=(\t_k,\b_k)_{k\ge 1}$ is called a strategy of impulse control. For $k\ge 1$, let $\psi(\beta_k)$ be the cost for the controller when she/he intervenes on the system at time $\t_k$ with the action $\beta_k$. The expected total payoff is then given by: 
\be\lb{costintro}
\E\big[\Phi\big( \int_{[0,T)} g(s,X_s^{\delta}) ds - \sum _{k \geq 1} \psi(\xi_k) \1_{[\tau_k < T]} \big)\big],
\ee
where $X^{\delta}$ stands for the controlled dynamics when the strategy $\dl$ is implemented, $g$ is the instantaneous reward function, $\Phi$ the utility function of the controller. The constant $T$ is finite (resp. infinite) when the horizon is finite (resp. infinite). The objective is then to find an optimal strategy, i.e., a strategy $\dl^*$ which maximizes \eqref{costintro}. 

The case $\Phi(x)=x$ (resp. $\Phi(x)=e^{\theta x}$) is called risk-neutral (resp. risk-sensitive). 

In the case when $X^\dl$ is Markovian in between two successive impulses (i.e. on the time interval $[\t_k,\t_{k+1}[$ for any $k\ge 0$ ($\t_0=0$)), the stochastic impulse control problem is solved mainly by using deterministic techniques (one can see e.g. \cite{Bensoussan, Korn, Harison, Jeanblanc, Oksendal2,lep-sm,Keppo}, etc. the list is far from being exhaustive). Now, when the dynamics is not necessarily Markovian, the impulse control problem was considered for the first time by Djehiche et al. \cite{Djehiche} who proposed a new approach based on the notions of Backward Stochastic Differential Equations and Snell envelope of processes. Later on, several works have been carried out on the subject including \cite{hdhirikrouf1,   perninge,  agram, hdhirizaatra} to quote a few. 

When the execution of impulses requires a preparatory work, a time delay $\Delta$ is integrated into the model. The decisions are taken at $\tau_k$ and executed at $\tau_k + \Delta$, see \cite{hdhirikrouf2,bhhz,Bayraktar,Bruder, Oksendal2}, etc. for more details. 
Except in \cite{Bayraktar}, in those works, the amplitude of the impulse $\b_k$ is decided at the decision time $\t_k$. However, in several realistic situations, although the controller takes decision at $\tau_k$, she/he may choose or be required to wait until the execution time $\t_k+\d$ before to determine the size of the intervention. 

This feature appears in the pricing of swing options which are options in markets of commodities (see e.g. \cite{pfbr,jrt,ew,diner,yeo} and the references therein). They consist of a contract which delineates the least and most energy that the option holder can buy (or "take") per day and per month, how much that energy will cost (known as its strike price), and how many times $N_c$ during the month she/he can change or "swing" the daily quantity of energy purchased.

Swing options are most commonly used for the purchase of oil, natural gas, and electricity. They may be used as hedging instruments by the option holder, to protect against price changes in these commodities. For example, a power company might use a swing option to manage changes in customer demand for electricity that occur throughout the month as temperatures rise and fall. These contracts are more intricate than they appear to be. Consequently, they tend to make valuation challenging. An oil company might also do the same with fuel for customer demand for heat during winter months.

The price paid for the commodity can either be fixed or floating. A floating or "indexed" price essentially means that it is linked to the price in the market. In contrast to a fixed price contract, an indexed price contract is less flexible.

There are several works on swing options including \cite{diner,carday,ew,jrt,pfbr,yeo} to quote a few. This outstanding feature
of the embedded options is reminiscent of American contingent claims with multiple exercises. This point of view has been well documented in the paper by Carmona-Touzi \cite{carto}, in the case when maturity is finite or infinite, aiming mainly to find optimal times of interventions 
$(\t^*_i)_{i=1,N_c}$. In \cite{carto}, the authors assume a refraction period which is a condition which looks like to our delay, i.e. $\t^*_i-\t^*_{i-1}\ge \Delta$ 
in order to 
prevent $(\t^*_i)_{i=1,N_c}$ from bunching
up. On the other hand, as mentionned previously, the authors assume that the dynamics of the price of the commodity is not affected by the exercises. Namely we are in the framework of a small investor. However one can think of that the option integrates prices by layers, in which case the effective price depends on the amount of the commend.  For example, the option can stipulate that the price increases with the amount of the commend. If moreover there is a refraction period and the amount of the commend is made when it is executed, then pricing this option falls exactly in the model we study in this paper. 

The novelty of this paper is to consider the impulse control problem for general stochastic processes in the case when there is a fixed delay $\d$ in the interventions and the amplitudes of the impulses are decided at the moment when those latter occur and not when the decisions to impulse are made. We study the cases of finite and infinite horizons and risk-neutral and risk-sensitive utility functions.  Note that, in \cite{Bayraktar} the authors deal with the so-called Markovian setting, by using deterministic methods, since between two impulses the  dynamics of the controlled system is solution of a standard Markovian stochastic differential equation. On the other hand, the model we consider differs from the one treated in \cite{bhhz} because in this latter the sizes $\xi_n$ of the impulses are decided at $\t_n$, when the decision to make an  impulse is made. 

The rest of the paper is organized as follows. 
In section 2, we consider the problem of stochastic impulse control with finite horizon $T$ and fixed delay $\d$ when the sizes of the impulses are fixed at   the execution times,  in both the risk-neutral and risk-sensitive  settings.   The delay $\d$ makes that it is only possible to implement  
$\mathfrak{X}=[\frac{T}{\Delta}]$ impulses at most before the horizon $T$. In both cases we prove the existence of an optimal strategy which we exhibit. Mainly we use reflected BSDEs (resp. Snell envelope of processes) to deal with the risk-neutral (resp. risk-sensitive) framework.  In Section 3, we consider the infinite horizon setting of the problem in the risk-neutral case. Once more, by using reflected BSDEs in infinite horizon, we show the existence of an optimal strategy which we exhibit. The risk-sensitive case can be deduced in a similar way (see Remark \ref{rmkrs}). Finally, at the end of the paper we gather, as an Appendix, some results, on the projections of stochastic processes and on the Snell envelope of processes, to make as much as possible the paper self-contained.  

\setcounter{equation}{0} \setcounter{Assumption}{0}
\setcounter{Theorem}{0} \setcounter{Proposition}{0}
\setcounter{Corollary}{0} \setcounter{Lemma}{0}
\setcounter{Definition}{0} \setcounter{Remark}{0}
\section{The finite horizon setting}
\subsection{Preliminaries and description of the model}
\setcounter{equation}{0} \setcounter{Assumption}{0}
\setcounter{Theorem}{0} \setcounter{Proposition}{0}
\setcounter{Corollary}{0} \setcounter{Lemma}{0}
\setcounter{Definition}{0} \setcounter{Remark}{0}

Let $T>0$ be a real number which is the horizon of the problem. Let $(\Omega,\mathcal{F},\P)$ be a complete probability space on which is defined a standard $d$-dimensional
Brownian motion $B=(B_{t})_{t\leq T}$. Let $(\Fc_{t}^{0}:= \sigma\{B_{s}, s\leq t\})_{t\leq T}$ be the natural filtration of $B$ and $(\mathcal{F}_{t})_{t\leq T}$ its completion with the $\P$-null sets of $\Fc$, therefore $(\mathcal{F}_{t})_{t\leq T}$ satisfies the usual conditions. 
 The Euclidean norm of an element $x\in\R^{n}$ is denoted by $|x|$. We also need the following notations ($p>1$ and $n\ge 1$):  
\begin{enumerate}
\item [$\bullet$] $\mathcal{P}$ is the $\sigma$-algebra on $[0,T]\times\Omega$
 of $\mathcal{F}_{t}$-progressively measurable processes;
\item [$\bullet$] $\mathcal{H}^{p,n}=\{(v_{t})_{t\leq T}:$ $\mathcal{P}$-measurable, $\R^{n}$-valued process s.t.
$\E[\int_{0}^{T}|v_{s}|^{p}\,ds]<\infty\}$;
\item [$\bullet$] $\mathcal{S}^{2}=\{Y,$ $\mathcal{P}$-measurable  c\`{a}dl\`{a}g (or rcll) process, such that $\E[\sup_{0\leq t\leq T}|Y_{t}|^{2}]<\infty\}$;

\item [$\bullet$] $\mathcal{S}_{c}^{2}$ (resp. 
$\mathcal{S}_{i}^{2}$) is the subset of 
$\mathcal{S}^{2}$ of continuous processes (resp. continuous non-decreasing 
processes $(k_{t})_{t\leq T}$ such that $k_0=0$);

\item [$\bullet$]$\mathcal{T}_{t_0}=\{\nu: (\mathcal{F}_{t})_{t\leq T}$-stopping time, such that $\P$-a.s, $t_0\leq \nu \leq
T\}$ $(t_0\le T)$.
\end{enumerate}
\label{sec3}
\setcounter{equation}{0} \setcounter{Assumption}{0}
\setcounter{Theorem}{0} \setcounter{Proposition}{0}
\setcounter{Corollary}{0} \setcounter{Lemma}{0}
\setcounter{Definition}{0} \setcounter{Remark}{0}
\def \ch {\mathcal{H}}
\def \d{\Delta}
We consider a system, whose dynamics is described by a stochastic process $(X_t)_{t\le T}$ of $ \ch^{2,\ell}$ ($\ell \ge 1$), which is subject to impulses which are implemented with some fixed delay $\d<T$. 
Precisely when a decision to impulse the system at $\tau$ is made, it will be executed with a fixed lag $\Delta$, i.e., at time $\tau+\d$. On the other hand, 
unlike the most of existing works, we assume that the size of the impulse is not determined at the   time of decision $\tau$ but at the execution one $\tau+\d$.  Consequently an impulse strategy $\delta:=(\tau_{n},\xi_{n})_{n\geq 1}$ of the controller consists of a pair of sequences $(\tau_{n})_{n\geq 1} \in \mathcal T_0$ and 
$(\xi_{n})_{n\geq 1}$ such that for any $n\ge 1$: 
\medskip
\def \nd{\noindent}
\def \ms{\medskip}

\noindent (a) $\min\{T,\t_n+\d\}\le \t_{n+1}\le T$.
\medskip
 
\noindent (b) $\,\xi_{n}$ are $\mathcal{F}_{(\tau_{n}+\Delta)\wedge T}$-measurable random variables taking values in  a finite subset \\$U=\{a_1,\ldots,a_p\}$ of $\R^{\ell}$. 
\medskip

\noindent The strategy $\delta=(\tau_{n},\xi_{n})_{n\geq 1}$ is called admissible  if it satisfies the conditions (a)-(b).
  \medskip
 
In this definition of admissible strategies, point (a) means that as far as the horizon of the problem is not reached, there is a delay $\d$ between two impulses while point (b) tells us that the size of the impulse is decided when it occurs. 
\vskip0.2cm

\begin{Remark}\lb{remadmstrat}The condition (a) above implies also that for any $n\ge 1$, $\t_n\le \t_{n+1}$. Actually it is enough to consider $n_0=\inf\{n\ge 0, \t_n+\d>T\}$ and to show the property for $n\le n_0$ and for $n\ge n_0+1$. \qed
\end{Remark}

The set of
admissible strategies will be denoted by $\mathcal{A}$ and for $m\geq 0$, $\A_m$ will be the set of admissible strategies $\delta = {(\tau_n,\xi_n)}_{n \geq 1}$ such that  for all $ n> m$,  $\tau_n = T$, i.e., they are strategies which allow for $m$ impulses at most.  For every $\delta \in \A_m$ or $\A$, when mentioned, we assume that $\t_0=0$ and { $\xi_{0}=0$}. Finally let us point out that in our model the values of $\xi_n$ are irrelevant when  $\t_n \ge T-\Delta$ (see the definition of the payoff in \eqref{cost}).
\medskip
\def \mx{\mbox}

If an admissible strategy or control policy $\delta=(\tau_{n},\xi_{n})_{n\geq 1}$ is implemented, then the dynamics $X^{\delta}$ of the controlled process is given by: 
\be \lb{xd}\forall t<T,\,\, X_{t}^{\delta} =X_{t}+\sum_{n\geq 1}\xi_{n}\1_{[\tau_{n}+\Delta \leq t]} \mx{ and } X_{T}^{\delta}=\lim_{s\nearrow T}X_{s}^{\delta}.\ee
This definition of the process $X^{\delta}$ means that the controller does not make an impulse at the terminal time $T$. 
The associated total expected reward  for the controller is given by:
\begin{equation}\lb{eqpayoff}
J(\delta):=\E[\Phi\{\int_{0}^{T}g(s,X_{s}^{\delta})\;ds-\sum_{n\geq
1}\psi(\xi_{n})\1_{[\tau_{n}< T-\Delta]}\}], 
\end{equation}
where $g(.)$ stands for the instantaneous reward function and $\psi(a)$ is the cost paid by the agent when exerting a control of magnitude $a$ which we assume non-negative. The function $\Phi$ is the utility function of the controller. In our frameworks, it will be first chosen as the identity (resp. exponential) function in Subsection \ref{neutral} (resp. \ref{risk}).

This work aims to find the optimal admissible strategy $\delta^{\ast}={(\tau_{n}^{\ast},\xi_{n}^{\ast})}_{n\geq 1}$ of the controller, i.e., which maximizes the  expected reward  (\ref{eqpayoff}) and then satisfies 
$$J(\delta^{\ast})=\Sup_{\delta\in \mathcal{A}}J(\delta).$$

Since every two consecutive interventions are separated by at least a fixed lag $\Delta$, then we have:
\begin{Proposition}\label{sup}
Let $\mathfrak{X}=[\frac{T}{\Delta}]$. We then have:
  \be \label{ega2}\Sup_{\delta\in \mathcal{A}}J(\delta)=\Sup_{\delta\in \mathcal{A}_{\mathfrak{X}}}J(\delta).\ee
\end{Proposition}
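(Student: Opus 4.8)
The plan is to establish the two inequalities separately. Since $\A_{\mathfrak{X}}\subseteq \A$, one has immediately $\Sup_{\delta\in \A_{\mathfrak{X}}}J(\delta)\le \Sup_{\delta\in \A}J(\delta)$, so the entire content lies in the reverse inequality. I would obtain it by associating to every $\delta\in\A$ a strategy $\delta'\in\A_{\mathfrak{X}}$ with $J(\delta')=J(\delta)$; taking suprema then yields $\Sup_{\delta\in \A}J(\delta)\le \Sup_{\delta\in \A_{\mathfrak{X}}}J(\delta)$ and hence \eqref{ega2}.

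First I would identify which impulses actually influence the payoff. Looking at \eqref{xd} and \eqref{eqpayoff}, the impulse $(\tau_n,\xi_n)$ enters the controlled dynamics on $[0,T)$ only through the factor $\1_{[\tau_n+\Delta\le t]}$ with $t<T$, and enters the running cost only through $\1_{[\tau_n<T-\Delta]}$; both vanish unless $\tau_n<T-\Delta$. I would call such impulses \emph{effective} and discard the rest as irrelevant, consistent with the observation that the value of $\xi_n$ does not matter once $\tau_n\ge T-\Delta$.

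Next I would bound, pathwise, the number of effective impulses. By Remark \ref{remadmstrat} the sequence $(\tau_n)_{n\ge1}$ is non-decreasing, so once some $\tau_m\ge T-\Delta$ all later times remain $\ge T-\Delta$; hence the effective indices form an initial segment $\{1,\dots,k\}$, where $k=k(\omega)$ is random. For an effective index one has $\tau_n+\Delta<T$, so condition (a) reduces to $\tau_{n+1}\ge \tau_n+\Delta$; iterating from $\tau_1\ge0$ gives $\tau_k\ge (k-1)\Delta$. Combined with $\tau_k<T-\Delta$ this forces $k\Delta<T$, i.e. $k\le \mathfrak{X}=[T/\Delta]$, a bound valid for every $\omega$.

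Finally I would set $\delta'=(\tau'_n,\xi'_n)_{n\ge1}$ by keeping $(\tau'_n,\xi'_n)=(\tau_n,\xi_n)$ for $n\le\mathfrak{X}$ and putting $\tau'_n=T$, $\xi'_n=a_1$ for $n>\mathfrak{X}$. Checking (a)--(b) is routine: the $\xi'_n$ are $U$-valued and $\mathcal F_{(\tau'_n+\Delta)\wedge T}$-measurable (inherited for $n\le\mathfrak{X}$, trivial for $n>\mathfrak{X}$), while (a) holds since $\min\{T,\tau_{\mathfrak{X}}+\Delta\}\le T$ and $\min\{T,T+\Delta\}=T$; thus $\delta'\in\A_{\mathfrak{X}}$. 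Because $k\le\mathfrak{X}$ a.s., every effective impulse of $\delta$ survives in $\delta'$, while the dropped ones carry $\1_{[\tau_n<T-\Delta]}=0=\1_{[\tau_n+\Delta\le t]}$ for $t<T$; I would then verify $X^{\delta'}\equiv X^{\delta}$ on $[0,T)$ (hence at $T$ via the limit in \eqref{xd}) and that the two cost sums coincide, giving $J(\delta')=J(\delta)$. The one delicate point is this counting step: the cutoff $k$ is random whereas the truncation index $\mathfrak{X}$ is deterministic, and it is precisely the uniform bound $k(\omega)\le \mathfrak{X}$ that allows a single deterministic truncation to work; the boundary arithmetic (strict inequalities, and the case $T/\Delta\in\Z$) must be handled with care to land on $[T/\Delta]$ and not $[T/\Delta]+1$.
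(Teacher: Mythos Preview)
Your proof is correct and follows the same idea as the paper's: the delay constraint forces at most $\mathfrak{X}$ effective impulses, so restricting to $\mathcal{A}_{\mathfrak{X}}$ loses nothing. The paper's version is much terser---it simply asserts that every $\delta\in\mathcal{A}$ ``obviously'' lies in $\mathcal{A}_{\mathfrak{X}}$---which glosses over the truncation step you make explicit (and which is actually needed, since a $\delta\in\mathcal{A}$ can have $\tau_n\in[T-\Delta,T)$ for some $n>\mathfrak{X}$ and hence fail the literal definition of $\mathcal{A}_{\mathfrak{X}}$; your construction of $\delta'$ is the honest fix).
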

\def \ca {\mathcal{A}}
\begin{proof}Actually $\Sup_{\delta\in \mathcal{A}}J(\delta)\ge \Sup_{\delta\in \mathcal{A}_\mathfrak{X}}J(\delta)$ since $\mathcal{A}_{\mathfrak{X}}\subset \mathcal{A}$. On the other hand for any $\delta \in \ca$, $\delta$ belongs obviously to $\ca_\mathfrak{X}$, which implies the reverse inequality and then the equality between the two hand-sides of \eqref{ega2}. \end{proof}
\medskip

Next on the functions $g$ and $\psi$ we assume: 
\noindent \begin{Assumption}\label{assumpt2}
${}$\\
i) The  mapping $(t,\omega,x)\in [0,T]\times\Omega\times  \R^{\ell} \mapsto g(t,\omega,x)\in \R$ is $\mathcal{P}\otimes \mathcal{B}(\R^{\ell})/\mathcal{B}(\R)$-measurable. 
Moreover there exists a non-negative process $(\gamma_t)_{t\le T}$ of $\mathcal{H}^{2,1}$ such that:$$\P-a.s. \mbox{ for any }(t,x)\in[0,T]\times\R^{\ell},\,|g(t,\omega,x)|\le \gamma_t(\omega).$$

\noindent ii) For any $a\in \R^{\ell}$, $\psi(a)\ge 0
.$\end{Assumption}
\subsection{The risk neutral impulse control problem} \label{neutral}
In this part, we consider the standard  risk neutral case of our problem i.e. $\Phi(x)=x$ in \eqref{eqpayoff}. The total expected reward associated with an admissible strategy $\delta=(\tau_{n},\xi_{n})_{n\geq1}$ is then given by:
\begin{eqnarray} \label{cost}
J(\delta)&:=&\E[\int_{0}^{T}g(s,X_{s}^{\delta})\;ds-\sum_{n\geq
1}\psi(\xi_{n})\1_{[\tau_{n}<T-\Delta]}]\\
&=& \E[\int_{0}^{(\tau_1+\d)\wedge T}g(s,X_{s})\;ds+ \sum_{n\geq
1} \{\int_{{(\tau_n+\d)\wedge T}}^{{(\tau_{n+1}+\d)\wedge T}}g(s,X_{s}+\xi_1+\ldots +\xi_{n})\;ds-\psi(\xi_{n})\1_{[\tau_{n}<T-\Delta]}\}] \nonumber.
\end{eqnarray}
We are going to show that an optimal admissible strategy exists which is exhibited indeed. 
\subsubsection{Construction of the optimal strategy. An iterative scheme via reflected BSDEs.}
\label{sec4}

 For any $a \in \R^{\ell}$, let $Y^0(a)$ be the process defined by: 
\def \lb{\label}
\begin{equation}\label{y0ec} \forall t\le T, \,\,
Y_t^0(a) = \E\big[\int_t^T g(s,X_s+a) ds|\F_t \big]=\E\big[\int_0^T g(s,X_s+a) ds|\F_t \big]-\int_0^t g(s,X_s+a) ds. \end{equation}
Note that since $g$ verifies Assumption \ref{assumpt2}-i), then by Doob's maximal inequality (see \cite{Revuz}, pp.54), the process $Y^0(a)$ belongs to $\ss$.

Next for any $n \geq 1$, let $(Y^{n}(a),K^{n}(a),Z^{n}(a))$ be the solution of the following reflected BSDE:
\def \mx{\mbox}
\begin{equation}
\label{oordn}
\left\{
  \begin{array}{ll}
    Y^{n}(a)\in \mathcal{S}_c^{2}, \,Z^{n}(a)\in \mathcal{H}^{2,d} \mx{ and } K^{n}(a)\in \mathcal{S}_{i}^{2}; \\
    Y_{t}^{n}(a)=\int_{t}^{T}g(s,X_{s}+a)     \,ds+K_{T}^{n}(a)-K_{t}^{n}(a)-\int_{t}^{T}Z_{s}^{n}(a)\,dB_{s},\,\,t\leq T;\\
      Y_{t}^{n}(a)  \geq  O_{t}^{n}(a),\,\forall t\le T \mbox{ and } \Int_{0}^{T}(Y_{t}^{n}(a)-O_{t}^{n}(a))\,dK_{t}^{n}(a)=0.
  \end{array}
\right.
\end{equation}
The barrier  $(O_{t}^{n}(a))_{t\le T}$, which depends on $Y^{n-1}(a)$, is given by: $\forall t\le T$,
 \begin{align*}
      O_{t}^{n}(a):&=\E\big[\Int_{t}^{(t+\Delta)\wedge T}g(s,X_{s}+a)\,ds|\mathcal{F}_{t}\big]
         +\1_{[t< T-\Delta]} \E[\max_{\beta\in U} \{-\psi(\beta)+Y_{t+\Delta}^{n-1}(a+\beta)\}|\mathcal{F}_{t}]\\
         &=\E[\underbrace{\int_{t}^{(t+\Delta)\wedge T}g(s,X_{s}+a)\,ds         +\1_{[t< T-\Delta]} \max_{\beta\in U} \{-\psi(\beta)+Y_{t+\Delta}^{n-1}(a+\beta)\}}_{{\cal L}^n_t(a)}|\mathcal{F}_{t}]
\end{align*}
 Let us notice that  the process $(O_{t}^{n}(a))_{t\le T}$ is the predictable projection of the measurable process 
$({\cal L}^n_t(a))_{t\le T}$ (see Appendix \ref{section-projection}). 
\def \ft{\forall t\le T, \,\,}
According to Theorem 1.3 in \cite{Hamadene1}, if 
$(O_{t}^{n}(a))_{t\le T}$ is rcll with positive jumps only and uniformly square integrable then the solution of the BSDE \eqref{oordn} exists and is unique and the first component $Y^n(a)$ of the solution has the following representation:  
\begin{equation}
\ft Y_{t}^{n}(a)=\esssup_{\tau \in
\mathcal{T}_t}\E[\int_{t}^{\tau}g(s,X_{s}+a)\,ds+  O_{\tau}^{n}(a)|\mathcal{F}_{t}].
\label{ordrenn}\end{equation}

\def \cf {\mathcal{F}}

\begin{Proposition} \lb{yksi} For any $n\ge 1$ and $a\in \rl$, the triple $(Y^{n}(a),Z^n({a}),K^{n}(a))$ is well-posed. 
\label{pro limit}
\end{Proposition}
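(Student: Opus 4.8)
The plan is to argue by induction on $n\ge 1$, the goal at each step being to check that the barrier $O^n(a)$ satisfies the hypotheses of Theorem 1.3 in \cite{Hamadene1}: rcll with positive jumps only and uniformly square integrable. Granting this, that theorem yields at once the existence and uniqueness of the triple $(Y^n(a),Z^n(a),K^n(a))$ solving \eqref{oordn}, the representation \eqref{ordrenn}, and the continuity $Y^n(a)\in\ss$. The induction hypothesis I carry is precisely this last property together with $Y^{n-1}_T(a)=0$, for every $a\in\rl$. For $n-1=0$ it holds by the remark after \eqref{y0ec}: $Y^0(a)$ is the continuous Brownian martingale $\E[\int_0^T g(s,X_s+a)\,ds\,|\,\F_\cdot]$ minus the continuous process $\int_0^\cdot g(s,X_s+a)\,ds$, and $Y^0_T(a)=0$.

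Throughout I write $h^{n-1}_s(a):=\max_{\b\in U}\{-\psi(\b)+Y^{n-1}_s(a+\b)\}$ and decompose the barrier as $O^n_t(a)=A_t+\1_{[t<T-\d]}B_t$, where $A_t:=\E[\int_t^{(t+\d)\wedge T}g(s,X_s+a)\,ds\,|\,\F_t]$ and $B_t:=\E[h^{n-1}_{t+\d}(a)\,|\,\F_t]$. Since $U$ is finite and each $Y^{n-1}(a+\b)\in\ss$, the process $h^{n-1}(a)$ is continuous and bounded by $H:=\max_{\b\in U}\psi(\b)+\sum_{\b\in U}\sup_{s\le T}|Y^{n-1}_s(a+\b)|$, which lies in $L^2$. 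Likewise Assumption \ref{assumpt2}-i) gives $|A_t|\le\E[\int_0^T\gamma_s\,ds\,|\,\F_t]$, and $\int_0^T\gamma_s\,ds\in L^2$ by Cauchy--Schwarz since $\gamma\in\mathcal{H}^{2,1}$. Doob's maximal inequality applied to the two dominating martingales then bounds $\E[\sup_{t\le T}|O^n_t(a)|^2]$, giving the required uniform square integrability.

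The heart of the matter is the path regularity of the delayed conditional expectations $A$ and $B$, and here I would exploit that the augmented Brownian filtration is continuous, i.e. $\F_{t^-}=\F_t=\F_{t^+}$. For $t_k\to t$ I would split $B_{t_k}-B_t$ as $\E[h^{n-1}_{t_k+\d}(a)-h^{n-1}_{t+\d}(a)\,|\,\F_{t_k}]+\big(\E[h^{n-1}_{t+\d}(a)\,|\,\F_{t_k}]-\E[h^{n-1}_{t+\d}(a)\,|\,\F_t]\big)$; the first term tends to $0$ by path-continuity of $h^{n-1}(a)$ and domination by $H$, the second by the upward and downward martingale convergence theorems along the continuous filtration. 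The same argument, applied to $G_t:=\int_0^t g(s,X_s+a)\,ds$, shows $A$ is continuous. This is exactly the point at which the projection results of Appendix \ref{section-projection} are used: they identify $O^n(a)$ with the predictable (equivalently, the filtration being continuous, optional) projection of $\mathcal{L}^n(a)$ and endow it with an rcll version. It follows that the sole possible discontinuity of $O^n(a)$ is carried by the indicator $\1_{[t<T-\d]}$, hence can occur only at $t=T-\d$.

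It remains to sign that jump. As $A$ and $B$ are continuous, $O^n_{T-\d}(a)=A_{T-\d}$ whereas $O^n_{(T-\d)^-}(a)=A_{T-\d}+B_{T-\d}$, so the jump equals $-B_{T-\d}=-\E[h^{n-1}_T(a)\,|\,\F_{T-\d}]$. By the induction hypothesis $Y^{n-1}_T(a+\b)=0$, whence $h^{n-1}_T(a)=-\min_{\b\in U}\psi(\b)\le 0$ by Assumption \ref{assumpt2}-ii); the jump therefore equals $\min_{\b\in U}\psi(\b)\ge 0$ and is positive. Thus $O^n(a)$ is rcll with a single positive jump and uniformly square integrable, so Theorem 1.3 in \cite{Hamadene1} applies and produces the well-posed triple $(Y^n(a),Z^n(a),K^n(a))$ with $Y^n(a)\in\ss$ and $Y^n_T(a)=0$, closing the induction. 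The only genuine obstacle is the continuity of the delayed projections $A$ and $B$, for which the continuity of the Brownian filtration is essential.
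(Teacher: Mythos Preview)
Your proposal is correct and follows essentially the same route as the paper: induction on $n$, continuity of the two pieces $A_t$ and $B_t$ via the predictable-projection machinery in the Brownian filtration (your martingale-convergence sketch is exactly what the Appendix results encode), isolation of the single jump at $T-\Delta$ and its non-negative sign from $Y^{n-1}_T(\cdot)=0$ and $\psi\ge 0$, and then invocation of Theorem~1.3 in \cite{Hamadene1}. The only cosmetic slip is calling the jump ``positive'' when $\min_{\b\in U}\psi(\b)$ may vanish; ``non-negative'' is what is needed and what the cited theorem requires.
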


\begin{proof} 
We proceed by induction. First note that, as mentionned previously, for any $a\in \rl$, the process $Y^0(a)$ belongs to $\ss$. Now recall that for any $t\le T$,
 \begin{eqnarray*}
      O_{t}^{1}(a):=\E\big[\Int_{t}^{(t+\Delta)\wedge T}g(s,X_{s}+a)\,ds|\mathcal{F}_{t}\big]
         +\1_{[t< T-\Delta]} \E[\max_{\beta\in U} \{-\psi(\beta)+Y_{t+\Delta}^{0}(a+\beta)\}|\mathcal{F}_{t}].
\end{eqnarray*}
Therefore this process is obviously uniformly $d\,\P$-square integrable. Next the processes\\
$(\E\big[\Int_{t}^{(t+\Delta)\wedge T}g(s,X_{s}+a)\,ds|\mathcal{F}_{t}\big])_{t\le T}$ and $
       (\E[\max_{\beta\in U} \{-\psi(\beta)+Y_{t+\Delta}^{0}(a+\beta)\}|\mathcal{F}_{t}])_{t\le T}$ are continuous as they are the predictable projections of continuous processes respectively since the filtration $(\cf_t)_{t\le T}$ is Brownian. Consequently the process 
       $(O_{t}^{1}(a))_{t\le T}$ is uniformly $d\P$-square integrable and continuous on $[0,T]-\{T-\d\}.$ So let us analyze the discontinuity of 
       $(O_{t}^{1}(a))_{t\le T}$ in $T-\d$. Actually 
$$
\lim_{t\searrow T-\d }O_{t}^{1}(a)=\E\big[\Int_{T-\d}^{ T}g(s,X_{s}+a)\,ds|\mathcal{F}_{T-\d}\big]$$
and 
$$
\lim_{t\nearrow T-\d }O_{t}^{1}(a)=\E\big[\Int_{T-\d}^{ T}g(s,X_{s}+a)\,ds|\mathcal{F}_{T-\d}\big]+\max_{\beta\in U} (-\psi(\beta))$$
since $Y_{T}^{0}(a+\beta)=0$ and the filtration $(\cf_t)_{t\le T}$ is Brownian  (see e.g \cite{Dellacherie} VI, Theorem 90). As the function $\psi\ge 0$ then the jump of
$(O_{t}^{1}(a))_{t\le T}$ at $T-\d$ is non negative which implies,   by Theorem 1.3 in \cite{Hamadene1}, that the processes $(Y^1(a),Z^1(a),K^1(a))$ which verify \eqref{oordn} exist. In particular the processes 
$Y^1(a)$ and $K^1(a)$ are continuous. Next, in the same way, if 
for some $n\ge 1$, for any $a\in \rl$, the triple 
$(Y^n(a),Z^n(a),K^n(a))$ which satisfies \eqref{oordn} exists then for any $a\in \rl$, 
the triple $(Y^{n+1}(a),Z^{n+1}(a),K^{n+1}(a))$ 
satisfying \eqref{oordn} exits. This completes the proof of the well-posedness of the processes $(Y^n(a),Z^n(a),K^n(a))$ solution of \eqref{oordn}. Finally let us notice that the representation \eqref{ordrenn} holds true for $Y^n(a)$.
\end{proof} 
\def \g{\gamma}
\begin{Proposition}\lb{restric}For any $n\ge 0$ and $a\in \rl$, we have: $$
\forall t\in [T-\d,T],\,\,Y^n_t(a)=\E[\int_t^Tg(s,X_s+a)ds|\cf_t]=O^n_t(a).
$$    
\end{Proposition}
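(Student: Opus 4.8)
The plan is to reduce everything, on the interval $[T-\d,T]$, to a single conditional-expectation process and then to exploit the representation \eqref{ordrenn}. To this end, set $M_t:=\E[\int_t^Tg(s,X_s+a)\,ds\,|\,\F_t]$ for $t\le T$. Since the filtration is Brownian, $M$ is continuous, being of the form $M_t=N_t-\int_0^tg(s,X_s+a)\,ds$ with $N_t:=\E[\int_0^Tg(s,X_s+a)\,ds\,|\,\F_t]$ a continuous (square-integrable, by Assumption \ref{assumpt2}) martingale.

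For $n=0$ the identity $Y^0_t(a)=M_t$ is exactly the definition \eqref{y0ec}. For $n\ge 1$ I would first inspect the barrier on $[T-\d,T]$: when $t\ge T-\d$ one has $\1_{[t<T-\d]}=0$ and $(t+\d)\wedge T=T$, so the defining formula for $O^n_t(a)$ collapses to $O^n_t(a)=M_t$. Note that this degeneracy is independent of $n$, the $Y^{n-1}$-term being killed by the indicator; hence no induction on $n$ is required, and the second equality $M_t=O^n_t(a)$ is obtained at once.

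It then remains to identify $Y^n_t(a)$ with $M_t$ for $t\in[T-\d,T]$ and $n\ge 1$, for which I would plug $O^n$ into \eqref{ordrenn}. Any $\tau\in\mathcal{T}_t$ satisfies $\tau\ge t\ge T-\d$, so the same computation applies at the stopping time $\tau$: from $M_\tau=N_\tau-\int_0^\tau g(s,X_s+a)\,ds=\E[\int_\tau^Tg(s,X_s+a)\,ds\,|\,\F_\tau]$ (optional sampling for the continuous martingale $N$, together with the $\F_\tau$-measurability of $\int_0^\tau g\,ds$) one gets $O^n_\tau(a)=M_\tau$. Consequently, by the tower property, $\E[\int_t^\tau g(s,X_s+a)\,ds+O^n_\tau(a)\,|\,\F_t]=\E[\int_t^Tg(s,X_s+a)\,ds\,|\,\F_t]=M_t$ for every $\tau\in\mathcal{T}_t$. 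The random variable inside the essential supremum in \eqref{ordrenn} is thus the same $\F_t$-measurable quantity $M_t$ for all $\tau$, so the supremum is trivially attained and $Y^n_t(a)=M_t$, which is the desired first equality.

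The only genuinely delicate point is the passage from the deterministic-time formula for $O^n$ to its value at the stopping time $\tau$. I would sidestep the projection technicalities by representing $O^n$ on $[T-\d,T]$ through the continuous process $M$ and by invoking optional sampling for the continuous martingale $N$; it is precisely the continuity furnished by the Brownian filtration that makes the identity $O^n_\tau(a)=M_\tau$ unambiguous, even though $M$ (hence $O^n$) is evaluated at a random time.
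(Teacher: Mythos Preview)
Your argument is correct and proceeds along a genuinely different route from the paper's. You exploit the Snell-envelope representation \eqref{ordrenn}: once $O^n$ is identified with $M$ on $[T-\d,T]$ (a pathwise identification, legitimate because both processes are continuous there), the integrand $\int_t^\tau g\,ds+O^n_\tau(a)$ telescopes via optional sampling to $\int_t^T g\,ds$, so the essential supremum degenerates to $M_t$. The paper instead restricts the reflected BSDE \eqref{oordn} to $[T-\d,T]$, exhibits $(M,\tilde Z,0)$ as a solution, and invokes uniqueness. Your approach is more elementary in that it bypasses the BSDE machinery; the paper's buys the by-product $dK^n_t(a)=0$ on $[T-\d,T]$ (Remark \ref{kna}) for free as part of the uniqueness statement, whereas from your conclusion $Y^n=M$ one must still subtract the two representations and take expectations to recover this. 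You are also right to note that no induction on $n$ is needed---the $Y^{n-1}$-term in the barrier is annihilated by the indicator---and indeed the paper's proof, despite its inductive phrasing, does not actually use the hypothesis at level $n-1$ either.
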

\begin{proof}If $n=0$, the equality holds true by definition of $Y^0(a)$. So  let us assume that for some $n\ge 1$, for any $a\in \rl$, the relation holds true. By its definition in \eqref{oordn}, the triple $(Y^{n}(a),Z^n({a}),K^{n}(a))$ verifies on $[T-\d, T]$:
\def \mx{\mbox}
\begin{equation}
\label{oordn2}
\left\{
  \begin{array}{ll}
        Y_{t}^{n}(a)=\int_{t}^{T}g(s,X_{s}+a)     \,ds+K_{T}^{n}(a)-K_{t}^{n}(a)-\int_{t}^{T}Z_{s}^{n}(a)\,dB_{s},\,\,T-\d\le t\leq T;\\
      Y_{t}^{n}(a)  \geq  O_{t}^{n}(a) \mbox{ for $t\in [T-\d, T]$ and } \Int_{T-\d}^{T}(Y_{t}^{n}(a)-O_{t}^{n}(a))\,dK_{t}^{n}(a)=0.
  \end{array}
\right.
\end{equation}
But for any $t\in [T-\d, T]$, 
$$O_{t}^{n}(a)=\E[\int_t^Tg(s,X_s+a)ds|\cf_t].$$
Next, once more for $t\in [T-\d, T]$,
let us set:
$$\tilde Y^n_t(a)=\E[\int_t^Tg(s,X_s+a)ds|\cf_t]. 
$$
Therefore, by the representation property of square integrable Brownian martingales, there exists a $\cp$-measurable process $(\tilde Z^n_t(a))_{T-\d\le t\le T}$  which is $dt\otimes d\P$-square integrable on $[T-\d,T]\times \Omega$ such that for any $t\in [T-\d, T]$,$$
\tilde Y^n_t(a)=\int_t^Tg(s,X_s+a)ds-\int_t^T\tilde Z^n_s(a)dB_s. 
$$It means that $(\tilde Y^n_t(a),\tilde Z^n_t(a),0)_{T-\d\le t\le T}$ is  also a solution for the reflected BSDE \eqref{oordn2}. But the solution of this latter is unique, therefore for any $t\in [T-\d,T]$, 
$$\tilde Y^n_t(a)=\E[\int_t^Tg(s,X_s+a)ds|\cf_t]=
Y^n_t(a)=O^n_t(a)
$$which is the desired result. \end{proof}
\begin{Remark}\lb{kna} By the uniqueness of the solution of \eqref{oordn2}, for any $n\ge 1$ and any $a\in \R^\ell$, it holds that $dK^n_t(a)=0$, for any $t\in[T-\Delta,T]$.
\end{Remark}
\def \cu {\mathcal{U}}
Next recall that $\mathfrak{X}=[\frac{T}{\d}]$ is the maximal amount of impulses that the controller is allowed to make due to the delay $\d$ and finite horizon $T$. So for $k=1,...,\mathfrak{X}$, let us define the finite set $\cu_k$ as follows:
$$
\cu_k:=\{a=\sum_{j=1}^{k}a_j, \mx{ where }(a_1,...,a_k)\in U^k\}.
$$\def \cv {\mathcal{V}}
For a fixed $k$, the $k$-tuple $(a_1,...,a_k)$ stands for the 
impulses implemented by the controller while $\sum_{j=1}^{k} a_j$ is the sum of the cumulative impulses up to the $k$-th one.  
\ms

 {Next let $\cv=\bigcup_{k=1,\mathfrak{X}} \cu_k$, $\t$ be a stopping time and $\xi$ an $\cf_\t$-random variable which takes its values in the finite subset $\mathcal{V}$ of $\R^{\ell}$. We set, for any $t\ge \t$,
\begin{equation}\label{yxi}
    Y_t^n (\xi) := \sum_{a \in \cv} Y_t^n (a) \1_{[\xi =a]}, Z_t^n (\xi) := \sum_{a \in \cv} Z_t^n (a) \1_{[\xi =a]},K_t^n (\xi) := \sum_{a \in \cv} K_t^n (a) \1_{[\xi =a]},  
\end{equation}
and finally
\def \x {\xi}
\begin{equation}\label{oxi}
    O_t^n (\xi) := \sum_{a \in \cv} O_t^n (a) \1_{[\xi =a]}.
\end{equation}
From the definition \eqref{oordn}, the triple $(Y_t^n (\xi), Z_t^n(\xi),K_t^n(\xi))_{t\in [\t,T]}$ verifies: $\P$-a.s. for any $\t~\le~t\le~T$, 
\begin{equation}
\label{oordn22}
\left\{
  \begin{array}{ll}
   \E[\sup_{t\in [\t,T]} |Y^{n}_t(\xi)|^2+K_T^n(\xi)^2]+
   \E[\int_\t^T|Z_t^n(\xi)|^2]dt]<\infty; \\   
       Y_{t}^{n}(\x)=\int_{t}^{T}g(s,X_{s}+\x)     \,ds+K_{T}^{n}(\x)-K_{t}^{n}(\x)-\int_{t}^{T}Z_{s}^{n}(\x)\,dB_{s};\\
      Y_{t}^{n}(\xi)  \geq  O_{t}^{n}(\xi) \mbox{ and } \Int_{\t}^{T}(Y_{t}^{n}(\x)-O_{t}^{n}(\x))\,dK_{t}^{n}(\x)=0.
  \end{array}
\right.
\end{equation}
\begin{Remark}\lb{aptd} As in Proposition \ref{restric}, 
for any $t\in [(T-\Delta)\vee \t, T]$,
$
Y_{t}^n({\x})=O_{t}^n({\x})=\E[\int_t^Tg(s,X_s+\xi)ds|\cf_t]$ and $dK^n_t(\xi)=0$.
\end{Remark}
\def \ci {\mathcal{I}}

\subsubsection{The optimal strategy}
\label{sec5}
We are going now to show that an optimal strategy exists. It is constructed with the help of processes $Y^n(a)$, $a\in \rl$, for some appropriate $n$. 
\ms

We first show:
\ms

\begin{Proposition}\label{optimal strategy}
For every $n \in \N^*$, there exists a strategy $\delta^n$ in $\mathcal{A}_{n}$ which satisfies:  
$$\sup_{\delta\in
\mathcal{A}_{n}}J(\delta)=J(\delta^{n})=Y_{0}^{n}(0).$$Thus $\delta^n$ is optimal for the impulse control problem when only $n$ impulses at most are allowed.  \label{approximation}
\end{Proposition}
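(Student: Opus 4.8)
The plan is to prove the statement by induction on $n$, reading $Y^n_t(a)$ as the optimal expected future reward at time $t$ when the cumulative impulse already applied to $X$ equals $a$ and at most $n$ further impulses are permitted. The base case $n=0$ is immediate: the only strategy in $\mathcal{A}_0$ keeps $X^\delta=X$, so $J(\delta)=\E[\int_0^T g(s,X_s)\,ds]=Y^0_0(0)$ by \eqref{y0ec}. To make the induction close I would actually establish the slightly stronger statement in which the problem is started at an arbitrary stopping time $\sigma$ with an $\F_\sigma$-measurable cumulative shift $\xi\in\mathcal{V}\cup\{0\}$: namely that $Y^n_\sigma(\xi)$ equals the essential supremum, over admissible strategies launched at $\sigma$ with at most $n$ impulses and initial shift $\xi$, of the conditional reward, and that this supremum is attained. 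The reason for the stronger form is precisely the delay: after the first intervention the control problem restarts at the execution time $\tau_1+\Delta$ with a new, random cumulative shift, so the induction hypothesis must be available in this conditional, shifted form. The random-shift processes $Y^m(\xi)$, $O^m(\xi)$ defined in \eqref{yxi}--\eqref{oxi} and satisfying \eqref{oordn22} are exactly the bookkeeping device that allows this.

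\emph{Construction of $\delta^n$.} I would build the candidate recursively through the hitting times of the obstacle. Set $\tau^n_1:=\inf\{t\ge 0: Y^n_t(0)=O^n_t(0)\}\wedge T$, which is the optimal stopping time in the representation \eqref{ordrenn} for $Y^n_0(0)$. If $\tau^n_1\ge T-\Delta$ no effective impulse occurs and the construction terminates (the remaining $\tau_k$ are set to $T$); otherwise, at the execution time $\tau^n_1+\Delta$ choose $\xi^n_1\in\argmax_{\beta\in U}\{-\psi(\beta)+Y^{n-1}_{\tau^n_1+\Delta}(\beta)\}$, which is $\F_{(\tau^n_1+\Delta)\wedge T}$-measurable by measurable selection and takes values in $U$. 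Writing $A_k:=\xi^n_1+\cdots+\xi^n_k\in\mathcal{V}$, I continue with $\tau^n_{k+1}:=\inf\{t\ge \tau^n_k+\Delta: Y^{n-k}_t(A_k)=O^{n-k}_t(A_k)\}\wedge T$ and $\xi^n_{k+1}\in\argmax_{\beta\in U}\{-\psi(\beta)+Y^{n-k-1}_{\tau^n_{k+1}+\Delta}(A_k+\beta)\}$, stopping after at most $n$ steps. Condition (a) holds because $\tau^n_{k+1}\ge \tau^n_k+\Delta$ by construction, condition (b) holds by the measurability of the argmax, and the total number of impulses is at most $n$, so $\delta^n\in\mathcal{A}_n$.

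\emph{The two inequalities.} For the identity $J(\delta^n)=Y^n_0(0)$ I would argue that, along $\delta^n$, the reflected-BSDE dynamics collapse to a telescoping chain: on each interval $[\tau^n_k+\Delta,\tau^n_{k+1}]$ the obstacle is inactive, so $dK^{n-k}=0$ and $Y^{n-k}(A_k)$ is a martingale plus the running integral of $g$; at $\tau^n_{k+1}$ one has $Y^{n-k}=O^{n-k}$; and at the execution time the argmax selection turns the maximum inside the obstacle into the exact value $-\psi(\xi^n_{k+1})+Y^{n-k-1}_{\tau^n_{k+1}+\Delta}(A_{k+1})$. Taking conditional expectations and using the tower property, these pieces glue into $Y^n_0(0)=\E[\int_0^T g(s,X^{\delta^n}_s)\,ds-\sum_k\psi(\xi^n_k)\1_{[\tau^n_k<T-\Delta]}]=J(\delta^n)$, the terminal layer being handled by Proposition \ref{restric} and Remark \ref{aptd}. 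For the upper bound $J(\delta)\le Y^n_0(0)$ for arbitrary $\delta\in\mathcal{A}_n$, I would run the same decomposition with the agent's (suboptimal) data: since $\tau_1$ is merely admissible, \eqref{ordrenn} gives $Y^n_0(0)\ge \E[\int_0^{\tau_1}g(s,X_s)\,ds+O^n_{\tau_1}(0)]$, and since $\xi_1$ is only one competitor in the maximum defining $O^n$, $\max_{\beta\in U}\{-\psi(\beta)+Y^{n-1}_{\tau_1+\Delta}(\beta)\}\ge -\psi(\xi_1)+Y^{n-1}_{\tau_1+\Delta}(\xi_1)$; applying the induction hypothesis to $Y^{n-1}_{\tau_1+\Delta}(\xi_1)$ and summing the layers yields $Y^n_0(0)\ge J(\delta)$.

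The main obstacle I expect is not any single estimate but the careful gluing across the delay: one must verify that the conditional-expectation decomposition of the reward in \eqref{cost} matches, layer by layer, the reflected-BSDE identities for the random-shift processes $Y^{n-k}(A_k)$, paying attention to the indicator $\1_{[\tau_k<T-\Delta]}$ and to the boundary stratum near $T-\Delta$, where Proposition \ref{restric} and Remark \ref{aptd} guarantee that the obstacle is attained and $K$ is constant, so that no spurious reward or cost is created. A secondary point requiring care is the measurable selection of $\xi^n_k$ at the execution time and the verification that $Y^{n-k}(A_k)$, built from the deterministic-shift processes via \eqref{yxi}, indeed solves \eqref{oordn22} with the correct obstacle $O^{n-k}(A_k)$.
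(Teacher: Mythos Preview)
Your proposal is correct and follows essentially the same route as the paper: the same hitting-time construction of $\delta^n$, the same argmax selection of $\xi^n_k$ at the execution time, the same telescoping via the reflected-BSDE identities for the equality $J(\delta^n)=Y^n_0(0)$, and the same chain of inequalities for $J(\delta)\le Y^n_0(0)$. The only structural difference is that you package the argument as a formal induction on $n$ (strengthened to a conditional statement at a generic stopping time $\sigma$ with shift $\xi$), whereas the paper unrolls the $n$ layers by hand in a single computation; your version is slightly cleaner to state, the paper's avoids having to formulate the conditional problem precisely, but the substance is identical.

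One point where the paper is more explicit than your sketch: the handling of the boundary stratum uses the sets $D^n_k=\{\tau^n_k\le T-2\Delta\}$ to decide whether to continue the construction or to set the remaining $\tau^n_j$ equal to $T$, and the partition $\{\overline{D^n_1},\,D^n_{n-1},\,D^n_i\cap\overline{D^n_{i+1}}\}$ is used to reassemble $Y^n_0(0)$ into $J(\delta^n)$. You anticipate this (``paying attention to the indicator $\1_{[\tau_k<T-\Delta]}$ and to the boundary stratum near $T-\Delta$''), and Proposition~\ref{restric} together with Remark~\ref{aptd} indeed supplies what is needed; just be aware that this bookkeeping is the place where a careless write-up can go wrong, and the paper spends most of Step~1 on it.
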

\begin{proof} The proof will be divided into two steps.
\medskip

\noindent \textbf{Step 1:} Let us show that $Y_{0}^{n}(0)=J(\delta^n)$ where $\delta^n$ will be exhibited later on, 
as we progress through the proof. First recall that 
the triple $(Y^n(0),Z^n(0),K^n(0))$ verifies:
\begin{equation}
\label{oordn1}
\left\{
  \begin{array}{ll}
    Y^{n}(0)\in \mathcal{S}_c^{2}, \,Z^{n}(0)\in \mathcal{H}^{2,d} \mx{ and } K^{n}(0)\in \mathcal{S}_{i}^{2}; \\
    Y_{t}^{n}(0)=\int_{t}^{T}g(s,X_{s})     \,ds+K_{T}^{n}(0)-K_{t}^{n}(0)-\int_{t}^{T}Z_{s}^{n}(0)\,dB_{s},\,\,t\leq T;\\
      Y_{t}^{n}(0)  \geq  O_{t}^{n}(0), \,t\le T \mbox{ and } \Int_{0}^{T}(Y_{t}^{n}(0)-O_{t}^{n}(0))\,dK_{t}^{n}(0)=0.
  \end{array}
\right.
\end{equation}
Next let $\t^n_1$ be the following stopping time:
$$
\tau_1^n = \inf\{s \geq 0,\, Y_s^n(0) = O_s^n(0)\}.
$$
By Proposition \ref{restric}, $\P-a.s.$, $\t_1^n\le T-\d$. On the other hand since the process $(O_t^{n}(0))_{t\le T}$ is continuous on $[0,T]-\{T-\Delta\}$ and cannot have a negative jump at $T-\Delta$, then we have: 
\begin{eqnarray}
        Y_{0}^{n}(0)&=&\E[\Int_{0}^{\tau_{1}^{n}} g(s,X_{s}) \,ds + Y^n_{\tau_1^{n}}(0)]  \\
    &=& \E\big[\Int_{0}^{\tau_{1}^{n}} g(s,X_{s}) \,ds \nonumber +\1_{[\tau_1^n < T-\Delta]}O^n_{\tau_1^{n}}(0)+\1_{[\tau_1^n= T-\Delta]} \E[\int_{\tau_1^{n}}^T g(s,X_s) ds|\F_{\tau_1^n}]\big].
    \end{eqnarray}\label{y0n}
But \begin{eqnarray*}
O^n_{\tau_1^{n}}(0)=\E\big[\Int_{\tau_1^{n}}^{(\tau_1^{n}+\Delta)\wedge T}g(s,X_{s})\,ds|\mathcal{F}_{\tau_1^{n}}\big]
         +\1_{[\tau_1^{n}< T-\Delta]} \E[\max_{\beta\in U} \{-\psi(\beta)+Y_{\tau_1^{n}+\Delta}^{n-1}(\beta)\}|\mathcal{F}_{\tau_1^{n}}].
\end{eqnarray*}
\noindent So for $l\in \ci=\{1,\ldots,p\}$, let $
\Gamma^{1,n}_l$ be the following set:

\begin{equation*}
\Gamma^{1,n}_l= \bigcap_{ 1\leq j \leq p,\, j\neq l}\{-\psi(a_l)+Y _{\tau^n_{1}+\Delta}^{n-1}(a_l) \geq -\psi(a_j)+Y _{\tau^n_{1}+\Delta}^{n-1}(a_j)\}.
\end{equation*}
\def \txt{\text}
\def \tbar{\tilde}\\
Next we define $\tilde{\Gamma}^{1,n}_l$, $l\in {\cal I}$, by: 
\begin{equation}\tbar{\Gamma}^{1,n}_1 =\Gamma^{1,n}_1 \txt{ and for }l\ge 2, \tbar{\Gamma}^{1,n}_l=\Gamma^{1,n}_l\setminus [\Gamma^{1,n}_1 \cup \ldots \Gamma^{1,n}_{l-1}].\lb{ensgamma}\end{equation}
Those sets $\tbar{\Gamma}^{1,n}_l$, $l\in \ci$, belong to $\F_{\tau_1^n + \Delta}$, are pairwise disjoint and on 
$\tbar{\Gamma}^{1,n}_l $, if non empty, $a_l\in U$ realizes the maximum of 
$$\max_{a\in U}(-\psi(a)+Y _{\tau^n_{1}+\Delta}^{n-1}(a)).$$
Finally let us set 
\begin{equation}\label{beta1}
    \beta_1^n = \sum_{1\leq l \leq p} a_l \1_{\tbar{\Gamma}^{1,n}_l}.
\end{equation}
The random variable $\beta^n_1$ is $\F_{\tau_1^n + \Delta}-$measurable and verifies:
$$\max_{a\in U}(-\psi(a)+Y _{\tau^n_{1}+\Delta}^{n-1}(a))=
-\psi(\beta^n_1)+Y _{\tau^n_{1}+\Delta}^{n-1}(\beta^n_1).$$
Therefore 
\begin{eqnarray*}
O^n_{\tau_1^{n}}(0)=\E\big[\Int_{\tau_1^{n}}^{(\tau_1^{n}+\Delta)\wedge T}g(s,X_{s})\,ds|\mathcal{F}_{\tau_1^{n}}\big]
         +\1_{[\tau_1^{n}< T-\Delta]} \E[(-\psi(\beta^n_1)+Y _{\tau^n_{1}+\Delta}^{n-1}(\beta^n_1))|\mathcal{F}_{\tau_1^{n}}]
\end{eqnarray*}
and then
\def \b {\beta}
\def \bn {\beta^n}
\begin{eqnarray}\label{y0n2}
    Y_{0}^{n}(0)
    &=& \E\{\Int_{0}^{\tau_{1}^{n}} g(s,X_{s}) \,ds \nonumber +\1_{[\tau_1^n < T-\Delta]}\E\big[\Int_{\tau_1^{n}}^{(\tau_1^{n}+\Delta)\wedge T}g(s,X_{s})\,ds|\mathcal{F}_{\tau_1^{n}}\big]
       \nonumber  \\&{}&\, +\1_{[\tau_1^{n}< T-\Delta]} \E[(-\psi(\beta^n_1)+Y _{\tau^n_{1}+\Delta}^{n-1}(\beta^n_1))|\mathcal{F}_{\tau_1^{n}}]+\1_{[\tau_1^n= T-\Delta]} \E[\int_{\tau_{1}^{n}}^T g(s,X_s) ds|\F_{\tau_1^n}]\} \nonumber\\
          &=& \E\{\Int_{0}^{\tau_{1}^{n}+\d} g(s,X_{s}) \,ds  +\1_{[\tau_1^{n}< T-\Delta]} (-\psi(\beta^n_1))+Y _{\tau^n_{1}+\Delta}^{n-1}(\beta^n_1)\}
    \end{eqnarray}
since $Y _{\tau^n_{1}+\Delta}^{n-1}(\b^n_1)=\1_{[\tau_1^{n}< T-\Delta]}.Y _{\tau^n_{1}+\Delta}^{n-1}(\bn_1)$ because 
$Y _T^{n-1}(\b^n_1)=0$, $\P$-a.s. This is for the first step. 

Next we deal with the quantity $Y _{\tau^n_{1}+\Delta}^{n-1}(\beta^n_1)$. First, let us point out that by \eqref{oordn22}, the triple 
$(Y_t^{n-1} (\b^n_1), Z_t^{n-1}(\b^n_1),K_t^{n-1}(\b^n_1))_{t\in [\tau^n_{1}+\Delta,T]}$ verifies: $\P$-a.s. for any $\tau^n_{1}+\Delta\le t\le T$, 
\begin{equation}
\label{oordn21}
\left\{
  \begin{array}{ll}
   Y_{t}^{n-1}(\b^n_1)=\int_{t}^{T}g(s,X_{s}+\b^n_1)     \,ds+K_{T}^{n-1}(\b^n_1)-K_{t}^{n-1}(\b^n_1)-\int_{t}^{T}Z_{s}^{n-1}(\b^n_1)\,dB_{s};\\
      Y_{t}^{n-1}(\b^n_1)  \geq  O_{t}^{n-1}(\b^n_1) \mbox{ and } \Int_{\tau^n_{1}+\Delta}^{T}(Y_{t}^{n-1}(\b^n_1)-O_{t}^{n-1}(\b^n_1))\,dK_{t}^{n-1}(\b^n_1)=0.
  \end{array}
\right.
\end{equation}
Let $D_1^n= \{ \tau_1^n \leq T-2\Delta\}$ and $\bar D_1^n$ its complement, which belong to $\F_{\tau_1^n}$, and let $\gamma_2^n$ and $\t^n_2$ be the following stopping times:
$$
\gamma_2^n = \inf\{s \geq (\tau_1^n+\d), \, Y_s^{n-1}(\b^n_1) = O_s^{n-1}(\b^n_1)\},
$$
and 
$$
\tau_2^n = \gamma_2^n \1_{D_1^n} + T \1_{\bar{D_1^n}}.
$$
Since on $\bar{D_1^n}$, $\tau_1^n+\d > T-\Delta $, then by virtue of  Proposition \ref{restric} and Remark \ref{aptd}, we have:
$$
  \1_{\bar{D_1^n}} Y_{\tau_1^n+\d}^{n-1}(\bn_1) = \1_{\bar{D_1^n}} \E[\int_{\tau_1^n+\d}^Tg(s,X_s+\beta_1^n)ds|\cf_{\tau_1^n+\d}].
$$
Now, once more by Proposition \ref{restric}, $\P$-a.s. $\gamma_2^n \1_{D_1^n} \le T-\d$ since $Y_{T-\d}^{n-1}(\b^n_1) = O_{T-\d}^{n-1}(\b^n_1)$. On the other hand, since $K^{n-1}(\beta_1^{n-1})$ is constant on $[\t^n_1+\d, \t^n_2]\cap D^n_1$, 
we have:
\def \nn{\nonumber}
\begin{eqnarray}
       \1_{D_1^n} Y_{\tau_1^n+\d}^{n-1}(\bn_1)&=& \E\{\1_{D_1^n} \Int_{\tau_1^n+\d}^{\tau_{2}^{n}} g(s,X_{s}+\bn_1)ds + \1_{D_1^n}Y_{\tau_2^n}^{n-1}(\bn_1)|\F_{\tau_1^n+\d}\} \nonumber \\
    &=&   \E\big\{\1_{D_1^n} \Int_{\tau_1^n+\d}^{\tau_{2}^{n}} g(s,X_{s}+\bn_1)ds
    + \1_{D_1^n \cap [\tau_2^n < T-\Delta]}O^{n-1}_{\tau_2^{n}}(\bn_1) \nonumber \\&{}&\qq+\1_{D_1^n \cap [\tau_2^n= T-\Delta]} \E\{\int_{\tau_2^{n}}^T g(s,X_s+\bn_1) ds|\F_{\tau_2^n}\}|\F_{\tau_1^n+\d}\big\}.
    \lb{eq1}\end{eqnarray}
Next we focus on $\1_{D_1^n \cap [\tau_2^n < T-\Delta]}O^{n-1}_{\tau_2^{n}}(\bn_1)$. Recall that 
$$O_{\tau_2^n}^{n-1}(\beta_1^n)=
   \sum_{a \in U} \1_{\{\beta_1^n =a\}}  O_{\tau_2^n}^{n-1}(a)$$
and for any $a\in \R^{\ell}$,
 \begin{eqnarray*}
      O_{\tau_2^n}^{n-1}(a):=\E\big[\Int_{\tau_2^n}^{(\tau_2^n+\Delta)\wedge T}g(s,X_{s}+a)\,ds|\mathcal{F}_{\tau_2^n}\big]
         +\1_{[\tau_2^n< T-\Delta]} \E[\max_{\beta\in U} \{-\psi(\beta)+Y_{\tau_2^n+\Delta}^{n-2}(a+\beta)\}|\mathcal{F}_{\tau_2^n}].
\end{eqnarray*}
Therefore, since on $D_1^n$, $\t^n_1+\d\le\tau_2^n \leq T-\Delta$ and $\beta_1^n$ is $\cf_{\t^n_1+\d}$-measurable, we have:
\def \qq {\qquad}
\begin{eqnarray*}
    \1_{D_1^n} O_{\tau_2^n}^{n-1}(\beta_1^n)
&=&\1_{D_1^n}\E\big[\Int_{{\tau_2^n}}^{({\tau_2^n}+\Delta)}g(s,X_{s}+\beta_1^n)\,ds|\mathcal{F}_{{\tau_2^n}}\big]
        \\&+&\1_{D_1^n \cap [{\tau_2^n}< T-\Delta]} \E[\sum_{a \in U} \1_{\{\beta_1^n =a\}} \{\max_{\beta\in U} \{-\psi(\beta)+Y_{{\tau_2^n}+\Delta}^{n-2}(a+\beta)\}\}|\mathcal{F}_{{\tau_2^n}}].
        \end{eqnarray*} 
        But,  
         \begin{eqnarray*}\sum_{a \in U} \1_{\{\beta_1^n =a\}} \{\max_{\beta\in U} \{-\psi(\beta)+Y_{{\tau_2^n}+\Delta}^{n-2}(a+\beta)\}\}&=& \max_{\beta\in U} \{\sum_{a \in U} \1_{\{\beta_1^n =a\}} (-\psi(\beta)+Y_{{\tau_2^n}+\Delta}^{n-2}(a+\beta))\}\\ &=&\max_{\beta\in U} \{ -\psi(\beta)+Y_{{\tau_2^n}+\Delta}^{n-2}(\beta_1^n+\beta)\}.
         \end{eqnarray*}
        Then
        \begin{eqnarray}
     \1_{D_1^n} O_{\tau_2^n}^{n-1}(\beta_1^n)&=& \1_{D_1^n} \E\big[\Int_{{\tau_2^n}}^{({\tau_2^n}+\Delta)}g(s,X_{s}+\beta_1^n)\,ds|\mathcal{F}_{{\tau_2^n}}\big]\nonumber
        \\&+&\1_{D_1^n \cap [{\tau_2^n}< T-\Delta]} \E[\max_{\beta\in U} \{ -\psi(\beta)+Y_{{\tau_2^n}+\Delta}^{n-2}(\beta_1^n+\beta)\}|\mathcal{F}_{{\tau_2^n}}].
        \lb{eq2}
\end{eqnarray}
Now, for every $l\in \ci$, let 
\begin{equation*}
\Gamma^{2,n}_l = \bigcap_{ 1\leq j \leq p}\{-\psi(a_l)+Y _{\tau_{2}^n+\Delta}^{n-2}(\beta_{1}^n+a_l) \geq -\psi(a_j)+Y _{\tau_{2}^n+\Delta}^{n-2}(\beta_{1}^n+a_j)\},
\end{equation*}

$\tbar{\Gamma}^{2,n}_1 = {\Gamma}^{2,n}_1$, for $l\ge 2$,
 $ \tbar{\Gamma}^{2,n}_l =\Gamma^{2,n}_l \setminus [\Gamma^{2,n}_1 \cup \ldots \Gamma^{2,n}_{l-1}]$,
and $$
\beta_2^n = \1_{D_1^n}\big(\sum_{1\leq l \leq p} a_l \1_{\tbar{\Gamma}^{2,n}_l}\big)+a_1 \1_{\overline{D_1^n}}.$$
The last random variable is $\mathcal{F}_{\tau_2^n + \Delta}-$measurable since the sets $\tbar{\Gamma}^{2,n}_l$ belong to $\mathcal{F}_{\tau_2^n + \Delta}$ and its value on ${\overline{D_1^n}}$ is irrelevant since, taking into account the required delay, the interventions after $T$ are not allowed. For that reason, we  take $a_1$, but we can consider any other element of $U$. Now, if $ \tbar{\Gamma}^{2,n}_l $ is not empty, we have   
$$
\max_{\beta\in U} \{ -\psi(\beta)+Y_{{\tau_2^n}+\Delta}^{n-2}(\beta_1^n+\beta)\} = -\psi(a_l)+Y_{{\tau_2^n}+\Delta}^{n-2}(\beta_1^n+a_l) \text{ on } \tbar{\Gamma}^{2,n}_l, 
$$
which means that
\be \lb{eqmax}
\max_{\beta\in U} \{ -\psi(\beta)+Y_{{\tau_2^n}+\Delta}^{n-2}(\beta_1^n+\beta)\} = -\psi(\beta_2^n)+Y_{{\tau_2^n}+\Delta}^{n-2}(\beta_1^n+\beta_2^n).
\ee
Therefore by \eqref{eq1}, \eqref{eq2} and \eqref{eqmax},
\begin{eqnarray*}
   \1_{D_1^n} Y^{n-1}_{\tau_1^n + \Delta} (\beta_1^n)&= &\E[\1_{D_1^n} \Int_{\tau_1^n + \Delta}^{{\tau_2^n}+\Delta}g(s,X_{s}+\beta_1^n)\,ds
        \\&{}&+ \1_{D_1^n \cap [{\tau_2^n}< T-\Delta]} \{-\psi(\beta^n_2)+Y_{{\tau_2^n}+\Delta}^{n-2}(\beta_1^n+\beta^n_2)\}    
    \big|\F_{\tau_1^n + \Delta}]
     \end{eqnarray*}
which implies, 
\begin{eqnarray*}
   Y^{n-1}_{\tau_1^n + \Delta} (\beta_1^n)&=& \E[\1_{\bar{D_1^n}} \int_{\tau_1^n+\d}^Tg(s,X_s+\beta_1^n)ds+ \1_{D_1^n} \Int_{\tau_1^n + \Delta}^{{\tau_2^n}+\Delta}g(s,X_{s}+\beta_1^n)\,ds\\
        &+& \1_{D_1^n \cap [{\tau_2^n}< T-\Delta]} \{-\psi(\beta^n_2)+Y_{{\tau_2^n}+\Delta}^{n-2}(\beta_1^n+\beta^n_2)\}    
    \big|\F_{\tau_1^n + \Delta}].
     \end{eqnarray*}
By plugging the last equality in (\ref{y0n2}) and taking into account  the fact that $D_1^n\subset \{\tau_1^n < T-\d\}$, $\1_{D^n_1}+\1_{\overline{D^n_1}}=1$ and $Y_{{T}}^{n-2}(\beta_1^n+\beta^n_2)=0$, we obtain
\begin{eqnarray*}
    Y_{0}^{n}(0)
             &=& \E\big[\1_{\bar{D}_1^n} \{\Int_{0}^{\tau_{1}^{n}+\d} g(s,X_{s}) \,ds  + \Int_{\tau_1^n + \Delta}^{T}g(s,X_{s}+\beta_1^n)\,ds -\1_{[\tau_1^{n}< T-\Delta]} \psi(\beta^n_1)\}\\ &+& \1_{D_1^n} \{\Int_{0}^{\tau_{1}^{n}+\d} g(s,X_{s}) \,ds  + \Int_{\tau_1^n + \Delta}^{({\tau_2^n}+\Delta)}g(s,X_{s}+\beta_1^n)\,ds -\psi(\beta^n_1)\}
        \\&+& \1_{D_1^n \cap [{\tau_2^n}< T-\Delta]} (-\psi(\beta^n_2))+\1_{D_1^n}Y_{{\tau_2^n}+\Delta}^{n-2}(\beta_1^n+\beta^n_2) \big].
    \end{eqnarray*}
    \noindent 
Now, for any $k\in \{2,\cdots ,n\},$ 
let $D_{k-1}^n = \{\tau_{k-1}^n \le T-2\Delta\}$,  
$$\gamma_{k}^{n}=\inf\{s\geq (\tau_{k-1}^{n}+\Delta), \,\, Y_{s}^{n-k+1}( \beta_{1}^{n}+\cdots+\beta_{k-1}^{n})=O_{s}^{n-k+1}( \beta_{1}^{n}+\cdots+\beta_{k-1}^{n})\}
$$
and 
$$\tau_{k}^{n}= \gamma_k^n \1_{D_{k-1}^n} + T \1_{\overline{D_{k-1}^n}}.$$ 
We can easily see that the sequence of sets 
${(D_k^n)}_{k=1,...,n-1}$ is decreasing, the set $D_k^n$ is \\$\F_{\tau_k^n}-$measurable and the family 
$$\{\overline{D_1^n},\, D_{n-1}^n,\,D_i^n \cap \overline{D_{i+1}^n};\, 1 \leq i \leq n-2\}$$ forms a complete system. So let $k\in \{3,\cdots,n\}$, $l\in \cal I$, and let us set:
\begin{equation*}
\Gamma^{k,n}_l =\{-\psi(a_l)+Y _{\tau_{k}^n+\Delta}^{n-k}(\beta_{1}^n+\ldots+\beta_{k-1}^n+a_l) \geq -\psi(a_j)+Y _{\tau_{k}^n+\Delta}^{n-k}(\beta_{1}^n+\ldots+\beta_{k-1}^n+a_j), \,\forall j=1,\dots,p\}
\end{equation*}
and $\tbar{\Gamma}^{k,n}_1  =\Gamma^{k,n}_1$ and for $l\ge 2, \tbar{\Gamma}^{k,n}_l  =\Gamma^{k,n}_l  \setminus \bigcup_{ m=1}^{l-1} \Gamma^{k,n}_m$.
Those sets $\tbar{\Gamma}^{k}_l$, $l\in \ci$, belong to $\F_{\tau_k^n + \Delta}$, are pairwise disjoint  and on 
$\tbar{\Gamma}^{k}_l  $, if non empty, $a_l$ realizes the maximum over $ U$ of 
$$(-\psi(a)+Y _{\tau_{k}^n+\Delta}^{n-k}(\beta_{1}^n+\ldots+\beta_{k-1}^n+a)).$$
So let $$
\beta_k^n = \{\sum_{1\leq l \leq p} a_l \1_{\tbar{\Gamma}^{k,n}_l}\}\1_{D_{k-1}^n} +a_1 \1_{\overline{D_{k-1}^n}}.
$$
On $D_{k-1}^n$, we have:
$$\max_{a \in U}(-\psi(a)+Y _{\tau_{k}^n+\Delta}^{n-k}(\beta_{1}^n+\ldots+\beta_{k-1}^n+a))= -\psi(\beta_k^n)+Y _{\tau_{k}^n+\Delta}^{n-k}(\beta_{1}^n+\ldots+\beta_{k-1}^n+\beta_k^n).$$
 By iterating the same  reasoning as above, we obtain: 
{\small{
\begin{eqnarray*}
  Y_{0}^{n}(0)    &=& \E\Big[\1_{\overline{D_1^n}} \{\Int_{0}^{\tau_{1}^{n}+\Delta } g(s,X_{s}) \,ds + \Int_{\tau_{1}^{n}+\Delta}^{T} g(s,X_{s}+\beta_1^n) \,ds +\1_{\{\tau_{1}^n <T-\Delta\}}  (-\psi(\beta_{1}^n))\}\\&+& 
  \sum_{i=1}^{k-1} \1_{D_i^n \cap \overline{D_{i+1}^n}} \{\Int_{0}^{\tau_{i+1}^{n}+\Delta } g(s,X_{s}^{\delta^n}) \,ds+\Int_{\tau_{i+1}^{n}+\Delta }^T g(s,X_{s} + \beta_1^n+\ldots+\beta_{i+1}^n) \,ds -  \sum_{j=1}^{i+1} \psi(\beta^n_j) \1_{[\tau_{j}^n < T-\Delta]} \}\\&+& \1_{D^n_k} \{\Int_{0}^{\tau_{k+1}^{n}+\Delta } g(s,X_{s}^{\delta^n}) \,ds-\sum_{i=1}^{k+1} \psi(\beta_i) \1_{[\tau_{i}^n < T-\Delta]} + Y_{\tau_{k+1}^n+\Delta}^{n-k-1} (\beta_1^n+\ldots+\beta_{k+1}^n) \}
 \Big]
\end{eqnarray*}
}}
where for any $s<\t_{k+1}^n+\d$, $$X_s^{\delta^n}=X_s1_{\{0\le s<\t^n_{1}+\d\}}+\sum_{i=1}^k(X_s+\b^n_1+\b^n_2+\dots+\b^n_{i})1_{\{\t^n_i+\d\le s<\t^n_{i+1}+\d\}}.$$
In particular, for $k=n-1$, we have
 {\small{
 \begin{eqnarray*}
  Y_{0}^{n}(0)    &=& \E\Big[\1_{\overline{D_1^n}} \{\Int_{0}^{\tau_{1}^{n}+\Delta } g(s,X_{s}) \,ds + \Int_{\tau_{1}^{n}+\Delta}^{T} g(s,X_{s}+\beta_1^n) \,ds +\1_{\{\tau_{1}^n <T-\Delta\}}  (-\psi(\beta_{1}^n))\}\\&+& 
  \sum_{i=1}^{n-2} \1_{D_i^n \cap \overline{D_{i+1}^n}} \{\Int_{0}^{\tau_{i+1}^{n}+\Delta } g(s,X_{s}^{\delta^n}) \,ds +\Int_{\tau_{i+1}^{n}+\Delta }^T g(s,X_{s} + \beta_1^n+\ldots+\beta_{i+1}^n) \,ds -  \sum_{j=1}^{i+1} \psi(\beta^n_j) \1_{[\tau_{j}^n < T-\Delta]} \}\\&+& \1_{D^n_{n-1}} \{\Int_{0}^{\tau_{n}^{n}+\Delta } g(s,X_{s}^{\delta^n}) \,ds-\sum_{i=1}^{n} \psi(\beta_i)\1_{[\tau_{i}^n < T-\Delta]} + Y_{\tau_{n}^n+\Delta}^{0} (\beta_1^n+\ldots+\beta_{n}^n) \}
 \Big].   
\end{eqnarray*}
}}
However, 
\begin{eqnarray*}
    Y_{\tau_{n}^n + \Delta}^{0}(\beta_1^n+\ldots+\beta_{n}^n)=\E[\Int_{\tau_{n}^n + \Delta}^{T} g(s,X_s+\beta_1^n+\ldots+\beta_{n}^n) ds |\F_{\tau_{n}^n + \Delta}]
\end{eqnarray*}
which implies that
\small{
 \begin{eqnarray}
  Y_{0}^{n}(0)    &=& \E\Big[\1_{\overline{D_1^n}} \{\Int_{0}^{\tau_{1}^{n}+\Delta } g(s,X_{s}) \,ds + \Int_{\tau_{1}^{n}+\Delta}^{T} g(s,X_{s}+\beta_1^n) \,ds +\1_{\{\tau_{1}^n <T-\Delta\}}  (-\psi(\beta_{1}^n))\}\nn\\&+& 
  \sum_{i=1}^{n-2} \1_{D_i^n \cap \overline{D_{i+1}^n}} \{\Int_{0}^{\tau_{i+1}^{n}+\Delta } g(s,X_{s}^{\delta^n}) \,ds +\Int_{\tau_{i+1}^{n}+\Delta }^T g(s,X_{s} + \beta_1^n+\ldots+\beta_{i+1}^n) \,ds -  \sum_{j=1}^{i+1} \psi(\beta^n_j) \1_{[\tau_{j}^n < T-\Delta]} \}\nn\\&+& \1_{D_{n-1}^n} \{\Int_{0}^{T} g(s,X_{s}^{\delta^n}) \,ds-\sum_{i=1}^{n} \psi(\beta^n_i) \1_{[\tau_{i}^n < T-\Delta]} \}
 \Big]  \lb{eqyno}
\end{eqnarray}
}where $\delta^n=(\t^n_k,\bn_k)_{k=1,n}$ and $X^{\dl^n}$ is defined as in \eqref{xd}. Let us now show that $\dl^n$ is admissible. First note that $\t^n_1\le T-\d$. Next for $k\in \{1,...,n-1\}$, on $D^n_k$, $\gamma^n_{k+1} \le T-\d$, then it follows from its definition that $\t^n_{k+1}=\gamma^n_{k+1}\le T$. Now if $\omega \in \bar D^n_{k}$, 
$\t^n_{k+1}(\omega)=T$. Thus 
$\t^n_{k+1}(\omega)\le T$, $\P$-a.s.
Next on $D^n_k$, 
$\t^n_{k}+\d\le \t^n_{k+1}=\g^n_{k+1}\le T-\d$ which implies that on $D^n_k$, 
$\t^n_{k+1}\ge \min\{\t^n_{k}+\d, T\}$ while on  $\bar D^n_k$,
$\t^n_{k+1}=T$. It follows that 
$\t^n_{k+1}\ge \min\{\t^n_{k}+\d, T\}$, $\P$-a.s. for any  $k\in \{1,...,n-1\}$. Finally for any $k\in \{1,\ldots,n\}$, $\b_k^{n} \in \F_{{\tau}_k^n + \d}$, then $\dl^n$ is an admissible strategy. Next \begin{eqnarray}
    J(\delta^n)&=&\E[\{\1_{\overline{D_1^n}}+
\sum_{i=1}^{n-2} \1_{D_i^n \cap \overline{D_{i+1}^n}}+\1_{D_{n-1}^n}\}
\{\Int_{0}^{T} g(s,X^{\delta^n}_{s}) \,ds - \sum_{k =1,n} \psi(\beta_{k}^n)\1_{\{\tau_{k}^n <T-\Delta\}} \}
]\nn\\
&=&\text{the right-hand side of \eqref{eqyno}},\nn
\end{eqnarray}
thus $J(\delta^n)=Y^n_0(0).$
\medskip

\noindent \textbf{Step 2:} $J(\delta^{n})\geq J(\bar{\delta}_{n})$ for any $\bar{\delta}_{n}\in\mathcal{A}_{n}$. 
\medskip
\def \bt {\bar{\tau}}
\def \btn{\bt^n}

\noindent Let $\bar{\delta}_{n}=(\bar{\tau}_{k}^{n},\bar{\beta}_{k}^{n})_{k\ge 1}$ be an admissible strategy of $\ca_n$. Therefore $\bt^n_{k}=T$, for any $k\ge n+1$, $\btn_k\le \btn_{k+1}$ and for any $k\ge 1$, $T\ge \btn_{k+1}\ge \min\{T,\btn_{k}+\d\}$. On the other hand  
$\bar{\beta}_{k}^{n}$ is $\Fc_{(\bt^n_k+\d)\wedge T}$-measurable.  
\medskip
    

\noindent The characterization \eqref{ordrenn} of $Y^n(0)$ implies that: 
\begin{eqnarray}\label{y0nexp}
Y_{0}^{n}(0)&\geq& \E[\Int_{0}^{\bar{\tau}_{1}^{n}} g(s,X_{s}) \,ds +O^n_{\bar{\tau}_{1}^{n}}(0)]\nonumber\\
&\geq& \E\big[\Int_{0}^{\bar{\tau}_{1}^{n}}g(s,X_{s})\,ds+\1_{[\bar{\tau}_{1}^{n}< T-\Delta]} \E[\Int_{\bar{\tau}_{1}^n}^{\bar{\tau}_{1}^{n}+\Delta}g(s,X_{s})\,ds|\Fc_{\bar{\tau}_{1}^n}]\nonumber \\&{}&+
\1_{[\bar{\tau}_{1}^{n}< T-\Delta]} \E[\big(-\psi(\bar{\beta}_{1}^{n})+Y_{\bar{\tau}_{1}^{n}+\Delta}^{n-1}(\bar{\beta}_{1}^{n})\big)|\Fc_{\bar{\tau}_{1}}] + \1_{[T-\Delta \leq \bar{\tau}_{1}^{n}\leq T]} \E[\Int_{\bar{\tau}_{1}^{n}}^{T}g(s,X_{s})\,ds|\Fc_{\bar{\tau}_{1}^{n}}] \big]\nonumber\\
&\geq& \E \big[\Int_{0}^{(\bar{\tau}_{1}^{n}+\Delta)\wedge T}g(s,X_{s})\,ds+
\1_{[\bar{\tau}_{1}^{n}< T-\Delta]} \big\{-\psi(\bar{\beta}_{1}^{n})+Y_{\bar{\tau}_{1}^{n}+\Delta}^{n-1}(\bar{\beta}_{1}^{n})\big\} \big].
\end{eqnarray}
 Next by \eqref{oordn22} and the equation satisfied by $(Y_t^{n-1}(\bar{\beta}_{1}^{n}))_{t\ge \bar \t^n_1+\d}$, we have:
\begin{eqnarray*}
\1_{[\bar{\tau}_{1}^{n}< T-\Delta]}\,Y_{\bar{\tau}_{1}^{n}+\Delta}^{n-1}(\bar{\beta}_{1}^{n})
&\geq&\1_{[\bar{\tau}_{1}^{n}< T-\Delta]}\,\E\big[\Int_{\bar{\tau}_{1}^{n}+\Delta}^{\bar{\tau}_{2}^{n}}g(s,X_{s}+\bar{\beta}_{1}^{n})\,ds+ O_{\bar{\tau}_{2}^{n}}^{n-1}(\bar{\beta}_{1}^{n})|\mathcal{F}_{\bar{\tau}_{1}^{n}+\Delta}\big]\\
&=&\E\big[\1_{[\bar{\tau}_{1}^{n}< T-\Delta]}\Int_{\bar{\tau}_{1}^{n}+\Delta}^{\bar{\tau}_{2}^{n}}g(s,X_{s}+\bar{\beta}_{1}^{n})\,ds+\1_{[\bar{\tau}_{1}^{n}< T-\Delta]} O_{\bar{\tau}_{2}^{n}}^{n-1}(\bar{\beta}_{1}^{n})|\mathcal{F}_{\bar{\tau}_{1}^{n}+\Delta}\big]\\
&\ge &\E\big[\1_{[\bar{\tau}_{1}^{n}< T-\Delta]}\Int_{\bar{\tau}_{1}^{n}+\Delta}^{\bar{\tau}_{2}^{n}}g(s,X_{s}+\bar{\beta}_{1}^{n})\,ds+\1_{[\bar{\tau}_{2}^{n}<T-\d]}O_{\bar{\tau}_{2}^{n}}^{n-1}(\bar{\beta}_{1}^{n})\\&&\qq\qq +\1_{{[\bar{\tau}_{1}^{n}< T-\Delta]} \cap [T-\d \leq \bar{\tau}_{2}^{n}\leq T]}
\E\big[\Int_{\bar{\tau}_{2}^{n}}^{T}g(s,X_{s}+\bar{\beta}_{1}^{n})ds|
\mathcal{F}_{\bar{\tau}_{2}^{n}}]
|\mathcal{F}_{\bar{\tau}_{1}^{n}+\Delta}\big].
\end{eqnarray*}The first inequality stems from the fact that the process 
$(K_t^{n-1}(\bar{\beta}_{1}^{n}))_{t\ge \bar \t_1+\d}$ is non-decreasing. Once more $\bar{\beta}_{1}^{n}$ is an $\F_{T\wedge(\bar{\tau}_{1}^{n}+\Delta)}-$ measurable random variable and $\bar{\beta}_{1}^{n} = \sum_{a \in U} a\1_{[\bar{\beta}_{1}^{n} =a]}$, then we have:
\begin{eqnarray*}
\1_{[\bar{\tau}_{2}^{n}<T-\d]}O_{\bar{\tau}_{2}^{n}}^{n-1}(\bar{\beta}_{1}^{n})&=& \sum_{a \in U}\1_{[\bar{\tau}_{2}^{n}<T-\d]} O_{\bar{\tau}_{2}^{n}}^{n-1}(a) \1_{[\bar{\beta}_{1}^{n} = a]}\\
&=& \sum_{a \in U} \{\E[\1_{[\bar{\tau}_{2}^{n}<T-\d]}\int_{\bar{\tau}_{2}^{n}}^{(\bar{\tau}_{2}^{n} + \Delta)} g(s,X_s+a) ds \\&&\qq +  \1_{[\bar{\tau}_{2}^{n}<T-\Delta]} \E[\max_{\beta \in U} (-\psi(\beta) + Y_{\bar{\tau}_{2}^{n} + \Delta}^{n-2} (a+\beta))|\F_{\bar{\tau}_{2}^{n}}] 
   \}\1_{[\bar{\beta}_{1}^{n} = a]}  \\
&\geq& \sum_{a \in U}\E[ \{\1_{[\bar{\tau}_{2}^{n}<T-\d]} \int_{\bar{\tau}_{2}^{n}}^{(\bar{\tau}_{2}^{n} + \Delta)} g(s,X_s+a) ds\\&&\qq  +  \1_{[\bar{\tau}_{2}^{n}<T-\Delta]}  (-\psi(\bar{\beta}_{2}^{n}) + Y_{\bar{\tau}_{2}^{n} + \Delta}^{n-2} (a+\bar{\beta}_{2}^{n}))
   \}|\F_{\bar{\tau}_{2}^{n}}]\1_{[\bar{\beta}_{1}^{n} = a]} \\
   &\ge& \E[ \{\1_{[\bar{\tau}_{2}^{n}<T-\d]}\int_{\bar{\tau}_{2}^{n}}^{(\bar{\tau}_{2}^{n} + \Delta)} g(s,X_s+\bar{\beta}_{1}^{n}) ds \\&{}&\qq  +  \1_{[\bar{\tau}_{2}^{n}<T-\Delta]}  (-\psi(\bar{\beta}_{2}^{n}) )+ Y_{\bar{\tau}_{2}^{n} + \Delta}^{n-2} (\bar{\beta}_{1}^{n}+\bar{\beta}_{2}^{n})
   \}|\F_{\bar{\tau}_{2}^{n}}].
\end{eqnarray*}
Then
{\small{\begin{eqnarray*}
&{}&\1_{[\bar{\tau}_{1}^{n}< T-\Delta]}\,Y_{\bar{\tau}_{1}^{n}+\Delta}^{n-1}(\bar{\beta}_{1}^{n})\geq\E\big[\1_{[\bar{\tau}_{1}^{n}< T-\Delta]}\,\Int_{\bar{\tau}_{1}^{n}+\Delta}^{\bar{\tau}_{2}^{n}}g(s,X_{s}+\bar{\beta}_{1}^{n})\,ds+\1_{[\bar{\tau}_{1}^{n}< T-\Delta  \leq \bar{\tau}_{2}^{n}\leq T]}
\E\big[\Int_{\bar{\tau}_{2}^{n}}^{T}g(s,X_{s}+\bar{\beta}_{1}^{n})ds|
\mathcal{F}_{\bar{\tau}_{2}^{n}}]\\&{}&\qq\qq+
\E[\1_{[\bar{\tau}_{2}^{n}<T-\d]}\int_{\bar{\tau}_{2}^{n}}^{(\bar{\tau}_{2}^{n} + \Delta)} g(s,X_s+\bar{\beta}_{1}^{n}) ds +  \1_{[\bar{\tau}_{2}^{n}<T-\Delta]}  \{-\psi(\bar{\beta}_{2}^{n}) + Y_{\bar{\tau}_{2}^{n} + \Delta}^{n-2} (\bar{\beta}_{1}^{n}+\bar{\beta}_{2}^{n})\}
   |\F_{\bar{\tau}_{2}^{n}}]|\mathcal{F}_{\bar{\tau}_{1}^{n}+\Delta}\big]\\
&{}&\qq\qq\qq\ge \E\big[\1_{[\bar{\tau}_{1}^{n}< T-\Delta]}\Int_{\bar{\tau}_{1}^{n}+\Delta}^{(\bar{\tau}_{2}^{n}+\d )\wedge T}g(s,X_{s}+\bar{\beta}_{1}^{n})\,ds+  \1_{[\bar{\tau}_{2}^{n}<T-\Delta]}\{-\psi(\bar{\beta}_{2}^{n}) + Y_{\bar{\tau}_{2}^{n} + \Delta}^{n-2} (\bar{\beta}_{1}^{n}+\bar{\beta}_{2}^{n})\}|\mathcal{F}_{\bar{\tau}_{1}^{n}+\Delta}\big].
\end{eqnarray*}}}
Therefore going back to \eqref{y0nexp} to obtain, $$\begin{array}{lll}
Y_{0}^{n}(0)&\geq&\E\Big[\Int_{0}^{\bar{\tau}_{1}^{n}+\Delta}g(s,X_{s})\,ds+\Int_{(\bar{\tau}_{1}^{n}+\Delta)\wedge T}^{(\bar{\tau}_{2}^{n}+\Delta)\wedge T}g(s,X_{s}+\bar{\beta}_{1}^{n})\,ds+\1_{[\bar{\tau}_{1}^{n} <T-\Delta]}(-\psi(\bar{\beta}_{1}^{n}))\\&+&\1_{[\bar{\tau}_{2}^{n}<T-\Delta]}(-\psi(\bar{\beta}_{2}^{n}))+\1_{[\bar{\tau}_{2}^{n}< T-\Delta]}Y_{\bar{\tau}_{2}^{n}+\Delta}^{n-2}(\bar{\beta}_{1}^{n}+\bar{\beta}_{2}^{n})\Big].
\end{array}$$
Repeating now this procedure as many times as necessary to obtain: For any $n\geq 1$,
$$\begin{array}{lll}
Y_{0}^{n}(0)&\geq&\E[\Int_{0}^{\bar{\tau}_{1}^{n}+\Delta} g(s,X_{s}) \;ds +\Sum_{1 \leq k\leq n-1}\Int_{(\bar{\tau}_{k}^{n}+\Delta)\wedge T}^{(\bar{\tau}_{k+1}^{n}+\Delta)\wedge T}g(s,X_{s}+\bar{\beta}_{1}^{n}+\ldots+\bar{\beta}_{k}^{n})\,ds \\
&+&\Sum_{1\leq k\leq n}(-\psi(\bar{\beta}_{k}^{n}))\1_{[\bar{\tau}_{k}^{n}<T-\Delta]}+\1_{[\bar{\tau}_{n}^{n}<T-\Delta]} Y^0_{\bar{\tau}_n^n} (\bar{\beta}_{1}^{n}+\ldots+\bar{\beta}_{n}^{n})].
\end{array}$$
Finally, taking into account (\ref{y0ec}) and \eqref{yxi}, we get
$$\begin{array}{lll}
Y_{0}^{n}(0)&\geq&\E[\Int_{0}^{\bar{\tau}_{1}^{n}+\Delta} g(s,X_{s}) \;ds +\Sum_{1 \leq k\leq n}\Int_{(\bar{\tau}_{k}^{n}+\Delta)\wedge T}^{(\bar{\tau}_{k+1}^{n}+\Delta)\wedge T}g(s,X_{s}+\bar{\beta}_{1}^{n}+\ldots+\bar{\beta}_{k}^{n})\,ds \\
&+&\Sum_{1\leq k\leq n}(-\psi(\bar{\beta}_{k}^{n}))\1_{[\bar{\tau}_{k}^{n}<T-\Delta]}]\\
&=&J(\bar{\delta}_{n}).
\end{array}$$
Henceforth, $Y_{0}^{n}(0)=J(\delta^{n})\geq J(\bar{\delta}_{n})$ which implies that the strategy $\delta^n$ is optimal over the set $\A_n$.
\end{proof}

By using the last result and taking   account of Proposition \ref{sup}, we obtain:
\begin{Theorem}
   The strategy $\delta^\mathfrak{X}$ is optimal over the set $\A$, that is,
   $$Y_0^\mathfrak{X}(0)=J(\delta^\mathfrak{X})=\sup_{\delta \in \A_\mathfrak{X}}J(\delta)=\sup_{\delta \in \A}J(\delta).\qed$$ 
\end{Theorem}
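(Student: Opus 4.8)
The plan is to obtain the statement as an immediate consequence of the two preceding propositions, specialized to the largest admissible number of impulses. First I would invoke Proposition~\ref{approximation} with the particular choice $n=\mathfrak{X}=[\frac{T}{\Delta}]$. This produces a concrete strategy $\delta^\mathfrak{X}\in\mathcal{A}_\mathfrak{X}$ together with the identity $\sup_{\delta\in\mathcal{A}_\mathfrak{X}}J(\delta)=J(\delta^\mathfrak{X})=Y_0^\mathfrak{X}(0)$, so that $\delta^\mathfrak{X}$ is optimal among all strategies using at most $\mathfrak{X}$ impulses and its value is read off from the first component of the reflected BSDE evaluated at time $0$.

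Next I would use Proposition~\ref{sup}, which asserts $\sup_{\delta\in\mathcal{A}}J(\delta)=\sup_{\delta\in\mathcal{A}_\mathfrak{X}}J(\delta)$; the point there, already established, is that the delay $\Delta$ together with the finite horizon $T$ forces every admissible strategy to perform at most $\mathfrak{X}$ effective impulses, so restricting to $\mathcal{A}_\mathfrak{X}$ costs nothing. Chaining the two identities then yields $\sup_{\delta\in\mathcal{A}}J(\delta)=\sup_{\delta\in\mathcal{A}_\mathfrak{X}}J(\delta)=J(\delta^\mathfrak{X})=Y_0^\mathfrak{X}(0)$. Since $\mathcal{A}_\mathfrak{X}\subset\mathcal{A}$, the strategy $\delta^\mathfrak{X}$ is itself admissible in the full class $\mathcal{A}$, and as it attains the supremum over $\mathcal{A}$ it is optimal there, which is exactly the assertion.

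I do not expect any genuine obstacle: all the analytic work (existence and well-posedness of the reflected BSDEs, the Snell-envelope representation, and the verification and optimality arguments bounding $J(\bar\delta_n)$ from above by $Y_0^n(0)$) has already been carried out in Propositions~\ref{yksi}, \ref{restric}, and especially \ref{approximation}. The only points I would double-check are purely bookkeeping: that $n=\mathfrak{X}$ is an admissible index in Proposition~\ref{approximation}, which holds since $\Delta<T$ forces $\mathfrak{X}=[\frac{T}{\Delta}]\ge 1$ and hence $\mathfrak{X}\in\N^*$, and that the inclusion $\delta^\mathfrak{X}\in\mathcal{A}$ legitimately transfers optimality over $\mathcal{A}_\mathfrak{X}$ to optimality over the whole class $\mathcal{A}$.
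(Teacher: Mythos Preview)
Your proposal is correct and matches the paper's approach exactly: the paper simply says ``By using the last result and taking account of Proposition~\ref{sup}, we obtain'' the theorem, i.e., it combines Proposition~\ref{approximation} at $n=\mathfrak{X}$ with Proposition~\ref{sup}. Your additional bookkeeping checks (that $\mathfrak{X}\ge 1$ and $\delta^\mathfrak{X}\in\mathcal{A}$) are harmless clarifications of points the paper leaves implicit.
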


\subsection{The risk sensitive case} \label{risk}
In this section, we suppose that the controller is sensitive with respect to risk, then he/she aims to make safe decisions by balancing both the expected yield and the possible risk.  We use an exponential function to model his/her utility which means that the total expected reward is given by:
\def \exp{\text{exp}}
\begin{equation}\lb{expcout}
    J(\delta) = \E\big[ \exp \,\theta \{\int_0^T g(s,X_s^{\delta}) ds-\sum_{n\geq 0} \psi(\xi_n) \1_{[\tau_n < T-\Delta]}\}\big]. 
\end{equation}
The role of the parameter $\theta$ is to adjust the sensitivity of the controller  with respect to risk. It can be positive in risk seeking scenario and negative when the controller is averse to risk. Without loss of generality and for the sake of simplicity, we suppose that $\theta = 1$. On the other hand on $g(.)$ and $\psi(.)$ we assume:  
\medskip
\def \cp {\mathcal{P}}

\noindent \begin{Assumption}\label{assumpt-expo}
${}$

\noindent i) The  mapping $(t,\omega,x)\in [0,T]\times\Omega\times  \R^{\ell} \mapsto g(t,\omega,x)\in \R$ is $\mathcal{P}\otimes \mathcal{B}(\R^{\ell})/\mathcal{B}(\R)$-measurable. 
Moreover there exists an $\R$-valued non-negative $\cp$-measurable process $(\gamma_t)_{t\le T}$ such that:
$$\E\big[ e^{\int_0^T \g_sds}\big]<\infty \mx{ and }\P-a.s. \mbox{ for any }(t,x)\in[0,T]\times\R^{\ell},\,|g(t,\omega,x)|\le \gamma_t(\omega).$$

\noindent ii) For any $a\in \R^{\ell}$, $\psi(a)\ge 0.$
\end{Assumption}
\subsubsection{The iterative scheme}
Our  scheme will be defined via iterative Snell envelopes in the following way:  $\forall t \leq T$, $\forall a \in \rl$,
\begin{equation}\label{y0}
    Y_t^0(a) = \E[\exp\{\int_t^T g(s,X_s+a) ds \}|\F_t].
\end{equation}
For every $ n \geq 1$,
$   {( Y_t^n(a)\; \exp\{\int_0^{t} g(s,X_s+a))}_{t \leq T}$ is the Snell envelope of the process\\ ${(\exp\{\int_0^{t} g(s,X_s+a)ds\} O_{t}^n (a))}_{t \leq T}$, that is, 
\begin{equation}
    Y_t^n(a) = \esssup_{\tau \in {\cal T}_t} \E[\exp\{\int_t^{\tau} g(s,X_s+a)ds\} O_{\tau}^n (a)|\F_t],
\end{equation}
where for any $t\le T$, 
\begin{eqnarray}\label{onexp}
    O_t^n(a)&=&\1_{[t<T-\Delta]}\E[\max_{\beta \in U} \{\exp[\int_t^{t+\Delta} g(s,X_s+a)ds-\psi(\beta) \1_{[t<T-\Delta]}]\, Y_{t+\Delta}^{n-1}(a+\beta) \}|\F_t] \nonumber\\&&\qq\qq+\1_{[T-\Delta\leq t \leq T]}\E[ \exp\{\int_t^{T} g(s,X_s+a)ds\}|\F_t].
\end{eqnarray}The process $O^n(a)$ is the predictable projection of 
\begin{eqnarray}
   \bar {\cal L}^n_t(a)=\1_{[t<T-\Delta]}\max_{\beta \in U} \{\exp[\int_t^{t+\Delta} g(s,X_s+a)ds-\psi(\beta) \1_{[t<T-\Delta]}]Y_{t+\Delta}^{n-1}(a+\beta) \}\nn\\+\1_{[T-\Delta\leq t \leq T]}\E[ \exp\{\int_t^{T} g(s,X_s+a)ds\}, \,t\le T.\nn
\end{eqnarray}
We then have:
\begin{Proposition}\label{propexp}
For any $n \in \mathbb N$ and  any $ a \in \rl$, 
    \begin{enumerate}
        \item[(i)]   The processes $Y^n(a)$ is well defined, continuous and verifies: 
        \begin{equation*}
           \forall t \leq T, \,\,0\le Y_t^n (a) \leq \E[e^{\int_t^T\gamma_sds}|\cf_t]
         ,\,\,\P-\mbox{a.s.}  \text{ and }Y_T^n(a) = 1.
        \end{equation*}
        \item[(ii)]  $\forall t \in [T-\d,T]$, $$Y_t^{n+1}(a)=O_t^{n+1}(a)=  \E[e^{\int_t^T g(s,X_s+a)ds}|\F_t].$$
    \end{enumerate}
\end{Proposition}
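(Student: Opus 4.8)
\emph{Proof plan.} The plan is to establish (i) by induction on $n$ and then to deduce (ii). For the base case $n=0$, the process $Y^{0}(a)$ is well defined since it is given explicitly as a conditional expectation. Multiplying it by the continuous, strictly positive factor $e^{\int_{0}^{t}g(s,X_{s}+a)\,ds}$ turns it into the martingale $t\mapsto \E[e^{\int_{0}^{T}g(s,X_{s}+a)\,ds}\,|\,\F_{t}]$, which is continuous because the filtration is Brownian; dividing back gives the continuity of $Y^{0}(a)$. The bounds $0\le Y_{t}^{0}(a)\le \E[e^{\int_{t}^{T}\gamma_{s}\,ds}\,|\,\F_{t}]$ and $Y_{T}^{0}(a)=1$ follow at once from the positivity of the exponential, from $|g|\le\gamma$ (Assumption \ref{assumpt-expo}-i)), and from $\int_{T}^{T}=0$.

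For the inductive step I would assume (i) at level $n-1$ for every $a\in\rl$ and first control the obstacle $O^{n}(a)$. Using $0\le Y^{n-1}\le \E[e^{\int\gamma}\,|\,\cdot]$, the sign condition $\psi\ge 0$ (so $e^{-\psi(\beta)}\le 1$) and $|g|\le\gamma$, one obtains $0\le O_{t}^{n}(a)\le \E[e^{\int_{t}^{T}\gamma_{s}\,ds}\,|\,\F_{t}]$. Consequently the multiplicative obstacle $\tilde U_{t}:=e^{\int_{0}^{t}g(s,X_{s}+a)\,ds}\,O_{t}^{n}(a)$ is nonnegative and dominated by the uniformly integrable martingale $M_{t}:=\E[e^{\int_{0}^{T}\gamma_{s}\,ds}\,|\,\F_{t}]$, finite by Assumption \ref{assumpt-expo}-i); hence $\tilde U$ is rcll and of class $[\mathrm{D}]$, and its Snell envelope exists. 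Exactly as in the risk-neutral analysis of Proposition \ref{yksi}, $O^{n}(a)$ is continuous on $[0,T]\setminus\{T-\Delta\}$, being assembled from predictable projections of continuous processes in a Brownian filtration, the continuity of $Y^{n-1}$ entering through the term $\max_{\beta\in U}\{\cdots Y_{t+\Delta}^{n-1}(a+\beta)\}$. The only candidate discontinuity is at $T-\Delta$, where I would compute the one-sided limits: the right limit and the value both equal $\E[e^{\int_{T-\Delta}^{T}g(s,X_{s}+a)\,ds}\,|\,\F_{T-\Delta}]$, while the left limit equals $e^{-\min_{\beta\in U}\psi(\beta)}\,\E[e^{\int_{T-\Delta}^{T}g(s,X_{s}+a)\,ds}\,|\,\F_{T-\Delta}]$, using $Y_{T}^{n-1}(a+\beta)=1$ (induction) and the fact that $e^{\int_{T-\Delta}^{T}g}$ factors out of the maximum. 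Since $\psi\ge 0$, the resulting jump $(1-e^{-\min_{\beta}\psi(\beta)})\,\E[e^{\int_{T-\Delta}^{T}g}\,|\,\F_{T-\Delta}]$ is nonnegative.

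Thus $\tilde U$ is rcll, of class $[\mathrm{D}]$, with positive jumps only. Invoking the Snell-envelope results gathered in the Appendix (cf.\ Theorem 1.3 in \cite{Hamadene1}), which give continuity of the Snell envelope of such an obstacle in a Brownian filtration, and then dividing by the continuous positive factor $e^{\int_{0}^{t}g}$, I would conclude that $Y^{n}(a)$ is continuous. Its nonnegativity follows from that of $\tilde U$, the upper bound from the domination $\tilde U\le M$, and $Y_{T}^{n}(a)=O_{T}^{n}(a)=\E[1\,|\,\F_{T}]=1$ since $\mathcal{T}_{T}=\{T\}$. This closes the induction and proves (i).

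Finally, for (ii), note that on $[T-\Delta,T]$ one has $O_{t}^{n+1}(a)=\E[e^{\int_{t}^{T}g(s,X_{s}+a)\,ds}\,|\,\F_{t}]$, so that $\tilde U_{t}=e^{\int_{0}^{t}g}\,O_{t}^{n+1}(a)=\E[e^{\int_{0}^{T}g(s,X_{s}+a)\,ds}\,|\,\F_{t}]=:N_{t}$ is a genuine martingale there. For $t\in[T-\Delta,T]$ every $\tau\in\mathcal{T}_{t}$ lies in $[T-\Delta,T]$, so $\E[\tilde U_{\tau}\,|\,\F_{t}]=\E[N_{\tau}\,|\,\F_{t}]=N_{t}$ by optional sampling; taking the essential supremum over $\tau$ gives $Y_{t}^{n+1}(a)\,e^{\int_{0}^{t}g}=N_{t}$, i.e.\ $Y_{t}^{n+1}(a)=\E[e^{\int_{t}^{T}g(s,X_{s}+a)\,ds}\,|\,\F_{t}]=O_{t}^{n+1}(a)$. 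The hard part will be the continuity of the multiplicative Snell envelope: one must verify carefully that the obstacle has a single, upward jump located at $T-\Delta$ and then apply, in the multiplicative setting, the criterion that a class-$[\mathrm{D}]$ obstacle with positive jumps only has a continuous Snell envelope in a Brownian filtration.
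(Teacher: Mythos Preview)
Your proposal is correct and follows essentially the same route as the paper: induction on $n$ for (i), with the base case handled by writing $Y^{0}(a)$ as a Brownian martingale times a continuous factor, and the inductive step by bounding $O^{n}(a)$ between $0$ and $\E[e^{\int_{t}^{T}\gamma}\,|\,\F_{t}]$, checking that the only possible discontinuity of the obstacle is a nonnegative jump at $T-\Delta$, and then invoking the continuity criterion for Snell envelopes of obstacles with positive jumps; part (ii) is then a direct computation on $[T-\Delta,T]$. Two small remarks: the paper does not compute the jump explicitly but simply asserts it is nonnegative, whereas you give the exact value $(1-e^{-\min_{\beta}\psi(\beta)})\,\E[e^{\int_{T-\Delta}^{T}g}\,|\,\F_{T-\Delta}]$, which is a nice addition; and the precise reference for continuity of the Snell envelope here is Proposition~\ref{cont-opt} in the Appendix rather than Theorem~1.3 of \cite{Hamadene1} (the latter is the reflected-BSDE result used in the risk-neutral section).
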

\begin{proof} We first focus on point (i).  
We proceed by induction.  Let us show the property for $n=0$. Let $a$ be an arbitrary element of $\rl$. Since $g(.)$ verifies 
Assumption \ref{assumpt-expo}-i), then the process $Y^0(a)$ is obviously well defined and continuous since, for any $t\le T$, 
$$
Y_t^0(a) = \underbrace{\E[\exp\{\int_0^T g(s,X_s+a) ds \}|\F_t]}_{\Theta_t}\exp\{-\int_0^t g(s,X_s+a) ds\}.
$$
But $(\Theta_t)_{t\le T}$ is a continuous martingale as the filtration $(\cf_t)_{t\le T}$ is Brownian, thus $Y^0(a)$ is continuous. 
In addition, by \eqref{y0} and Assumption \ref{assumpt-expo}, i) we have: $\forall t\le T$, 
     \begin{eqnarray*}
        0\le Y_t^0(a)\leq \E[e^{\int_t^T\gamma_sds}|\cf_t], \P-\mbox{a.s.}
    \end{eqnarray*}and finally $Y_T^0(a)=1$. Thus the property is valid for $n=0$. Assume now that the property is valid for some $n\ge 0$, i.e., for any $a\in \rl$, 
the processes $Y^n(a)$ is well defined, continuous and verifies: 
        $$
           \forall t \leq T, \,\,0\le Y_t^n (a) \leq \E[e^{\int_t^T\gamma_sds}|\cf_t],\, \P-\mbox{a.s.}  \text{ and }Y_T^n(a) = 1.
$$The process $(O_t^{n+1}(a))_{t\le T}$ is $\cadlag$. It is continuous except possibly at $T-\d$, since it may have a non negative jump at that point and it satisfies: $\forall t\le T$, 
    \begin{eqnarray*}
          0\le O_t^{n+1}(a) &=&\1_{[t<T-\Delta]}\E[\max_{\beta \in U} \{\exp(\int_t^{t+\Delta} g(s,X_s+a)ds-\psi(\beta)\1_{[t<T-\Delta]}) Y_{t+\Delta}^{n}(a+\beta) \}|\F_t]\\&\qq & \qq +\1_{[T-\Delta\leq t \leq T]}\E[ \exp\{\int_t^{T} g(s,X_s+a)ds\}|\F_t]\\
          &\leq& \1_{[t<T-\Delta]}\E[e^{\int_t^{t+\d}\g_sds}\E[e^{\int_{t+\d}^T\g_sds}|\F_{t+\d}]|\cf_t]+\1_{[T-\Delta\leq t \leq T]}\E[e^{\int_t^T\g_sds}|\cf_t]\\
         &=& \E[e^{\int_t^T\g_sds}|\cf_t].
    \end{eqnarray*}
    Next the process ${(\exp\{\int_0^{t} g(s,X_s+a)ds\} O_{t}^{n+1} (a))}_{t \leq T}$ is $\cadlag$, positive and upper bounded by the martingale $(\E[e^{\int_0^T\g_sds}|\cf_t])_{t\le T}$ then it is of class [D]. Therefore, the process 
     $Y^{n+1}(a)$ defined by 
     \begin{eqnarray*}
          Y_t^{n+1}(a) &=& \esssup_{\tau \in {\cal T}_0} \E[\exp\{\int_t^{\tau} g(s,X_s+a)ds\} O_{\tau}^{n+1} (a)|\F_t],\,\,t\le T,
    \end{eqnarray*}
    is well posed, $\cadlag$, belongs to class [D] and satisfies 
    $Y_T^{n+1}(a)=O_T^{n+1}(a)=1$. It is also continuous since the process $(O_t^{n+1}(a))_{t\le T}$ is c\`adl\`ag and has possibly a positive jump at $T-\d$ and $\exp(.)\ge 0$ (see Proposition \ref{cont-opt}). Finally for any $t \leq T$, we have:
    \begin{eqnarray*}
        Y_t^{n+1}(a) &=& \esssup_{\tau \in {\cal T}_0} \E[\exp\{\int_t^{\tau} g(s,X_s+a)ds\} O_{\tau}^{n+1} (a)|\F_t]\\
        &\leq& \E[e^{\int_t^\t\g_sds}\E[e^{\int_\t^T\g_sds}|\cf_\t]|\cf_t]=\E[e^{\int_t^T\g_sds|}\cf_t].
\end{eqnarray*}
Thus the property is valid for $n+1$ and then it is valid for any $n\ge 0$.

We now focus on point (ii).
For any $t\in [T-\d,T]$ and any $a\in \rl$,  
\begin{eqnarray*}\barl
    O_t^n(a)=\E[ \exp\{\int_t^{T} g(s,X_s+a)ds\}|\F_t] \ear
\end{eqnarray*}
and 
\begin{align*}\barl
    Y^n_t(a)=
    \esssup_{\tau \in {\cal T}_t} \E[\exp\{\int_t^{\tau} g(s,X_s+a)ds\} O_{\tau}^{n} (a)|\F_t]=\E[\exp\{\int_t^{T} g(s,X_s+a)ds\} |\F_t]=O_t^n(a).
    \ear
\end{align*}
\end{proof}

Next for  every $n \in \N$,  and $\xi$ a random variable  which takes its values in 
\def \ss {\mathcal S_c^2}
$\mathcal{V}$ (once more which is finite), let us set: 
\begin{equation}\lb{ytxiexp}
   Y_t^n(\xi) = \sum_{a \in \mathcal V}  Y_t^n(a) \xa 
\text{ and }O_t^n(\xi) = \sum_{a \in \mathcal V}  O_t^n(a) \xa,\, t\le T.
\end{equation}
As a by-product of Proposition \ref{propexp}, we have: 
\begin{Corollary} \label{corexp} Let $\t$ be a stopping time and $\xi$ an $\cf_\t$-random variable valued in $\mathcal{V}$, then for any $n\ge 0$ and $t\in [\t,T]$, on has:
$$
0\le Y_t^0(\xi) \leq Y_t^{n}(\xi)\leq \E[e^{\int_\t^{T} \g_sds}|\F_t],\, \P-a.s.
$$
Moreover if $\t\ge T-\d$, then $Y_t^{n}(\xi) = O_t^{n}(\xi)$ for any $t\in [\t,T]$ and $n\ge 1$.\qed
\end{Corollary}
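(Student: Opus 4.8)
The plan is to deduce every assertion from the corresponding statement of Proposition \ref{propexp} for the deterministic shifts $a\in\cv$, and then to transfer these bounds to the random shift $\xi$ by exploiting the fact that $\xi$ takes only finitely many values in $\cv$ and is $\cf_\t$-measurable. Since $\xi$ is $\cv$-valued, the events $\{[\xi=a]\}_{a\in\cv}$ form a finite partition of $\Omega$, so by the very definition \eqref{ytxiexp} one has, on each atom $[\xi=a]$ and for $t\ge\t$, that $Y_t^n(\xi)=Y_t^n(a)$ and $O_t^n(\xi)=O_t^n(a)$, $\P$-a.s. Consequently any $\P$-a.s. (in)equality valid for all the processes $Y^n(a)$, $a\in\cv$, transfers verbatim to $Y^n(\xi)$, and likewise for $O^n(\xi)$.

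First I would treat the chain $0\le Y_t^0(\xi)\le Y_t^n(\xi)\le \E[e^{\int_\t^T\g_sds}|\cf_t]$. Positivity is immediate from \eqref{y0}, since the exponential is non-negative and hence $Y_t^0(a)\ge 0$ for every $a$. For the middle inequality I would use the Snell-envelope representation of $Y^n(a)$ for $n\ge 1$: the constant stopping time $\nu\equiv T$ belongs to $\Tc_t$ and $O_T^n(a)=1$ by \eqref{onexp}, so inserting $\nu=T$ into the essential supremum yields $Y_t^n(a)\ge \E[e^{\int_t^T g(s,X_s+a)ds}|\cf_t]=Y_t^0(a)$; the case $n=0$ is trivial. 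For the upper bound, Proposition \ref{propexp}(i) gives $Y_t^n(a)\le \E[e^{\int_t^T\g_sds}|\cf_t]$ for each $a$, and since $\g\ge 0$ and $t\ge\t$ we have $\int_t^T\g_sds\le\int_\t^T\g_sds$, whence $\E[e^{\int_t^T\g_sds}|\cf_t]\le\E[e^{\int_\t^T\g_sds}|\cf_t]$. Passing to $\xi$ atom-by-atom as above closes the three bounds.

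For the final claim, I would note that $\t\ge T-\d$ forces $[\t,T]\subset[T-\d,T]$, so Proposition \ref{propexp}(ii) gives $Y_t^n(a)=O_t^n(a)$ for every $a\in\cv$, every $t\in[\t,T]$ and every $n\ge 1$; the partition argument then yields $Y_t^n(\xi)=O_t^n(\xi)$ on $[\t,T]$. The only genuinely delicate point is the passage from the deterministic family $(Y^n(a))_{a\in\cv}$ to the random object $Y^n(\xi)$: one must check that the $\cf_\t$-measurable indicators $\1_{[\xi=a]}$ may be handled without disturbing the conditional expectations, which is precisely guaranteed by the finiteness of $\cv$ together with the measurability required in point (b) of the admissibility definition. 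Everything else is a direct quotation of Proposition \ref{propexp}, so I expect no serious obstacle beyond this routine measurability bookkeeping.
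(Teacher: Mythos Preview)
Your argument is correct and is precisely the routine ``by-product of Proposition \ref{propexp}'' that the paper intends (the paper offers no separate proof, only the \qed). You even supply the one detail the paper leaves implicit, namely $Y^0_t(a)\le Y^n_t(a)$ via the choice $\tau=T$ in the Snell-envelope representation, and your atom-by-atom transfer to $\xi$ is the standard device for passing from deterministic to finitely-valued random shifts.
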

\subsubsection{The optimal strategy}
Let $n\ge 1$ be fixed. We will construct a strategy ${\tilde{\delta}}^n = {(\tilde{\tau}_k^n,\tilde{\beta}_k^n)}_{k=1,...,n}$ that will be proved in the sequel, to be optimal in $\ca_n$. So first let 
$$
\tilde{\tau}_1^n = \inf\{s \geq 0,\,\,Y_s^n(0)=O_s^n(0)\}.
$$
For $l\in \ci $ let, 
\begin{eqnarray*}
\Lambda^{1,n}_l&= \bigcap_{ 1\leq j \leq p,\, j\neq l} \Big\{\exp(\int_{\tilde{\tau}_1^n}^{\tilde{\tau}_1^n+\Delta} &g(s,X_s)ds-\psi(a_l)\1_{[\tilde{\tau}_1^n<T-\Delta]}) Y_{\tilde{\tau}_1^n+\Delta}^{n-1}(a_l) \\{ } && \geq  \exp(\int_{\tilde{\tau}_1^n}^{\tilde{\tau}_1^n+\Delta} g(s,X_s)ds-\psi(a_j)\1_{[\tilde{\tau}_1^n<T-\Delta]}) Y_{\tilde{\tau}_1^n+\Delta}^{n-1}(a_j) \Big\};
\end{eqnarray*}
\def \txt{\text}
 \begin{eqnarray}\tbar{\Lambda}^{1,n}_1 =\Lambda^{1,n}_1 \txt{ and for }l\in \{ 2,\dots, p\}, \tbar{\Lambda}^{1,n}_l=\Lambda^{1,n}_l\setminus [\Lambda^{1,n}_1 \cup \ldots \Lambda^{1,n}_{l-1}], 
 \end{eqnarray}
and finally
$$
\tilde{\beta}_1^n = \sum_{1\leq l \leq p} a_l \1_{\tbar{\Lambda}^{1,n}_l}.$$
The random variable $\tilde{\beta}^n_1$ is $\F_{\tilde{\tau}_1^n + \Delta}-$measurable and verifies:
{\small{
\begin{eqnarray*}
\max_{a \in U} \{\exp(\int_{\tilde{\tau}_1^n}^{\tilde{\tau}_1^n+\Delta} g(s,X_s)ds-\psi(a)\1_{[\tilde{\tau}_1^n<T-\Delta]}) Y_{\tilde{\tau}_1^n+\Delta}^{n-1}(a) \}
 = \exp(\int_{\tilde{\tau}_1^n}^{\tilde{\tau}_1^n+\Delta} g(s,X_s)ds-\psi(\tilde{\beta}^n_1)\1_{[\tilde{\tau}_1^n<T-\Delta]}) Y_{\tilde{\tau}_1^n+\Delta}^{n-1}(\tilde{\beta}_1^n).
 \end{eqnarray*}}}
Next for $k \in \{2,...,n\}$, let $G_{k-1}^n = \{\tau_{k-1}^n \leq T-2\Delta\} \in \F_{\tau_{k-1}^n}$ and, 
 $$
 \gs_k^n = \inf\{s \geq \tilde{\tau}^n_{k-1}+\Delta,\, Y_s^{n-k}(\tilde{\beta}_1^n+\ldots+\tilde{\beta}_{k-1}^n) = O_s^{n-k} (\tilde{\beta}_1^n+\ldots+\tilde{\beta}_{k-1}^n)\},
 $$
 $$\tilde{\tau}_{k}^{n}= \bar{\gamma}_k^n \1_{G_{k-1}^n} + T \1_{\overline{G_{k-1}^n}}$$
  and finally
 $$
\tilde{\beta}_k^n = \1_{G_{k-1}^n} \big(\sum_{1\leq l \leq p} a_l \1_{\tbar{\Lambda}^{k,n}_l}\big)+a_1 \1_{\overline{G_{k-1}^n}},$$ with
\begin{eqnarray*}
\Lambda^{k,n}_l&= \bigcap_{ 1\leq j \leq p} \Big\{&\exp(\int_{\tilde{\tau}_k^n}^{\tilde{\tau}_k^n+\Delta} g(s,X_s+\tilde{\beta}_1^n+\ldots+\tilde{\beta}_{k-1}^n)ds-\psi(a_l)\1_{[\tilde{\tau}_k^n<T-\Delta]}) Y_{\tilde{\tau}_k^n+\Delta}^{n-1}(a_l) \\{ } && \geq  \exp(\int_{\tilde{\tau}_k^n}^{\tilde{\tau}_k^n+\Delta} g(s,X_s+\tilde{\beta}_1^n+\ldots+\tilde{\beta}_{k-1}^n)ds-\psi(a_j)\1_{[\tilde{\tau}_k^n<T-\Delta]}) Y_{\tilde{\tau}_1^n+\Delta}^{n-1}(a_j) \Big\};
\end{eqnarray*}
\def \txt{\text}
 \begin{equation*}
 \tbar{\Lambda}^{k,n}_1 =\Lambda^{k,n}_1 \txt{ and for }l\in \{2,...,p\}, \tbar{\Lambda}^{k,n}_l=\Lambda^{k,n}_l\setminus [\Lambda^{k,n}_1 \cup \ldots \Lambda^{k,n}_{l-1}]. 
 \end{equation*}
 \begin{Theorem}\label{theorem-optimal}
     The strategy $\tilde{\delta}^n={(\tilde{\tau}_k^n,\tilde{\beta}_k^n)}_{k= 1,...,n}$ is optimal in $\ca_n$ for the risk-sensitive impulse control problem.
 \end{Theorem}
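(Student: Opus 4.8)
The plan is to reproduce, in the multiplicative setting induced by the exponential utility, the two-step argument used for the risk-neutral problem in Proposition \ref{optimal strategy}, with the Snell envelope representation of $Y^n$ in place of the reflected BSDE. First I would check that $\tilde\delta^n$ is admissible. By Corollary \ref{corexp} one has $Y^{n-k+1}=O^{n-k+1}$ on $[T-\Delta,T]$, so each contact time satisfies $\bar\gamma_k^n\1_{G_{k-1}^n}\le T-\Delta$; combined with the definition $\tilde\tau_k^n=\bar\gamma_k^n\1_{G_{k-1}^n}+T\1_{\overline{G_{k-1}^n}}$ this gives $\min\{T,\tilde\tau_k^n+\Delta\}\le\tilde\tau_{k+1}^n\le T$, exactly as in Step 1 of Proposition \ref{optimal strategy}, while each $\tilde\beta_k^n$ is $\F_{\tilde\tau_k^n+\Delta}$-measurable since the sets $\tilde\Lambda_l^{k,n}$ are.

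For Step 1 I would show $Y_0^n(0)=J(\tilde\delta^n)$ by unwinding $Y_0^n(0)$ recursively. By Proposition \ref{propexp} the process $(\exp\{\int_0^tg(s,X_s)ds\}Y_t^n(0))_{t\le T}$ is the Snell envelope of a class-[D] obstacle that is continuous off $T-\Delta$ with a nonnegative jump there; since $\tilde\tau_1^n\le T-\Delta$ by Corollary \ref{corexp}, the first contact time lies in the continuity region and the optimal-stopping identity yields $Y_0^n(0)=\E[\exp\{\int_0^{\tilde\tau_1^n}g(s,X_s)ds\}O_{\tilde\tau_1^n}^n(0)]$. Expanding $O^n$ through \eqref{onexp}, using that $\tilde\beta_1^n$ realizes the maximum therein (the common factor $\exp\{\int_{\tilde\tau_1^n}^{\tilde\tau_1^n+\Delta}g\}$ is irrelevant to the argmax, so the $\tilde\Lambda_l^{1,n}$ do select the maximizer), and pulling the $\F_{\tilde\tau_1^n}$-measurable factor $\exp\{\int_0^{\tilde\tau_1^n}g\}$ inside, the tower property gives
\[
Y_0^n(0)=\E\Big[\exp\Big\{\int_0^{(\tilde\tau_1^n+\Delta)\wedge T}g(s,X_s)\,ds-\psi(\tilde\beta_1^n)\1_{[\tilde\tau_1^n<T-\Delta]}\Big\}\,Y_{\tilde\tau_1^n+\Delta}^{n-1}(\tilde\beta_1^n)\Big].
\]
Applying the same step to $Y^{n-1}(\tilde\beta_1^n)$ at $\tilde\tau_2^n$ and splitting on $G_1^n,\overline{G_1^n}$ as in the neutral case (on $\overline{G_1^n}$ the terminal identity of Corollary \ref{corexp} applies and no further cost is paid), then iterating down to $Y^0$ and invoking $Y_t^0(a)=\E[\exp\{\int_t^Tg(s,X_s+a)ds\}|\F_t]$, the exponential factors telescope: the $\exp\{\int g\}$ pieces over the intervals $[\tilde\tau_k^n+\Delta,\tilde\tau_{k+1}^n+\Delta]$ glue into $\exp\{\int_0^Tg(s,X_s^{\tilde\delta^n})ds\}$ and the $\exp\{-\psi\}$ pieces into $\exp\{-\sum_k\psi(\tilde\beta_k^n)\1_{[\tilde\tau_k^n<T-\Delta]}\}$, so that $Y_0^n(0)=J(\tilde\delta^n)$.

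For Step 2, given an arbitrary $\bar\delta_n=(\bar\tau_k^n,\bar\beta_k^n)_k\in\mathcal A_n$, I would use the supermartingale side of the Snell envelope: for every stopping time $\tau$, $Y_0^n(0)\ge\E[\exp\{\int_0^\tau g(s,X_s)ds\}O_\tau^n(0)]$. Taking $\tau=\bar\tau_1^n$ and bounding $O^n_{\bar\tau_1^n}(0)$ from below by the single term attached to the admissible (but generally non-maximizing) choice $\bar\beta_1^n$, then repeatedly combining $Y^{n-k+1}\ge O^{n-k+1}$ with the supermartingale property of $(\exp\{\int_0^tg\}Y^{n-k+1})$ at $\bar\tau_{k+1}^n$, I would descend to $Y^0$ and obtain $Y_0^n(0)\ge J(\bar\delta_n)$. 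Since $\tilde\delta^n\in\mathcal A_n$, combining the two steps yields $J(\tilde\delta^n)=Y_0^n(0)=\sup_{\delta\in\mathcal A_n}J(\delta)$, i.e. the optimality of $\tilde\delta^n$.

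The main obstacle is the multiplicative bookkeeping across the iteration. Unlike the additive risk-neutral computation, where linearity lets the sums come out of the conditional expectations, here one must repeatedly extract the $\F_{\tilde\tau_k^n+\Delta}$-measurable factor $\exp\{\int_0^{\tilde\tau_k^n+\Delta}g\}$ through the nested conditional expectations and verify that the whole family telescopes into one exponential of the cumulative reward, all while tracking the horizon truncation through $\1_{G_k^n},\1_{\overline{G_k^n}}$ and the boundary cases $\bar\tau_k^n\in[T-\Delta,T]$, where Corollary \ref{corexp} guarantees $Y^{n-k+1}=O^{n-k+1}$ and hence that no spurious impulse cost is created.
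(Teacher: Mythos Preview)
Your proposal is correct and follows essentially the same route as the paper: admissibility via Corollary~\ref{corexp}, then Step~1 unwinding $Y_0^n(0)$ through the optimal-stopping identity of Proposition~\ref{cont-opt} at each $\tilde\tau_k^n$ with the $G_k^n/\overline{G_k^n}$ split, telescoping the multiplicative factors down to $J(\tilde\delta^n)$, and Step~2 using the Snell-envelope inequality at each $\bar\tau_k^n$ together with the suboptimal choice $\bar\beta_k^n$ in~\eqref{onexp}. The only cosmetic difference is that the paper writes $\tilde\tau_1^n+\Delta$ rather than $(\tilde\tau_1^n+\Delta)\wedge T$ (since $\tilde\tau_1^n\le T-\Delta$ always) and treats the boundary case $\tilde\tau_k^n=T-\Delta$ explicitly before collapsing it via $Y_T^{n-k}=1$, whereas you absorb it directly into the indicator $\1_{[\tilde\tau_k^n<T-\Delta]}$.
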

 \begin{proof} To begin with, let us show that $\tilde{\delta}^n={(\tilde{\tau}_k^n,\tilde{\beta}_k^n)}_{k=1,...,n}$ is admissible. First note that $\tilde \t_1\le T-\d$. Next for any $k=1,...,n-1$, on $G^n_k$, $\bar \gamma^n_{k+1} \le T-\d$, then it follows from its definition that $\tilde \t^n_{k+1}=\bar\gamma^n_{k+1}\le T$. Now if $\omega \in \bar G^n_{k}$, 
$\tilde \t^n_{k+1}(\omega)=T$. Thus 
$\tilde \t^n_{k+1}(\omega)\le T$, $\P$-a.s.
Next on $G^n_k$, 
$\tilde \t^n_{k}+\d\le \tilde \t^n_{k+1}=\bar\g^n_{k+1}\le T-\d$ which implies that on $G^n_k$, 
$\tilde \t^n_{k+1}\ge \min\{\tilde \t^n_{k}+\d, T\}$ while on  $\bar G^n_k$,
$\tilde \t^n_{k+1}=T$. It follows that 
$T\ge \tilde \t^n_{k+1}\ge \min\{\tilde \t^n_{k+1}+\d, T\}$, $\P$-a.s. Finally for any $k\in \{1,\ldots,n\}$, the sets $(\tbar{\Lambda}^{k,n}_l)_{l=1,...,p}\in \F_{\tilde{\tau}_k^n + \d}$, then $\tilde{\beta}_k^n$ is $\F_{\tilde{\tau}_k^n + \d}$, thus the strategy $\dl^n$ is  admissible.

Now, since $   {( Y_t^1(0) \exp\{\int_0^{t} g(s,X_s))}_{t \leq T}$ is the Snell envelope of the process\\ ${(\exp\{\int_0^{t} g(s,X_s)ds\} O_{t}^1 (0))}_{t \leq T}$ and the latter is continuous with a possible non-negative jump at $T-\Delta$, then $\tilde{\tau}_1^n$ is an optimal stopping time after $0$ and we have:
  \begin{eqnarray}\label{y0exp}
   Y_0^n(0) &=& \E[\exp\{\int_0^{\tilde{\tau}_1^n} g(s,X_s)ds\} O_{\tilde{\tau}_1^n}^n (0)] \nonumber  \\ &=& \E[\exp\{\int_0^{\tilde{\tau}_1^n} g(s,X_s)ds\} \{\1_{[\tilde{\tau}_1^n<T-\Delta]}\E[\exp(\int_{\tilde{\tau}_1^n}^{\tilde{\tau}_1^n+\Delta} g(s,X_s)ds-\psi(\tilde{\beta}_1^n)\1_{[\tilde{\tau}_1^n<T-\Delta]}) \nonumber \\ & &Y_{\tilde{\tau}_1^n+\Delta}^{n-1}(\tilde{\beta}_1^n)|\F_{\tilde{\tau}_1^n}] + \1_{[ \tilde{\tau}_1^n=T-\Delta]}\E[\exp[\int_{\tilde{\tau}_1^n}^{T} g(s,X_s)ds]|\F_{\tilde{\tau}_1^n}]\}] \nonumber \\ &=& \E[\1_{[\tilde{\tau}_1^n<T-\Delta]}\exp\{\int_0^{\tilde{\tau}_1^n+\Delta} g(s,X_s)ds-\psi(\tilde{\beta}_1^n)\1_{[\tilde{\tau}_1^n<T-\Delta]}\} Y_{\tilde{\tau}_1^n+\Delta}^{n-1}(\tilde{\beta}_1^n) \nonumber\\ & +&  \1_{[\tilde{\tau}_1^n =T-\d]} \exp\{\int_{0}^{T} g(s,X_s)ds\}]\nonumber\\ &=& \E[ \exp\{\int_0^{\tilde{\tau}_1^n+\Delta} g(s,X_s)ds-\psi(\tilde{\beta}_1^n)\1_{[\tilde{\tau}_1^n<T-\Delta]}\} Y_{\tilde{\tau}_1^n+\Delta}^{n-1}(\tilde{\beta}_1^n) ].
 \end{eqnarray}
 Now, notice that according to Corollary \ref{corexp}, for $ t \in [T-\Delta,  T]$, 
{{ \begin{eqnarray*}
\1_{\overline{G_1^n}} Y_{\tilde{\tau}_1^n+\Delta}^{n-1}(\tilde{\beta}_1^n) &=& \1_{\overline{G_1^n}}\E[\exp\{\Int_{\tilde{\tau}_1^n + \d}^{T} g(s,X_s+\tilde{\beta}_1^n)ds\} |\F_{\tilde{\tau}_1^n + \d}].
\end{eqnarray*} }}
On the other hand,
\begin{eqnarray*}
\1_{{G_1^n}} Y_{\tilde{\tau}_1^n+\Delta}^{n-1}(\tilde{\beta}_1^n) &=& \1_{{G_1^n}}\esssup_{\tau \in {\cal T}_{\tilde{\tau}_1^n + \d}} \E[\exp\{\int_{\tilde{\tau}_1^n + \d}^{\tau} g(s,X_s+\tilde{\beta}_1^n)ds\} O_{\tau}^{n-1}(\tilde{\beta}_1^n)|\F_{\tilde{\tau}_1^n + \d}]\\&=& \1_{{G_1^n}}  \E[\exp\{\int_{\tilde{\tau}_1^n + \d}^{\bar{\gamma}_2^n} g(s,X_s+\tilde{\beta}_1^n)ds\} O_{\bar{\gamma}_2^n}^{n-1}(\tilde{\beta}_1^n)|\F_{\tilde{\tau}_1^n + \d}].
\end{eqnarray*}
But  on ${G_1^n}$, $\tilde{\tau}_2^n = \bar{\gamma}_2^n  \le T-\Delta$, then
{\small{\begin{eqnarray*}
\1_{{G_1^n}} Y_{\tilde{\tau}_1^n+\Delta}^{n-1}(\tilde{\beta}_1^n) &=&\1_{{G_1^n}}  \E[\exp\{\int_{\tilde \tau_1^n + \d}^{\tilde{\tau}_2^n} g(s,X_s+\tilde{\beta}_1^n)ds\} \Big\{\1_{[\tilde{\tau}_2^n < T-\d]}. \\& &\E[\max_{\beta \in U} \{\exp[\int_{\tilde{\tau}_2^n}^{\tilde{\tau}_2^n+\d} g(s,X_s+\tilde{\beta}_1^n)ds-\psi(\beta) \1_{[{\tilde{\tau}_2^n}<T-\Delta]}]\, Y_{{\tilde{\tau}_2^n}+\Delta}^{n-2}(\tilde{\beta}_1^n+\beta) \}|\F_{\tilde{\tau}_2^n}]\\&{}&\qq+\1_{[{\tilde{\tau}_2^n} =T-\Delta ]}\E[ \exp\{\int_{\tilde{\tau}_2^n}^{T} g(s,X_s+\tilde{\beta}_1^n)ds\}|\F_{\tilde{\tau}_2^n}]\Big\} |\F_{\tilde{\tau}_1^n + \d}].
\end{eqnarray*}}}
Since $G_1^n$ is $\F_{\tilde{\tau}_1^n +\d}$-measurable, we obtain
 \begin{eqnarray*}
\1_{{G_1^n}} Y_{\tilde{\tau}_1^n+\Delta}^{n-1}(\tilde{\beta}_1^n) &=&\E[\1_{{G_1^n\cap [{\tilde{\tau}_2^n} <T-\Delta ]}} \exp\{\int_{\tilde{\tau}_1^n + \d}^{\tilde{\tau}_2^n+\d} g(s,X_s+\tilde{\beta}_1^n)ds-\psi(\tilde{\beta}_2^n) \1_{[{\tilde{\tau}_2^n}<T-\Delta]}\}\, Y_{{\tilde{\tau}_2^n}+\Delta}^{n-2}(\tilde{\beta}_1^n+\tilde{\beta}_2^n) 
\\&+&\1_{G_1^n \cap [{\tilde{\tau}_2^n} =T-\Delta ]} \exp\{\int_{\tilde{\tau}_1^n + \d}^{T} g(s,X_s+\tilde{\beta}_1^n)ds\}|\F_{\tilde{\tau}_1^n + \d}]
\end{eqnarray*}
Therefore
{\small{\begin{eqnarray*}
 Y_{\tilde{\tau}_1^n+\Delta}^{n-1}(\tilde{\beta}_1^n) &=& \E[ \1_{{G_1^n}\cap [{\tilde{\tau}_2^n} <T-\Delta ]} \exp\{\int_{\tilde{\tau}_1^n + \d}^{\tilde{\tau}_2^n+\d} g(s,X_s+\tilde{\beta}_1^n)ds-\psi(\tilde{\beta}_2^n) \1_{[{\tilde{\tau}_2^n}<T-\Delta]}\}\, Y_{{\tilde{\tau}_2^n}+\Delta}^{n-2}(\tilde{\beta}_1^n+\tilde{\beta}_2^n) 
\\&+&\1_{G_1^n \cap [{\tilde{\tau}_2^n} =T-\Delta ]} \exp\{\int_{\tilde{\tau}_1^n + \d}^{T} g(s,X_s+\tilde{\beta}_1^n)ds\} + \1_{\bar{G}_1^n} \exp\{\Int_{\tilde \tau_1^n + \d}^{T} g(s,X_s+\tilde{\beta}_1^n)ds\} |\F_{\tilde{\tau}_1^n + \d}].
\end{eqnarray*}}}
Replace now $Y_{\tilde{\tau}_1^n+\Delta}^{n-1}(\tilde{\beta}_1^n)$ with its last expression in (\ref{y0exp}) to obtain
 \def \og{\overline{G}}
{\small{ \begin{eqnarray*}
    Y_0^n(0) &=&    \E[   \1_{{G_1^n}\cap [{\tilde{\tau}_2^n} <T-\Delta ]} \exp\{ \int_0^{\tilde{\tau}_1^n+\Delta} g(s,X_s)ds +\int_{\tilde{\tau}_1^n + \d}^{\tilde{\tau}_2^n+\d} g(s,X_s+\tilde{\beta}_1^n)ds-\psi(\tilde{\beta}_1^n)\1_{[\tilde{\tau}_1^n<T-\Delta]}\\&-&\psi(\tilde{\beta}_2^n) \1_{[{\tilde{\tau}_2^n}<T-\Delta]}\} Y_{{\tilde{\tau}_2^n}+\Delta}^{n-2}(\tilde{\beta}_1^n+\tilde{\beta}_2^n) 
+\1_{G_1^n \cap [{\tilde{\tau}_2^n} =T-\Delta ]} \exp\{\int_0^{\tilde{\tau}_1^n+\Delta} g(s,X_s)ds \\&+&\int_{\tilde{\tau}_1^n + \d}^{T} g(s,X_s+\tilde{\beta}_1^n)ds-\psi(\tilde{\beta}_1^n)\1_{[\tilde{\tau}_1^n<T-\Delta]}\}\\& +& \1_{\overline{G_1^n}} \exp\{\int_0^{\tilde{\tau}_1^n+\Delta} g(s,X_s)ds +\Int_{\tilde{\tau}_1^n + \d}^{T} g(s,X_s+\tilde{\beta}_1^n)ds-\psi(\tilde{\beta}_1^n)\1_{[\tilde{\tau}_1^n<T-\Delta]}\}   ]\\
&=& \E[   \1_{{G_1^n}} \exp\{ \int_0^{\tilde{\tau}_2^n+\d} g(s,X_s^{\tilde\delta^n})ds-\psi(\tilde{\beta}_1^n)\1_{[\tilde{\tau}_1^n<T-\Delta]}-\psi(\tilde{\beta}_2^n) \1_{[{\tilde{\tau}_2^n}<T-\Delta]}\} Y_{{\tilde{\tau}_2^n}+\Delta}^{n-2}(\tilde{\beta}_1^n+\tilde{\beta}_2^n) 
\\ &+& \1_{\overline{G_1^n}} \exp\{\int_0^T g(s,X_s^{\tilde \delta^n})ds-\psi(\tilde{\beta}_1^n)\1_{[\tilde{\tau}_1^n<T-\Delta]}\}]
 \end{eqnarray*}}}
 By iterating the same reasoning as many times as necessary, we obtain
 \begin{eqnarray}
   Y_0^n(0) &=&  \E[   \1_{{G_{n-1}^n}} \exp\{ \int_0^{\tilde{\tau}_n^n+\d} g(s,X_s^{\tilde\delta^n})ds-\sum_{k=1}^n \psi(\tilde{\beta}_k^n)\1_{[\tilde{\tau}_k^n<T-\Delta]}\} Y_{{\tilde{\tau}_n^n}+\Delta}^{0}(\tilde{\beta}_1^n+\ldots+\tilde{\beta}_n^n) \nonumber \\
   &+& \sum_{k=1}^{n-2} \1_{{G_{k}^n}\cap \overline{G_{k+1}^n}} \exp\{ \int_0^{T} g(s,X_s^{\tilde\delta^n})ds-\sum_{i=1}^{k+1} \psi(\tilde{\beta}_i^n)\1_{[\tilde{\tau}_i^n<T-\Delta]}\}
 \nonumber \\ &+& \1_{\overline{G_1^n}} \exp\{\int_0^{T} g(s,X_s^{\tilde\delta^n})ds-\psi(\tilde{\beta}_1^n)\1_{[\tilde{\tau}_1^n<T-\Delta]}\}   ].\label{y0nrisk}
 \end{eqnarray}
Next, let us set $\tilde{\beta}^n=\tilde{\beta}_1^n+\ldots +\tilde{\beta}_{n}^n$. By virtue of (\ref{ytxiexp}) and (\ref{y0}) and since $\tilde{\beta}^n$ is $\F_{\tilde{\tau}^n_{n}+\Delta}-$measurable, we have
\begin{eqnarray*} 
Y_{\tilde{\tau}_{n}^n+\Delta}^{0}(\tilde{\beta}^n) &=& \sum _{a \in U} Y^0_{\tilde{\tau}_{n}^n+\Delta}(\tilde{\beta}_1^n+\ldots \tilde{\beta}_{n-1}^n+a) \1_{[\tilde{\beta}_{n}^n=a]}\\&=&    \sum _{a \in U} \E[\exp\{\int_{\tilde{\tau}_{n}^n+\Delta}^T g(X_s + \tilde{\beta}_1^n+\ldots \tilde{\beta}_{n-1}^n+a)ds\}|\F_{\tilde{\tau}_{n}^n+\Delta}] \1_{[\tilde{\beta}_{n}^n=a]}  \\&=& \sum _{a \in U} \E[\1_{[\tilde{\beta}_{n-1}^n=a]} \exp\{\int_{\tilde{\tau}_{n}^n+\Delta}^T g(X_s + \tilde{\beta}_1^n+\ldots \tilde{\beta}_{n-1}^n+a)ds\}|\F_{\tilde{\tau}_{n}^n+\Delta}]\\&=&
 \E[ \exp\{\int_{\tilde{\tau}_{n}^n+\Delta}^T g(X_s + \tilde{\beta}^n)ds\}|\F_{\tilde{\tau}_{n}^n+\Delta}].
\end{eqnarray*}
The last inequality plugged in (\ref{y0nrisk}) yields
$$
Y_0^n(0)=\E[ \exp \{\int_0^T g(s,X_s^{\tbar{\delta}^n}) ds-\sum_{k\leq n} \psi(\tilde{\beta}_k^n) \1_{[\ts_k^n < T-\Delta]}\}] = J(\tilde{\delta^n}). 
$$
Now, let $\bar{\delta}' = {(\bt_k,\bb_k)}_{k=1,...,n}$ be an arbitrary strategy of $\A_n$. We then have: 
\begin{eqnarray*}
 Y_0^n(0) &=& \esssup_{\tau \in {\cal T}_0} \E[\exp\{\int_0^{\tau} g(s,X_s)ds\} O_{\tau}^n (0)]\\&\geq& \E[\exp\{\int_0^{\bt_1} g(s,X_s)ds\} O_{\bt_1}^n (0)].
\end{eqnarray*}
But
\begin{eqnarray*}
     O_{\bt_1}^n (0) &=& \1_{[\bt_1<T-\Delta]}\E[\max_{\beta \in U} \{\exp(\int_{\bt_1}^{\bt_1+\Delta} g(s,X_s)ds-\psi(\beta) \1_{[\bt_1<T-\Delta]}) Y_{\bt_1+\Delta}^{n-1}(\beta) \}|\F_{\bt_1}]\\&+&\1_{[T-\Delta\leq \bt_1 ]}\E[ \exp\{\int_{\bt_1}^{T} g(s,X_s)ds\}|\F_{\bt_1}]
     \\ &\geq& \1_{[\bt_1<T-\Delta]}\E[ \{\exp(\int_{\bt_1}^{\bt_1+\Delta} g(s,X_s)ds-\psi(\bb_1) \1_{[\bt_1<T-\Delta]}) Y_{\bt_1+\Delta}^{n-1}(\bb_1) \}|\F_{\bt_1}]\\&+&\1_{[T-\Delta\leq \bt_1 ]}\E[ \exp\{\int_{\bt_1}^{T} g(s,X_s)ds\}|\F_{\bt_1}].
\end{eqnarray*}
It follows that
{\small{\begin{eqnarray*}
 Y_0^n(0) &\geq& \E[\exp\{\int_0^{\bt_1} g(s,X_s)ds\} \{\1_{[\bt_1<T-\Delta]}\E[ \{\exp(\int_{\bt_1}^{\bt_1+\Delta} g(s,X_s)ds-\psi(\bb_1) \1_{[\bt_1<T-\Delta]}) Y_{\bt_1+\Delta}^{n-1}(\bb_1) \}|\F_{\bt_1}]\\&+&\1_{[T-\Delta\leq \bt_1 ]}\E[ \exp\{\int_{\bt_1}^{T} g(s,X_s)ds\}|\F_{\bt_1}]\}]\\
 &\geq& \E[\1_{[\bt_1<T-\Delta]} \exp\{\int_0^{\bt_1+\Delta} g(s,X_s)ds-\psi(\bb_1) \1_{[\bt_1<T-\Delta]})\} Y_{\bt_1+\Delta}^{n-1}(\bb_1) +\1_{[T-\Delta\leq \bt_1]}\exp\{\int_{0}^{T} g(s,X_s)ds\}]\\
 &\geq& \E[ \exp\{\int_0^{(\bt_1+\Delta)\wedge T} g(s,X_s)ds-\psi(\bb_1) \1_{[\bt_1<T-\Delta]})\} Y_{(\bt_1+\Delta)\wedge T}^{n-1}(\bb_1) ]
\end{eqnarray*}}}since $Y_{T}^{n-1}(\bb_1)=1$.
By reasoning many times in the same way, we obtain:
\begin{eqnarray*}
 Y_0^n(0)  &\geq& \E[\1_{[\bt_{n}<T-\Delta]} \exp\{\int_0^{(\bt_{n}+\Delta) \wedge T} g(s,X_s^{\bar{{\delta'}}})ds-\sum _{k=1}^{n}\psi(\bb_k)\1_{[\bt_k<T-\Delta]})\} Y_{(\bt_{n}+\Delta)\wedge T}^{0}(\bb_1+\ldots +\bb_{n}).
\end{eqnarray*}
Finally,  using (\ref{y0}) to get
\begin{eqnarray*}
 Y_0^n(0)  \geq \E[ \exp \{\int_0^T g(s,X_s^{\bar{\delta'}}) ds-\sum_{k\leq n} \psi(\bb_k) \1_{[\bt_k < T-\Delta]}\}] = J(\bar{\delta'})
\end{eqnarray*}and then the strategy 
$\tilde \dl^n$ is optimal in $\ca_n$. 
  \end{proof}
  As a by-product, in taking $n=\mathfrak{X}=[\frac{T}{\d}]$, we obtain:
\begin{Corollary}
The risk-sensitive impulse control problem with delay $\d$ \eqref{expcout} has an optimal strategy.    
\end{Corollary}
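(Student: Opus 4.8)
The plan is to reduce the Corollary to the optimality of $\tilde{\delta}^n$ over $\ca_n$ already established in Theorem \ref{theorem-optimal}, combined with a risk-sensitive counterpart of Proposition \ref{sup}. Taking $n=\mathfrak{X}=[\frac{T}{\d}]$ in Theorem \ref{theorem-optimal} produces a strategy $\tilde{\delta}^{\mathfrak{X}}\in \ca_{\mathfrak{X}}$ which is optimal within $\ca_{\mathfrak{X}}$, that is $J(\tilde{\delta}^{\mathfrak{X}})=\Sup_{\delta\in \ca_{\mathfrak{X}}}J(\delta)$; it then remains only to upgrade this optimality to the whole set $\ca$.

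First I would record the analogue of Proposition \ref{sup} for the payoff \eqref{expcout}, namely $\Sup_{\delta\in \ca}J(\delta)=\Sup_{\delta\in \ca_{\mathfrak{X}}}J(\delta)$. The inequality $\Sup_{\delta\in \ca}J(\delta)\ge \Sup_{\delta\in \ca_{\mathfrak{X}}}J(\delta)$ is immediate since $\ca_{\mathfrak{X}}\subset \ca$. For the reverse one I would argue exactly as in Proposition \ref{sup}, the key observation being that that argument uses only the structure of admissible strategies and the shape of the payoff, and not the particular utility function. Concretely, admissibility forces $\t_{k+1}\ge \min\{T,\t_k+\d\}$, hence $\t_{k+1}\ge \t_k+\d$ as long as $\t_k<T-\d$; consequently at most $\mathfrak{X}$ of the intervention times can lie strictly below $T-\d$. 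Moreover every impulse with $\t_n\ge T-\d$ is irrelevant, both for the controlled dynamics $X^{\delta}$ through \eqref{xd} and for the cost term $\sum_n \psi(\xi_n)\1_{[\t_n<T-\d]}$. Thus any $\delta\in \ca$ can be replaced, without altering $J(\delta)$, by the strategy obtained by setting $\t_n=T$ for $n>\mathfrak{X}$, which lies in $\ca_{\mathfrak{X}}$. This gives $\Sup_{\delta\in \ca}J(\delta)\le \Sup_{\delta\in \ca_{\mathfrak{X}}}J(\delta)$ and hence the desired equality.

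Combining the two facts yields $J(\tilde{\delta}^{\mathfrak{X}})=\Sup_{\delta\in \ca_{\mathfrak{X}}}J(\delta)=\Sup_{\delta\in \ca}J(\delta)$, so $\tilde{\delta}^{\mathfrak{X}}$ is an optimal admissible strategy for the risk-sensitive problem, which is precisely the statement of the Corollary.

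I do not anticipate any serious obstacle: essentially all of the work is already contained in Theorem \ref{theorem-optimal}, and the single genuinely new ingredient is the risk-sensitive version of Proposition \ref{sup}, whose proof is purely combinatorial in the intervention times. The only point requiring a word of care is the counting of relevant impulses in the boundary case where $T/\d$ is an integer; even there the bound by $\mathfrak{X}$ remains valid, so the reduction of $\ca$ to $\ca_{\mathfrak{X}}$ goes through unchanged.
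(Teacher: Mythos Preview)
Your proposal is correct and matches the paper's approach: the paper presents the Corollary as an immediate by-product of Theorem \ref{theorem-optimal} with $n=\mathfrak{X}$, relying on Proposition \ref{sup}. One minor remark: you speak of needing a ``risk-sensitive counterpart of Proposition \ref{sup}'', but in fact Proposition \ref{sup} was stated and proved for the general payoff \eqref{eqpayoff} with arbitrary utility $\Phi$, so it already covers the risk-sensitive case verbatim and no separate argument is needed.
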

\section{The infinite horizon case}
In this section $B=(B_t)_{t\ge 0}$ is a $d$-dimensional Brownian motion on $(\Omega,\mathcal{F},\P)$, $(\Fc_{t}^{0}:= \sigma\{B_{s}, s\leq t\})_{t\geq 0}$ is the natural filtration of $B$, $(\mathcal{F}_{t})_{t\geq 0}$ its completion with the $\P$-null sets of $\Fc$ and finally $\cf_\infty= \bigvee_{t\ge 0}\cf_t$. On the other hand, 
\begin{itemize}
\item $\mathcal{P}$ is the $\sigma$-algebra on $[0,+\infty[\times\Omega$
 of $(\mathcal{F}_{t})_{t\ge 0}$-progressively measurable processes;
\item [$\bullet$] $\mathcal{H}^{p,n}_\infty=\{(v_{t})_{ t\geq 0}:$ $\mathcal{P}$-measurable, $\R^{n}$-valued process s.t.
$\E[\int_{0}^{\infty}|v_{s}|^{p}\,ds]<\infty$\}\,($n\ge 1$ \mx{and} $p>1$);
\item [$\bullet$] $\mathcal{S}^{2}_\infty=\{(Y_t)_{0\le t\le \infty}$, $\mathcal{P}$-measurable  c\`{a}dl\`{a}g (or rcll) process, s.t.
$\E[\sup_{0\le t\le \infty}|Y_{t}|^{2}]<\infty\}$;

\item [$\bullet$] $\mathcal{S}_{c,\infty}^{2}$ (resp. 
$\mathcal{S}_{i,\infty}^{2}$) is the subset of 
$\mathcal{S}^{2}_\infty$ of continuous processes (resp. continuous non-decreasing 
processes $(k_{t})_{0\le t\le \infty}$ such that $k_0=0$);

\item [$\bullet$] $L^{2}_\infty=\{\eta :  \mathcal{F}_{\infty}-$ measurable random variable, s.t.
$\E[|\eta|^{^{2}}]<\infty\}$;

\item [$\bullet$]$\mathcal{T}_{t_0}=\{\nu: (\mathcal{F}_{t})_{t\geq 0}$-stopping time, s.t. $\P$-a.s, $t_0\leq \nu \}$ $(t_0\ge 0)$.
\end{itemize}

Next, for any stochastic process $y:=(y_t)_{t\in [0,\infty)}$,  we  use the following notation:  $y_{\infty}=\limsup_{t\rightarrow \infty}y_t$.
In addition, the process $y$ is said to be continuous at $+\infty$ if $\lim_{t\rightarrow \infty}y_t$ exists.  Finally, if $y$ 
is a {c\`adl\`ag}  $\mathcal{F}_t$-supermartingale, which is non-negative or bounded from below, then according to (\cite{Karatzas3}, pp.18), it is continuous at $+\infty$.

In this section we are considering the impulse control problem in infinite horizon. An impulse strategy 
$\dl$ is ${(\tau_n,\xi_n)}_{n\ge 1}$ where for any $n\ge 1$, $\t_n$ is an $\cft$-stopping time such that $\tau_n+\Delta \leq \tau_{n+1}$ and 
$\xi_n$ a random variable with values in $U$ a finite subset of $\rl$ and $\cf_{\t_n+\d}$-measurable. We denote by $\A_\infty$ the set of admissible strategies.

When a strategy $\dl$ is implemented by the controller his/her payoff is given by: 
\begin{equation*}
J(\delta):=\E[\Phi\{\int_{0}^{+\infty} e^{-rs}g(s,X_{s}^{\delta})\;ds-\sum_{n\geq
1}e^{-r\tau_n}\psi(\xi_{n})\1_{[\tau_{n}< {+\infty}]}\}], 
\end{equation*}
with: 

i) the controlled process $X^\dl$, taking its values in $\R^\ell$, is given by: 
$$\forall t\ge 0,\,X_{t}^{\delta} =X_{t}+\sum_{n\geq 1}\xi_{n}\1_{[\tau_{n}+\Delta \leq t]} $$
where $(X_t)_{t\ge 0}$ is a $\mathcal P$-measurable process with values in $\R^\ell$ which stands for the dynamics of the uncontrolled system.

ii) $\Phi(.)$ is the utility function and $r$ is a discount factor.
\ms

\noindent 
We aim to find an  optimal strategy $\delta^{*}$, i.e., satisfying: $$
\sup_{\delta \in \A_\infty} J(\delta) = J(\delta^{*}).
$$
An example of the stochastic process above we have in mind is:
$$\barl
\forall t\ge 0,\, X_t=x+\int_0^tb(s,\omega)ds+\int_0^t\sigma(s,\omega)dB_s\ear$$ where $(b_t)_{t\ge 0}$ (resp. $(\sigma_t)_{t\ge 0}$) is a process of 
$\mathcal{H}^{1,\ell}_\infty$ (resp. $\mathcal{H}^{2,\ell \times d}_\infty$). For general processes $b$ and $\sigma$, the It\^o process $X$ is no longer 
a Markov process and then the deterministic methods cannot be applied to deal with this problem. 
\ms

We are going to consider the risk-neutral case, i.e., $\Phi(x)=x$. The risk-sensitive case can be considered in the same way. On the other hand, we assume that the functions $g(.)$ and $\psi(.)$ verify:
\begin{Assumption}\label{assumpt3}
${}$

\noindent i) The  mapping $(t,\omega,x)\in [0,\infty[\times\Omega\times  \rl \mapsto g(t,\omega,x)\in \R$ is $\mathcal{P}\otimes \mathcal{B}(\rl)/\mathcal{B}(\R)$-measurable. 
Moreover there exists a non-negative process $(\gamma_t)_{t\ge 0}$ such that:$$\P-a.s. \mbox{ for any }(t,x)\in[0,+\infty[\times\rl,\,|g(t,\omega,x)|\le \gamma_t(\omega) \mx{ and }\E[(\int_0^\infty e^{-rs}\g_sds)^2]<\infty.$$

\noindent ii) For any $a\in \R^\ell$, $\psi(a)\ge 0
.$

\end{Assumption}
\subsection{Construction of the iterative scheme}

 In the sequel, $a$ will stand for an element of $\rl$. So let us consider the following scheme: 
 
The pair of processes $(Y^{0}(a ),Z^{0}(a))$ is a solution of the following standard BSDE in infinite horizon which exists according to the result by Chen \cite{Chen}, pp.96: 
\be\left\{
\barl
(Y^{0}(a ),Z^{0}(a))\in \mathcal{S}_{c,\infty}^{2}\times \mathcal{H}^{2,d}_\infty;\\\\
Y_{t}^{0}(a)=\int_{t}^{\infty}e^{-rs} g(s,X_{s}+a)\,ds-\int_{t}^{\infty}Z_s^{0}(a)dB_s, \,t\in [0,+\infty].
\ear\right.
\ee 
First let us point out that $Y_{\infty}^{0}(a)=0$. Next for any $t\ge 0$ we have, 
\begin{equation}\barl Y_{t}^{0}(a )=\E[\int_{t}^{\infty}e^{-rs} g(s,X_{s}+a)\,ds|\cf_t]
\label{eq4}
\ear
\end{equation} 
which implies that for any 
$a\in \rl$, and for any 
$t\ge 0$, 
\begin{equation}|Y_{t}^{0}(a )|\le \E[\int_{t}^{\infty}e^{-rs} \g_sds|\cf_t].
\label{eq4x}
\end{equation}
Now for $t\in [0,+\infty[$, let  $M_t := \mathbb E[\int_0^{\infty} e^{-rs} \gamma_s\,ds \mid \mathcal F_t].$
Then by Assumption \ref{assumpt3}-i), $(M_t)_{t\ge0}$ is a uniformly integrable martingale. Next using the martingale convergence theorem, there exists
$M_\infty \in L^1(d\P)$ such that $M_t \longrightarrow M_\infty
 \text{ a.s. and in } L^2(d\P).$ 
Since $\int_0^{\infty} e^{-rs} \gamma_s\,ds$ is $\mathcal F_\infty$-measurable, 
we have $M_\infty = \int_0^{\infty} e^{-rs} \gamma_s\,ds$  (see \cite{Revuz}, Chapter 2, Theorem 3.1 and Corollary 2.4). Consequently,
\be\lb{limm}
\lim_{t\to\infty} \mathbb E\!\left[\int_t^{\infty} e^{-rs} \gamma_s\,ds \,\middle|\, \mathcal F_t\right]
=
\lim_{t\to\infty} \bigl(M_t - \textstyle\int_0^t e^{-rs} \gamma_s\,ds\bigr)
= 0
\quad \text{$\P$-a.s. and in } L^2(d\P).
\ee
As $g(.)$ verifies Assumption \ref{assumpt3}-i), then from \eqref{eq4x} we deduce that $\lim_{t\rw \infty}|Y^0_t(a)|=0$ uniformly w.r.t $a\in \rl$. 

Next for any $n \geq1$, let $(Y^{n}(a),Z^{n}(a),K^{n}(a))$ be a triple of processes  which satisfies the following reflected BSDE in infinite horizon:  
\begin{equation}\lb{eq5x}
\left\{
  \begin{array}{ll}
  \,\,&(Y^{n}(a),Z^{n}(a),K^{n}(a)) \in \mathcal{S}_{c,\infty}^{2} \times \Hc^{2,d}_\infty \times {\mathcal{S}_{i,\infty}^{2}};\\\\
     \,\,& Y_{t}^{n}(a)=\int_{t}^{\infty}e^{-rs} g(s,X_{s}+a) \,ds+K_{\infty}^{n}(a)-K_{t}^{n}(a)-\int_{t}^{\infty}Z_{s}^{n}(a)\,dB_{s},\, t\in [0,+\infty];\\\\
     \,\, & Y_{t}^{n}(a)  \geq  O_{t}^{n}(a):=  \E  \big\lbrack\int_{t}^{t+\Delta}e^{-rs} g(s,X_{s}+a) \,ds |\Fc_{t} \big\rbrack+ \\&\qq\qq\qq\qq\qq\qq\E\big\lbrack \max\limits_{\beta\in U} \big\lbrace-e^{-r(t+\Delta)}\psi(\beta)+Y_{t+\Delta}^{n-1}(a+\beta) \big\rbrace|\Fc_{t} \big\rbrack,\,\, t\in [0,+\infty[;\\
  \,\, & \int_{0}^{\infty}(Y_{t}^{n}(a)-O_{t}^{n}(a))\,dK_{t}^{n}(a)=0.
  \end{array}  
  \right.
\end{equation}
\def \ss{\mathcal{S}_{c,\infty}^2}
For any $n\ge 1$, the process $(O_{t}^{n}(a))_{t\ge 0}$ is defined as the $\cft$-predictable projection of 
$$
{\cal L}_{t}^{n}(a):=\int_{t}^{t+\Delta}e^{-rs} g(s,X_{s}+a)ds+
\max\limits_{\beta\in U} \big\lbrace-e^{-r(t+\Delta)}\psi(\beta)+Y_{t+\Delta}^{n-1}(a+\beta) \big\rbrace,\,\,t\ge 0. 
$$
As mentioned in the very beginning, its value for $t=+\infty$ is $\lim_{t\rw +\infty}O_{t}^{n}(a)$ which we show that it exists and equal to 0.

\begin{Proposition} \label{Proposition1} For any $n\ge 1$, for any $a\in \rl$, we have:

a) The process $(O_{t}^{n}(a))_{t\ge 0}$ is continuous, $\E[\sup_{t\ge 0}|O_{t}^{n}(a)|^2]<\infty$ and $O_{\infty}^{n}(a):=\lim_{t\rw +\infty}O_{t}^{n}(a)~=~0$.

b)  The triple of processes $(Y^{n}(a),Z^{n}(a),K^{n}(a))$ is well-posed.

c) For any $ t\in [0,+\infty]$,
 \begin{equation}
Y_{t}^{n}(a)=\esssup_{\tau \in
\mathcal{T}_t}\E \big\lbrack\int_{t}^{\tau} e^{-rs}g(s,X_{s}+a)\,ds+  O_{\tau}^{n}(a)|\mathcal{F}_{t} \big\rbrack.
\label{infordrenn1}\end{equation}

d) For any $ t\geq 0$,
 \begin{equation}
Y_{t}^{0}(a)\le Y_{t}^{n}(a) \leq \E[\int_t^\infty e^{-rs}{\gamma}_{s} ds|\Fc_t].
\label{infordrenn2}\end{equation}

e) For any $a\in \rl$, $Y^{n}(a)\le Y^{n+1}(a).$
\end{Proposition}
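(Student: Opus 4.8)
The plan is to proceed by induction on $n$. The rank $n=0$ is settled by the discussion preceding the statement: $(Y^0(a),Z^0(a))$ exists, $Y^0_\infty(a)=0$, and by \eqref{eq4x} one has $|Y^0_t(a)|\le \E[\int_t^\infty e^{-rs}\gamma_s\,ds\,|\,\cf_t]$ with $\lim_{t\to\infty}|Y^0_t(a)|=0$ uniformly in $a\in\rl$. Assume (a)--(e) hold at rank $n-1$; thus each $Y^{n-1}(b)$, $b\in\rl$, is continuous, vanishes at $+\infty$ and satisfies the same bound. To prove (a), I first observe that $t\mapsto\mathcal L^n_t(a)$ is continuous, since the integral term is continuous and, by the induction hypothesis, each $t\mapsto Y^{n-1}_{t+\d}(a+\beta)$ is continuous, the maximum over the finite set $U$ preserving continuity. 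Because the filtration is Brownian, the predictable projection $O^n(a)$ of the continuous process $\mathcal L^n(a)$ is again continuous (Appendix \ref{section-projection}).

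For the integrability and the limit I exploit the domination coming from $\psi\ge0$ and the induction bound on $Y^{n-1}$. Bounding the maximum from above by $\max_{\beta\in U}Y^{n-1}_{t+\d}(a+\beta)$ gives $O^n_t(a)\le \E[\int_t^\infty e^{-rs}\gamma_s\,ds\,|\,\cf_t]$; keeping a single fixed $\beta_0\in U$ inside the maximum instead yields $O^n_t(a)\ge -\E[\int_t^\infty e^{-rs}\gamma_s\,ds\,|\,\cf_t]-e^{-r(t+\d)}\psi(\beta_0)$. These two bounds give $|O^n_t(a)|\le M_t+\max_{\beta\in U}\psi(\beta)$ with $M_t=\E[\int_0^\infty e^{-rs}\gamma_s\,ds\,|\,\cf_t]$, so that $\E[\sup_{t\ge0}|O^n_t(a)|^2]<\infty$ by Doob's inequality and Assumption \ref{assumpt3}-i). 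Letting $t\to\infty$ in the same two inequalities and using \eqref{limm} together with $e^{-r(t+\d)}\psi(\beta_0)\to0$ squeezes $O^n_t(a)$ to $0$, which proves $O^n_\infty(a)=0$.

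Assertions (b) and (c) then follow from the theory of reflected BSDEs. Since $O^n(a)$ is continuous, of class [D] by the $\mathcal S^2$-bound just obtained, and vanishes at $+\infty$, the obstacle problem \eqref{eq5x} is well posed: I would construct its first component as the Snell envelope $Y^n_t(a)=\esssup_{\tau\in\mathcal T_t}\E[\int_t^\tau e^{-rs}g(s,X_s+a)\,ds+O^n_\tau(a)\,|\,\cf_t]$ — where $\tau=\infty$ is admissible precisely because $O^n_\infty(a)=0$ — which is continuous because $O^n(a)$ is continuous and of class [D] (Proposition \ref{cont-opt}), and then recover $(Z^n(a),K^n(a))$ through the Doob--Meyer decomposition and the Brownian martingale representation, as in the infinite-horizon analogue of Theorem 1.3 in \cite{Hamadene1}. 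This delivers simultaneously (b) and the representation \eqref{infordrenn1} asserted in (c).

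For (d), the upper bound in \eqref{infordrenn2} follows from \eqref{infordrenn1} by replacing $g$ by $\gamma$ and $O^n_\tau(a)$ by its bound $\E[\int_\tau^\infty e^{-rs}\gamma_s\,ds\,|\,\cf_\tau]$, which reduces by the tower property to $\E[\int_t^\infty e^{-rs}\gamma_s\,ds\,|\,\cf_t]$; the lower bound follows by taking the particular stopping time $\tau=\infty$ in \eqref{infordrenn1}, which returns exactly $Y^0_t(a)$ by \eqref{eq4} since $O^n_\infty(a)=0$. Finally (e) is obtained by a second induction: the case reducing to $Y^0(a)\le Y^1(a)$ is the lower bound in (d), and if $Y^{n-1}(b)\le Y^n(b)$ for all $b$, then $O^n(a)\le O^{n+1}(a)$ because the two obstacles differ only through the monotone dependence on $Y^{n-1}$ respectively $Y^n$ inside the maximum, whence $Y^n(a)\le Y^{n+1}(a)$ by the monotonicity of the Snell representation \eqref{infordrenn1} in its obstacle. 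The main obstacle of the whole proof is part (a): establishing the continuity and, above all, the vanishing at $+\infty$ of $O^n(a)$, where the Brownian-filtration projection result and the squeezing argument based on \eqref{limm} do the essential work; once (a) is secured, the infinite-horizon reflected-BSDE machinery yields (b)--(e) routinely.
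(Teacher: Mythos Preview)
Your proof is correct and follows essentially the same inductive route as the paper. The only notable difference is cosmetic: where you sketch the construction of $(Y^n(a),Z^n(a),K^n(a))$ via the Snell envelope plus Doob--Meyer decomposition and martingale representation, invoking an ``infinite-horizon analogue of Theorem 1.3 in \cite{Hamadene1}'', the paper simply cites Theorem 3.2 in \cite{ham-lep}, which is precisely that result; and your two-sided squeeze $-\E[\int_t^\infty e^{-rs}\gamma_s\,ds\,|\,\cf_t]-e^{-r(t+\Delta)}\psi(\beta_0)\le O^n_t(a)\le \E[\int_t^\infty e^{-rs}\gamma_s\,ds\,|\,\cf_t]$ is a slightly cleaner variant of the paper's bound on $|O^n_t(a)|$.
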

\begin{proof} We first deal with points a), b), c) and d). We use induction and first we focus on the case when $n=1$.

As mentioned previously, for any $a\in \rl$, the process $Y^{0}(a)$ exists and belongs to 
$\mathcal{S}_{c,\infty}^2$. As $U$ is finite then the process 
$({\cal L}^1_t(a))_{t\ge 0}$ is continuous and consequently $(O_{t}^{1}(a))_{t\ge 0}$ is so (see Theorem \ref{thmcontproj} in Appendix). On the other hand for any $t\ge 0$, 
\begin{align*}
|O_{t}^{1}(a)|&\le \E[\int_{t}^{t+\Delta}e^{-rs} |g(s,X_{s}+a)|ds  +Ce^{-r(t+\Delta)}+\sum_{\b \in U}\sup_{t\ge 0}|Y_t^{0}(a+\beta)||\cf_t]\\
&\le \E[\int_{0}^{\infty}e^{-rs} \g_s ds  +Ce^{-r(t+\Delta)}+\sum_{\b \in U}\sup_{t\ge 0}|Y_t^{0}(a+\beta)||\cf_t]
\end{align*}
Using now Doob's maximal inequality to deduce
\begin{align*}
\E[\sup_{t\ge 0}|O_{t}^{1}(a)|^2]\le C(1+\E[\{\int_{0}^{\infty}e^{-rs} \g_sds\}^2+\sum_{\b \in U}\sup_{t\ge 0}|Y_t^{0}(a+\beta)|^2]<\infty.
\end{align*}
Note that the constant $C$ may change from line to line. Next, once again since $U$ is finite and using the inequality \eqref{eq4x}, we have: 
\begin{align*}
|O_{t}^{1}(a)|&\le \E  \big\lbrack\Int_{t}^{t+\Delta}e^{-rs} \g_sds +Ce^{-r(t+\Delta)}+\E[\int_{t+\d}^\infty e^{-rs}{\gamma}_{s} ds|\Fc_{t+\d}]|\Fc_{t} \big\rbrack\\&=\E  \big\lbrack\Int_{t}^{\infty}e^{-rs} \g_sds +Ce^{-r(t+\Delta)}|\Fc_{t} \big\rbrack\rw 0 \mx{ as }t\rw +\infty\end{align*}
and then $\lim_{t\rw \infty}O^1_t(a)=0=O^1_\infty(a)$ for any $a\in \rl$. Therefore by Theorem 3.2 in \cite{ham-lep}, the triple $(Y^{1}(a),Z^{1}(a),K^{1}(a))$ satisying \eqref{eq5x} (with $n=1$) exists and is unique. Additionally $Y^1(a)$ verifies: 
\begin{equation}\forall t\in [0,\infty], \,\,
Y_{t}^{1}(a)=\esssup_{\tau \in
\mathcal{T}_t}\E \big\lbrack\int_{t}^{\tau} e^{-rs}g(s,X_{s}+a)\,ds+  O_{\tau}^{1}(a)|\mathcal{F}_{t} \big\rbrack \,\,(Y_{\infty}^{1}(a)=0).
\label{infordrenn1xx}\end{equation}
Finally taking $\t=+\infty$ in \eqref{infordrenn1xx} to obtain:  For any $t\ge 0$, 
\begin{equation}
Y_{t}^{1}(a)\ge \E \big\lbrack\int_{t}^{+\infty} e^{-rs}g(s,X_{s}+a)\,ds|\mathcal{F}_{t}\big\rbrack=Y_{t}^{0}(a)
\label{infordrenn1x}\end{equation}
since, once more for any $a\in \rl$, $O^1_\infty(a)=0.$ On the other hand, for any $t\in [0,\infty[$, $$
O_{t}^{1}(a):=  \E  \big\lbrack\int_{t}^{t+\Delta}e^{-rs} g(s,X_{s}+a) \,ds  +\max\limits_{\beta\in U} \big\lbrace-e^{-r(t+\Delta)}\psi(\beta)+Y_{t+\Delta}^{0}(a+\beta) \big\rbrace|\Fc_{t} \big\rbrack. 
$$
But $\psi(.)\ge 0$, $Y^0(a)$ verifies \eqref{eq4x} and finally using Assumption \ref{assumpt3}-i) to obtain:  $\forall t\ge 0$, 
\be
O_{t}^{1}(a)\le \E  \big\lbrack\int_{t}^{t+\Delta}e^{-rs} \g_sds  +\E[\int_{t+\d}^\infty e^{-rs} \g_sds|\cf_{t+\d} ]|\Fc_{t} \big\rbrack=\E  \big\lbrack\int_{t}^{\infty}e^{-rs} \g_sds  |\Fc_{t} \big\rbrack.
\lb{eq4xxx}\ee
Going back to \eqref{infordrenn1xx} and using the inequality \eqref{eq4xxx} to obtain: $\frto$,
\begin{equation}
Y_{t}^{1}(a)\le \esssup_{\tau \in
\mathcal{T}_t}\E \big\lbrack\int_{t}^{\tau} e^{-rs}\g_sds+  \E  \big\lbrack\int_{\t}^{\infty}e^{-rs} \g_sds  |\Fc_{\t} \big\rbrack\mathcal{F}_{t} \big\rbrack =
\E \big\lbrack\int_{t}^{\infty} e^{-rs}\g_sds|\mathcal{F}_{t} \big\rbrack.
\label{infordrenn2x}\end{equation}
Thus for $n=1$, a), b), c) and d) are satisfied. 
Next assume that for some $n\ge 1$, a), b), c) and d) are satisfied. As for $O^1(a)$, the process $O^{n+1}(a)$ is continuous, uniformly $d\P$-square integrable and $\lim_{t\rw \infty}O^{n+1}_t(a)=0=O^{n+1}_\infty(a)$. Thus, once more by Theorem 3.2 in \cite{ham-lep}, for any $a\in \rl$, the triple 
$(Y^{n+1}(a),Z^{n+1}(a),K^{n+1}(a))$ verifying \eqref{eq4} exists and is unique. Moreover the process $Y^{n+1}(a)$ verifies \eqref{infordrenn1} with $O^{n+1}(a)$. Now since 
$Y^n(a)$ verifies \eqref{infordrenn2} for any $a\in \rl$ and $\psi(.)\ge 0$, then 
$O^{n+1}(a)$ verifies the inequality \eqref{eq4xxx} above and then for any $a\in \rl$, $Y^{n+1}(a)$ verifies \eqref{infordrenn2x}. The fact that $Y^{n+1}(a)\ge Y^{0}(a)$ is easily deduced from \eqref{infordrenn1} in taking $\t=+\infty$. Thus 
$Y^{n+1}(a)$ verifies \eqref{infordrenn2}.

As a result, points a), b), c) and d) are verified for $n+1$ and then they are valid for any $n\ge 1$.  

Let us now focus on point e). For $n=1$, this is just \eqref{infordrenn1x}. To conclude it is enough to use an induction argument and once more the representation \eqref{infordrenn1}. 
\end{proof}

Next we will focus on the limit of $Y^n(a)$ and we have the following:
\begin{Proposition}\label{limyn}
  For any $a \in \rl$, the sequence ${(Y^n(a))}_{n\ge 0}$ converges to a   process $Y(a) \in \mathcal{S}_\infty^2$ satisfying:
  
  i) $\forall t \in  [0,+\infty[$, $Y^0_t(a)\le Y_t(a)  \leq  \E[\int_t^\infty {\gamma}_{s} e^{-rs} ds|\Fc_t].$ 

ii) $\forall t \in  [0,+\infty]$,
      \be \lb{eqlim}
      Y_t(a) = \esssup_{\tau \in {\cal T}_t} \E[\int_t^\tau e^{-rs} \,g(s,X_s+a) ds + O_\tau (a)|\Fc_t],
      \ee
     where $(O_t(a))_{t\ge 0}$ is $\cadlag$ and the pointwise limit  of $(O_t^n(a))_{t\ge 0}$, given by: $\forall t\in [0,\infty[$, 
      \begin{equation}\label{infO}
     O_{t}(a):=  \E  \big\lbrack\Int_{t}^{t+\Delta}e^{-rs} g(s,X_{s}+a) \,ds |\Fc_{t} \big\rbrack+ \E\big\lbrack \max\limits_{\beta\in U} \big\lbrace-e^{-r(t+\Delta)}\psi(\beta)+Y_{t+\Delta}(a+\beta) \big\rbrace|\Fc_{t} \big\rbrack
      \end{equation}and $O_{\infty}(a)=0$.
  \end{Proposition}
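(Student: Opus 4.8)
The plan is to obtain $Y(a)$ as the increasing pointwise limit of the $Y^n(a)$, to identify the limiting obstacle $O(a)$, to pass to the limit in the Snell representation \eqref{infordrenn1}, and finally to upgrade the limit to a genuine element of $\mathcal{S}^2_\infty$ by proving it is \cadlag; this last point is where the real work lies.

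First, fix $a\in\rl$. By Proposition \ref{Proposition1}-e) the sequence $(Y^n_t(a))_{n}$ is nondecreasing in $n$, and by \eqref{infordrenn2} it is squeezed between $Y^0_t(a)$ and $\E[\int_t^\infty e^{-rs}\gamma_s\,ds|\Fc_t]$. Hence $Y_t(a):=\lim_n\uparrow Y^n_t(a)$ exists $\P$-a.s.\ for every $t$ and satisfies the bounds in i). Since $\sup_t|Y_t(a)|\le \sup_t\E[\int_0^\infty e^{-rs}\gamma_s\,ds|\Fc_t]$, Doob's $L^2$ inequality and Assumption \ref{assumpt3}-i) give $\E[\sup_t|Y_t(a)|^2]<\infty$. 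Because $U$ is finite and $Y^{n-1}(a+\beta)\uparrow Y(a+\beta)$, conditional monotone convergence shows that the obstacles increase, $O^n_t(a)\uparrow O_t(a)$, with $O(a)$ given by \eqref{infO}; the bound $|O_t(a)|\le \E[\int_t^\infty e^{-rs}\gamma_s\,ds|\Fc_t]$ passes to the limit and, via \eqref{limm}, forces $O_\infty(a)=0$.

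Next I would establish \eqref{eqlim} by two monotone passages to the limit. For a fixed $\tau\in\mathcal{T}_t$, Proposition \ref{Proposition1}-c) gives $Y_t(a)\ge Y^n_t(a)\ge \E[\int_t^\tau e^{-rs}g(s,X_s+a)\,ds+O^n_\tau(a)|\Fc_t]$, and letting $n\rw\infty$ (using $O^n_\tau(a)\uparrow O_\tau(a)$) produces $Y_t(a)\ge \E[\int_t^\tau e^{-rs}g(s,X_s+a)\,ds+O_\tau(a)|\Fc_t]$, whence $Y_t(a)$ dominates the essential supremum. Conversely, $O^n_\tau(a)\le O_\tau(a)$ gives $Y^n_t(a)\le \esssup_{\tau\in\mathcal{T}_t}\E[\int_t^\tau e^{-rs}g(s,X_s+a)\,ds+O_\tau(a)|\Fc_t]$, and $n\rw\infty$ yields the reverse inequality, proving \eqref{eqlim}.

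\textbf{The main obstacle} is the regularity of the limit, since an increasing limit of continuous processes is only lower semicontinuous a priori. My plan to resolve it is as follows. Writing $R_t:=\int_0^t e^{-rs}g(s,X_s+a)\,ds$, each $Y^n(a)+R$ is the Snell envelope of $R_\cdot+O^n_\cdot(a)$ and hence a supermartingale; passing to the limit by conditional monotone convergence shows $(Y_t(a)+R_t)_t$ is a supermartingale. Its mean $t\mapsto \E[Y_t(a)+R_t]$ is therefore nonincreasing, while being the increasing limit of the continuous functions $t\mapsto\E[Y^n_t(a)+R_t]$ it is also lower semicontinuous; a nonincreasing, lower semicontinuous function is right-continuous, so the supermartingale admits a \cadlag\ modification (cf.\ \cite{Dellacherie}). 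As $R$ is continuous this provides a \cadlag\ version of $Y(a)$, which by the first step is dominated in $L^2$, hence lies in $\mathcal{S}^2_\infty$. Crucially this argument uses no regularity of $O(a)$, so no circularity arises; once $Y(a+\beta)$ is known to be \cadlag\ for every $\beta\in U$, the process $\mathcal{L}_t(a)$ is \cadlag\ and its predictable projection $O(a)$ is \cadlag\ by Theorem \ref{thmcontproj}. A minor technical point is that the pointwise convergences above must be read at the stopping times $\tau$; this is handled by the monotonicity in $n$ together with the \cadlag\ structure just obtained, after which \eqref{eqlim} extends to all $t\in[0,+\infty]$ with endpoint values $Y_\infty(a)=O_\infty(a)=0$.
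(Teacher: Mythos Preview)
Your proof is correct and follows essentially the same route as the paper's. The paper also obtains $Y(a)$ as the increasing limit, passes the bounds to the limit for (i), argues that $Y(a)+R$ is the increasing limit of \cadlag\ supermartingales to get the \cadlag\ property, deduces that $O(a)$ is \cadlag\ via the predictable projection, and then passes the Snell representation to the limit. The only differences are of presentation rather than substance: where the paper simply cites \cite{Dellacherie}, p.~86, for the \cadlag\ regularity of the increasing supermartingale limit, you unpack this via the observation that the expectation function is nonincreasing and lower semicontinuous, hence right-continuous; and where the paper invokes Proposition~\ref{increaseconv} (convergence of Snell envelopes under increasing \cadlag\ obstacles) for \eqref{eqlim}, you prove the two inequalities directly by monotone convergence, which has the small advantage of not requiring $O(a)$ to be \cadlag\ beforehand and so lets you reverse the order of the two steps.
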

\begin{proof}  
The sequence $(Y^n(a))_{n\ge 0}$ is increasing and taking into account of \eqref{infordrenn2}, the limit $Y_t(a):=\lim_{n\rw +\infty}Y^n_t(a)$, $t\in [0,\infty]$, exists. Next the property (i) is obviously obtained by taking the limit in (\ref{infordrenn2}).
Now, taking into account  (\ref{infordrenn1}), the process \\${(Y_t^n(a)+\int_0^t g(X_s+a) ds )}_{t\in [0,+\infty]}$ can be seen as the Snell envelope, and then an \\$(\cf_t)_{t\in [0,+\infty]}$-supermatingale, of the continuous process ${(\int_0^t g(X_s+a) ds )+O_t^n(a))}_{t\in [0,+\infty]}$ which implies that its increasing limit ${(Y_t(a)+\int_0^t g(X_s+a) ds )}_{t\in [0,+\infty]}$  is c\`adl\`ag in $[0,+\infty]$ (see e.g. \cite{Dellacherie}, pp. 86). Therefore the process $(Y_t(a))_{t\in [0,+\infty]}$ is c\`adl\`ag for any $a\in \rl$. It follows that, since $U$ is finite, the process $(\max\limits_{\beta\in U} \big\lbrace-e^{-r(t+\Delta)}\psi(\beta)+Y_{t+\Delta}(a+\beta)\big\rbrace)_{t\in [0,\infty]}$ is also $\cadlag$ and then its $\cft$-predictable projection is so. Thus the process $(O_t(a))_{t\ge 0}$ is $\cadlag$ and verifies the first equation of \eqref{infO} by taking the limit w.r.t $n$ in each hand-sides of the definition of $O^n(a)$ in \eqref{eq5x}. Finally note that \eqref{infO} and point i) imply that $O_\infty(a):=\lim_{t\rw \infty}O_t(a)=0$. Thus  for any $a\in \rl$, for any $t\in [0,+\infty]$, $\lim_{n\rw \infty}\nearrow O^n_t(a)=O_t(a)$ pointwise. Using now Proposition \ref{increaseconv} in Appendix and taking the limit in both hand-sides of \eqref{infordrenn1} one deduces that 
for any $a\in \rl$, \eqref{eqlim} is satisfied. 
\end{proof}

We now focus on the continuity of the process $Y(a)$. So for a stopping time $\t$ and $\xi$ a random variable $\cf_\t$-measurable which takes its values in a finite set $\cu\subset \rl$ we define $(\Pi_t(\xi))_{t\ge \t}$ by:
\be\lb{eqpi}
\forall t\ge \t,\,\,\Pi_t(\xi) = \sum_{a \in \cu} \Pi_t(a) \1_{[\xi=a]},
\ee
with $\Pi(a)$ is one of the processes $Y^n(a),\,Y(a),\, Z^n(a),\, K^n(a)$.
\begin{Proposition}\label{Proposition_cont} For any $a\in \rl$, the  processes $(Y_{t}(a))_{t\in [0,\infty]}$ and $(O_{t}(a))_{t\in [0,\infty]}$ are continuous.
\end{Proposition}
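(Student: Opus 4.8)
The plan is to prove first the continuity of $Y(a)$ for every fixed $a\in\rl$, and then to deduce that of $O(a)$ from the representation \eqref{infO}; the point is that, for each $a$, the continuity of $Y(a)$ can be obtained on its own, without any prior knowledge on the processes $Y(a+\beta)$, so no circularity arises. To this end set $\tilde Y^n_t(a):=Y^n_t(a)+\Int_0^t e^{-rs}g(s,X_s+a)\,ds$ and $\tilde Y_t(a):=Y_t(a)+\Int_0^t e^{-rs}g(s,X_s+a)\,ds$. By \eqref{infordrenn1}, $\tilde Y^n(a)$ is the Snell envelope of the process $(\Int_0^t e^{-rs}g(s,X_s+a)\,ds+O^n_t(a))_{t\ge 0}$, hence an $\cft$-supermartingale, and it is continuous since $Y^n(a)\in\mathcal{S}^2_{c,\infty}$. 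By Proposition \ref{Proposition1}-e) the sequence $(\tilde Y^n(a))_n$ is non-decreasing, and by Proposition \ref{limyn} its limit $\tilde Y(a)$ is a c\`adl\`ag supermartingale. Finally the uniform bound $|Y^n_t(a)|\le \E[\Int_t^\infty e^{-rs}\gamma_s\,ds|\Fc_t]$ of \eqref{infordrenn2}, together with Assumption \ref{assumpt3}-i) and Doob's maximal inequality, shows that the family $\{\tilde Y^n(a)\}_n$ is dominated by a fixed uniformly integrable martingale, so the $\tilde Y^n(a)$ and $\tilde Y(a)$ are all of class [D].

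I would then invoke the regularity theorem for increasing limits of supermartingales: an increasing limit of continuous (hence regular) supermartingales of class [D] is a regular supermartingale (see \cite{Dellacherie}). Applied to $\tilde Y^n(a)\uparrow\tilde Y(a)$, it gives that $\tilde Y(a)$ is regular, so in its Doob--Meyer decomposition $\tilde Y(a)=\tilde Y_0(a)+M-A$ the predictable increasing process $A$ is continuous. Since the filtration is Brownian the martingale part $M$ is continuous as well, whence $\tilde Y(a)$ is continuous on $[0,\infty)$, and therefore so is $Y_t(a)=\tilde Y_t(a)-\Int_0^t e^{-rs}g(s,X_s+a)\,ds$. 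Continuity at $+\infty$ follows from the squeeze $Y^0_t(a)\le Y_t(a)\le \E[\Int_t^\infty e^{-rs}\gamma_s\,ds|\Fc_t]$ of \eqref{infordrenn2} together with \eqref{limm} and $\lim_{t\rw\infty}|Y^0_t(a)|=0$, which yield $\lim_{t\rw\infty}Y_t(a)=0=Y_\infty(a)$.

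Once $Y(a+\beta)$ is known to be continuous for every $\beta\in U$, the continuity of $O(a)$ is immediate from \eqref{infO}: the first term $\E[\Int_t^{t+\Delta}e^{-rs}g(s,X_s+a)\,ds|\Fc_t]$ is continuous, being the $\cft$-predictable projection of a continuous process in a Brownian filtration, while the process $\max_{\beta\in U}\{-e^{-r(t+\Delta)}\psi(\beta)+Y_{t+\Delta}(a+\beta)\}$ is now continuous (finite maximum of continuous processes), so its $\cft$-predictable projection, which is the second term of $O(a)$, is continuous by Theorem \ref{thmcontproj}. Hence $O(a)$ is continuous on $[0,\infty)$, while $O_\infty(a)=0$ has already been recorded in Proposition \ref{limyn}.

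The main obstacle is the middle step. A priori the increasing limit $\tilde Y(a)$ is merely c\`adl\`ag, and since the barrier $O(a)$ may carry \emph{downward} jumps — inherited from possible downward jumps of the $Y(a+\beta)$, which at this stage are only known to be c\`adl\`ag — the usual Snell-envelope continuity criterion (continuity of the envelope when the barrier has only positive jumps) is \emph{not} applicable here. What rescues the argument is exactly that $\tilde Y(a)$ is an increasing limit of \emph{continuous} supermartingales, which forces the compensator of the limit to be continuous; making this passage rigorous, namely from ``increasing limit of continuous supermartingales'' to ``continuous Doob--Meyer compensator'', is the crux on which the whole statement rests.
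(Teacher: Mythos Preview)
Your argument has a genuine gap at exactly the point you yourself flag as ``the crux'': the assertion that an increasing limit of continuous (regular) supermartingales of class [D] is itself regular is \emph{false}. A deterministic counterexample suffices. For $n\ge 1$ set
\[
X^n_t=\begin{cases}0,&0\le t\le 1-\tfrac1n,\\[2pt]-n\bigl(t-1+\tfrac1n\bigr),&1-\tfrac1n\le t\le 1,\\[2pt]-1,&t\ge 1.\end{cases}
\]
Each $X^n$ is continuous, non-increasing (hence a supermartingale in any filtration), bounded (hence of class [D]), and one checks that $(X^n)_n$ increases pointwise to $X_t=-\1_{[t\ge 1]}$. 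The limit $X$ is a c\`adl\`ag supermartingale whose Doob--Meyer compensator $A_t=\1_{[t\ge 1]}$ is predictable but discontinuous; thus $X$ is not regular. So nothing in your scheme prevents $\tilde Y(a)$ from inheriting a negative jump from the barrier $O(a)$, and the passage ``increasing limit of continuous supermartingales $\Rightarrow$ continuous compensator'' simply does not hold. No such statement appears in \cite{Dellacherie}.

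The paper's proof takes a genuinely different route that exploits the delay $\Delta$. It argues by contradiction: if $Y(a)$ had a negative jump at some predictable time $T$, the Snell-envelope property (Proposition \ref{snellenv}-iii)) forces $Y_{T^-}(a)=O_{T^-}(a)$, hence $O(a)$ jumps down at $T$ as well. But by the structure of $O(a)$ in \eqref{infO}, a negative jump of $O(a)$ at $T$ can only come from a negative jump of some $Y(a+\beta_1)$ at time $T+\Delta$. Iterating, one manufactures jumps at times $T+n\Delta$ whose size dominates the original jump, while the uniform bound $|O_t(\cdot)|\le \E[\int_t^\infty e^{-rs}\gamma_s\,ds+Ce^{-r(t+\Delta)}|\Fc_t]$ forces these jumps to vanish as $n\to\infty$ --- a contradiction. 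This ``push the jump to infinity via the delay'' mechanism is exactly what breaks the circularity you correctly identified between the continuity of $Y(a)$ and that of $O(a)$; it uses the specific recursive structure of the problem and cannot be replaced by an abstract regularity-preservation lemma.
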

\begin{proof}
We will show that the process $(Y_{t}(a))_{t\in [0,\infty]}$ is continuous and then continuity of $(O_{t}(a))_{t\in [0,\infty]}$ will follow by Theorem \ref{thmcontproj} in Appendix. First recall that for any $a\in \rl$, $O_{\infty}(a)=\lim_{t\rw \infty}O_{t}(a)=0.$

Next for any predictable stopping time $\tau $ and any finite random variable $\eta$ which is $\cf_\t$-measurable with values in $\rl$, let us set $A_\tau(\eta):=\{ Y_\tau(\eta) - Y_{\tau-}(\eta)<0\}$.

So, let $T$ be a predictable stopping time for which $\mathbb P[A_T(a)]>0$. According to Proposition \ref{snellenv} in Appendix, we have $\1_{A_T(a)} O_{T^{-}}(a)=\1_{A_T(a)}Y_{T^{-}}(a)$ and
$$
(O_{T^{-}}(a)- O_{T}(a)) \1_{A_T(a)} \geq (Y_{T^{-}}(a)- Y_{T}(a)) \1_{A_T(a)} >0.
$$
Therefore $\P[\{T=\infty\}\cap A_T(a)]=0$ since $O_{\infty}(a)=\lim_{t\rw \infty}O_{t}(a)=0.$ 
On the other hand,
\begin{eqnarray*}
\1_{A_T(a)}\big(O_{T^{-}}(a)- O_{T}(a)\big)&= & \1_{A_T(a)}\big\lbrace \E \big\lbrack \max_{\beta \in U} \{ -e^{-r(T+\Delta)}\psi(\beta)+Y_{(T+\Delta)^{-}}(a+\beta)\}|\Fc_{T}\big\rbrack  \\ & {} &\qq \qq \qq -   \E \big\lbrack \max_{\beta \in U}\{-e^{-r(T+\Delta)}\psi(\beta)+Y_{T+\Delta}(a+\beta)\}|\Fc_{T}\big\rbrack \big\rbrace  \\ 
&\leq &
 \1_{A_T(a)}\E\big\lbrack \max_{\beta \in U} \big\lbrace Y_{(T+\Delta)^{-}}(a+\beta)-Y_{T+\Delta}(a+\beta)\big\rbrace|\Fc_{T}\big\rbrack  \\   &\leq &
 \1_{A_T(a)} \E \big\lbrack \max_{\beta \in U} \big\lbrace \1_{A_{T+\Delta}(a+\beta)}\big\lbrace Y_{(T+\Delta)^{-}}(a+\beta)-Y_{T+\Delta}(a+\beta)\big\rbrace |\Fc_{T} \big\rbrack.
\end{eqnarray*}
Since $A_T(a) \in \Fc_{T}$, we get:
\begin{eqnarray}\label{cont1} \nonumber
0<\1_{A_{T}(a)}\{O_{T^{-}}(a)- O_{T}(a)\} \leq 
  \E \big\lbrack \1_{A_{T}(a)}\max_{\beta \in U} \big\lbrace \1_{A_{T+\Delta}(a+\beta)} \big\lbrace Y_{(T+\Delta)^{-}}(a+\beta) -Y_{T+\Delta}(a+\beta) \big\rbrace\big\rbrace  |\Fc_{T} \big\rbrack .
\end{eqnarray}
\def \bbeta{\bar \beta}
It implies that the right-hand side is non negative, which means that there exists a  random variable $\bar \beta$ such that $\bar \beta(\Omega) \subset U$ and $\1_{A_{T+\Delta}(a+\bar \beta)} \big\lbrace Y_{(T+\Delta)^{-}}(a+\bar \beta) -Y_{T+\Delta}(a+\bar \beta) >0$, $\P$-a.s. 
Now, since  $Y_{T+\Delta}(a+\bbeta)\geq O_{T+\Delta}(a+\bbeta)$ and $\1_{A_{T+\Delta}(a+\bbeta)} Y_{(T+\Delta)^{-}}(a+\bbeta)=\1_{A_{T+\Delta}(a+\bbeta)} O_{(T+\Delta)^{-}}(a+\bbeta)$, then
\begin{eqnarray*} 
\1_{A_{T}(a)}\{O_{T^{-}}(a)- O_{T}(a)\} \leq 
  \E \big\lbrack \1_{A_{T}(a)}\max_{\beta \in U} \big\lbrace \1_{A_{T+\Delta}(a+\beta)} \big\lbrace O_{(T+\Delta)^{-}}(a+\beta) -O_{T+\Delta}(a+\beta) \big\rbrace |\Fc_{T} \big\rbrack.
\end{eqnarray*}
So let $\beta_0=0$ and $\beta_1$ the $\Fc_{T+\Delta}$-measurable random variable valued in $U$ (which exists since $U$ is finite) such that 
$$
\max_{\beta \in U} \big\lbrace \1_{A_{T+\Delta}(a+\beta)} \big\lbrace O_{(T+\Delta)^{-}}(a+\beta) -O_{T+\Delta}(a+\beta) \big\rbrace \big\rbrace=\1_{A_{T+\Delta}(a+\beta_{1})} \big\lbrace O_{(T+\Delta)^{-}}(a+\beta_{1}) -O_{T+\Delta}(a+\beta_{1})\}.
$$
It follows that
\begin{eqnarray*} 
\1_{A_{T}(a)}\{O_{T^{-}}(a)- O_{T}(a)\} &\leq& \E \big\lbrack \1_{A_{T}(a)}  \E \big\lbrack \1_{A_{T+\Delta}(a+\beta_{1})} \big\lbrace O_{(T+\Delta)^{-}}(a+\beta_{1}) -O_{T+\Delta}(a+\beta_{1}) \big\rbrace|\Fc_{T+\Delta} \big\rbrack |\Fc_{T} \big\rbrack\\ &\leq & 
 \E \big\lbrack \1_{A_{T}(a)}\times \1_{A_{T+\Delta}(a+\beta_{1})} \big\lbrace O_{(T+\Delta)^{-}}(a+\beta_{1}) -O_{T+\Delta}(a+\beta_{1}) \big\rbrace |\Fc_{T} \big\rbrack .
\end{eqnarray*}
Repeating this reasoning as many times as necessary to obtain random variables $\beta_{k}$  valued in $U$ and $\Fc_{T+k\Delta}$-measurable, $k=1,...,n$, such that: 
\begin{eqnarray*}\label{cont3}
0<\1_{A_{T}(a)}\{O_{T^{-}}(a)& -& O_{T}(a)\} \leq 
 \E \big\lbrack \big\lbrace \prod_{k=0}^{n} \1_{A_{T+k\Delta}(a+\beta_1+ \cdots +\beta_k)} \\ &\times &\big( O_{(T+n\Delta)^{-}}(a+\beta_1+ \cdots +\beta_n)-O_{T+n\Delta}(a+\beta_1+ \cdots +\beta_n)\big) \big\rbrace |\Fc_{T}\big\rbrack\\
 &{}&\qq\le 
 2\E[Ce^{-(n+1)\d-T}+\int_{T+n\d}^\infty e^{-rs}{\gamma}_{s}ds|\Fc_{T}]\rw 0 \mbox{ as $n \rw \infty$}.
\end{eqnarray*}
The last inequality stems from the definition of the process $(O_t(a))_{t\in [0,\infty[}$ in \eqref{infO} and then for and $a\in \rl$, for any $t\ge 0$ we have, 
$$
|O_t(a)|\le \E[\int_t^\infty e^{-rs}\g_sds+Ce^{-t-\d}|\Fc_t]
$$
by using Assumption \ref{assumpt3}-i), i) of Proposition \ref{limyn} and the finitness of $U$. Therefore, 
$$
0<\1_{A_{T}(a)}\{O_{T^{-}}(a)-O_{T}(a)\}=0,
$$
which is absurd. Thus for any $a\in \rl$, the process $(Y_t(a))_{t\in [0,\infty]}$ is continuous. Finally the process $(O_t(a))_{t\in [0,\infty]}$ is then so. 
\end{proof}
\begin{Theorem}\label{infoptimal}  There exists an optimal strategy $\delta^{*,\infty} = (\tau_{n}^{*},\beta_{n}^{*})_{n\geq 0}$ for the infinite horizon stochastic impulse control problem, i.e., 
 $$ Y_{0}(0)=\Sup_{\delta \in \mathcal{A}}J(\delta)=J(\delta^{*,\infty}).$$ 
\end{Theorem}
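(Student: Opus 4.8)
The plan is to adapt the two-step scheme of Proposition~\ref{optimal strategy} to the infinite-horizon limit objects $Y(a)$, $O(a)$ furnished by Propositions~\ref{limyn} and~\ref{Proposition_cont}, the new ingredient being a tail estimate, powered by the discount factor and the delay, that replaces the termination of the recursion after $\mathfrak X$ steps. Write $a_k^*:=\beta_1^*+\cdots+\beta_k^*$. I first construct the candidate $\delta^{*,\infty}=(\tau_n^*,\beta_n^*)_{n\ge1}$ recursively. Set $\tau_1^*=\inf\{s\ge0:\,Y_s(0)=O_s(0)\}$; since $Y(0)$ and $O(0)$ are continuous (Proposition~\ref{Proposition_cont}) and $Y(0)+\int_0^\cdot e^{-rs}g(s,X_s)\,ds$ is the Snell envelope of the class-[D] obstacle $\int_0^\cdot e^{-rs}g(s,X_s)\,ds+O_\cdot(0)$, the representation \eqref{eqlim} makes $\tau_1^*$ an optimal stopping time, whence $Y_0(0)=\E[\int_0^{\tau_1^*}e^{-rs}g(s,X_s)\,ds+O_{\tau_1^*}(0)]$. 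Expanding $O_{\tau_1^*}(0)$ through \eqref{infO} and selecting, by the same measurable partition of $\Omega$ used to build $\tilde\Gamma^{1,n}_l$ in Proposition~\ref{optimal strategy}, an $\Fc_{\tau_1^*+\d}$-measurable $\beta_1^*\in U$ attaining $\max_{\beta\in U}\{-e^{-r(\tau_1^*+\d)}\psi(\beta)+Y_{\tau_1^*+\d}(\beta)\}$, I obtain
$$Y_0(0)=\E\big[\int_0^{\tau_1^*+\d}e^{-rs}g(s,X_s)\,ds-e^{-r(\tau_1^*+\d)}\psi(\beta_1^*)\1_{[\tau_1^*<\infty]}+Y_{\tau_1^*+\d}(\beta_1^*)\big].$$
Iterating, given $a_k^*$ I set $\tau_{k+1}^*=\inf\{s\ge\tau_k^*+\d:\,Y_s(a_k^*)=O_s(a_k^*)\}$ and pick $\beta_{k+1}^*$ the analogous $\Fc_{\tau_{k+1}^*+\d}$-measurable maximizer. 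By construction $\tau_{k+1}^*\ge\tau_k^*+\d$, so $\delta^{*,\infty}$ is admissible and $\tau_k^*\ge(k-1)\d\to\infty$.

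Next I would prove $Y_0(0)=J(\delta^{*,\infty})$. Applying the one-step identity to $Y_{\tau_k^*+\d}(a_k^*)$ at each stage and summing, after $n$ iterations
$$Y_0(0)=\E\big[\int_0^{\tau_n^*+\d}e^{-rs}g(s,X_s^{\delta^{*,\infty}})\,ds-\sum_{k=1}^{n}e^{-r(\tau_k^*+\d)}\psi(\beta_k^*)\1_{[\tau_k^*<\infty]}+Y_{\tau_n^*+\d}(a_n^*)\big].$$
The remainder is killed by the bound $|Y_t(a)|\le\E[\int_t^\infty e^{-rs}\gamma_s\,ds\mid\Fc_t]$ from Proposition~\ref{limyn}(i) and \eqref{eq4x}: indeed $\E[|Y_{\tau_n^*+\d}(a_n^*)|]\le\E[\int_{\tau_n^*+\d}^\infty e^{-rs}\gamma_s\,ds]\to0$, since $\tau_n^*\to\infty$ and $\int_0^\infty e^{-rs}\gamma_s\,ds\in L^1$ dominates (dominated convergence). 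Letting $n\to\infty$, the truncated reward converges to $\int_0^\infty e^{-rs}g(s,X_s^{\delta^{*,\infty}})\,ds$ by dominated convergence and the nonnegative penalty series converges monotonically, so the three terms assemble into the infinite-horizon payoff and $Y_0(0)=J(\delta^{*,\infty})$.

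For optimality I would show $Y_0(0)\ge J(\delta)$ for every $\delta=(\tau_n,\xi_n)_{n\ge1}\in\A_\infty$. Using \eqref{eqlim} with the suboptimal time $\tau_1$ gives $Y_0(0)\ge\E[\int_0^{\tau_1}e^{-rs}g(s,X_s)\,ds+O_{\tau_1}(0)]$; bounding the maximum inside $O_{\tau_1}(0)$ below by the particular value at $\xi_1$ (legitimate since $\xi_1\in U$ and $Y_{\tau_1+\d}(\xi_1)=\sum_{a}Y_{\tau_1+\d}(a)\1_{[\xi_1=a]}$) yields the one-step inequality with $Y_{\tau_1+\d}(\xi_1)$. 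Iterating exactly as in Step~2 of Proposition~\ref{optimal strategy}, using $\tau_{k+1}\ge\tau_k+\d$ and the representation \eqref{eqlim} for $Y(\xi_1+\cdots+\xi_k)$, after $n$ steps
$$Y_0(0)\ge\E\big[\int_0^{\tau_n+\d}e^{-rs}g(s,X_s^{\delta})\,ds-\sum_{k=1}^{n}e^{-r(\tau_k+\d)}\psi(\xi_k)\1_{[\tau_k<\infty]}+Y_{\tau_n+\d}(\xi_1+\cdots+\xi_n)\big].$$
Here $\tau_n\to\infty$, and since $Y_t(a)\ge Y_t^0(a)\ge-\E[\int_t^\infty e^{-rs}\gamma_s\,ds\mid\Fc_t]$ the remainder obeys $\E[Y_{\tau_n+\d}(\cdots)]\ge-\E[\int_{\tau_n+\d}^\infty e^{-rs}\gamma_s\,ds]\to0$. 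Passing to the limit (dominated convergence for the reward, monotone convergence for the penalties) gives $Y_0(0)\ge J(\delta)$, and together with the previous step $Y_0(0)=\sup_{\delta\in\A_\infty}J(\delta)=J(\delta^{*,\infty})$.

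I expect the main obstacle to be the rigorous passage to the limit $n\to\infty$ in both directions, namely proving that the remainder $\E[Y_{\tau_n^*+\d}(a_n^*)]$ (resp. its lower bound) vanishes and that the truncated reward and the penalty partial sums converge to their full-horizon counterparts. This hinges entirely on the uniform envelope $|Y_t(a)|\le\E[\int_t^\infty e^{-rs}\gamma_s\,ds\mid\Fc_t]$, on the $L^2$ and a.s. decay \eqref{limm}, and on the divergence $\tau_n\ge(n-1)\d\to\infty$ forced by the delay $\d$; together these let one apply dominated convergence uniformly in the (unbounded) cumulative impulse $a_n^*$, which is exactly the feature absent from the finite-horizon argument.
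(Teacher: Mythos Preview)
Your proposal is correct and follows essentially the same two-step scheme as the paper: construct $(\tau_n^*,\beta_n^*)$ recursively from the continuous limit processes $Y(a),O(a)$, iterate the one-step identity/inequality $n$ times, and kill the remainder via the uniform envelope $|Y_t(a)|\le\E[\int_t^\infty e^{-rs}\gamma_s\,ds\mid\Fc_t]$ combined with $\tau_n\ge(n-1)\d\to\infty$. The only cosmetic difference is that the paper controls $|Y_0(0)-J(\delta^{*,\infty})|$ directly in one estimate, whereas you argue convergence of the integral, the penalty series, and the remainder separately; both routes rest on the same tail bound and dominated convergence.
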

\begin{proof} We first exhibit the strategy $\delta^{*,\infty}$. Let
$$\tau_{1}^{*}= \inf\{ s\in[0,+\infty];O_{s}(0)= Y_{s}(0)\}$$ 
and $\beta_{1}^{*}$ an $U$-valued random variable, $\Fc_{\tau_{1}^{*} + \Delta}-$measurable such that
$$ O_{\tau_{1}^{*}}(0):= \E\left[\Int_{\tau_{1}^{*}}^{\tau_{1}^{*}+\Delta}  e^{-rs}  g(s,X_s)ds- e^{-r(\tau_{1}^{*}+\Delta)}\psi(\beta_{1}^{*})+Y_{\tau_{1}^{*}+\Delta}(\beta_{1}^{*})|\mathcal{F}_{\tau_{1}^{*}}\right].$$
 For any $n \geq 2$ 
 $$\tau_{n}^{*}=\inf\bigg\lbrace s\in [\tau_{n-1}^{*}+\Delta,+\infty], O_{s}(\beta_{1}^{*}+\cdots +\beta_{n-1}^{*} )= Y_{s}(\beta_{1}^{*}+\cdots +\beta_{n-1}^{*} )\bigg\rbrace$$
 and $\beta_n^*$ an $\Fc_{\tau_n^* + \Delta}-$measurable, $U$-valued random variable satisfying
 \begin{eqnarray*}
 O_{\tau_{n}^{*}}(\beta_{1}^{*}+\cdots +\beta_{n-1}^{*} )&=&\E \bigg\lbrack \Int_{\tau_{n}^{*}}^{\tau_{n}^{*}+\Delta}  e^{-rs}  g(s,X_{s}+\beta_{1}^{*}+\cdots +\beta_{n-1}^{*} )ds \\ &-& e^{-r(\tau_{n}^{*}+\Delta)}\psi(\beta_{n}^{*})+Y_{\tau_{n}^{*}+\Delta}(\beta_{1}^{*}+\cdots +\beta_{n}^{*} )|\mathcal{F}_{\tau_{n}^{*}}\bigg\rbrack.
 \end{eqnarray*}
 Let us highlight first that the stopping times $\t_n^*$, $n\ge 1$, are optimal. Indeed, for any $a\in \rl$, $(Y_t(a))_{t\in [0,\infty]}$ and $(O_t(a))_{t\in [0,\infty]}$ are continuous. Moreover,   taking \eqref{eqlim} into account    and using the fact that $Y_\infty(a)=\lim_{t\rw \infty}O_t(a)=0$, the optimality follows.
 
We now divide the proof into two steps:

\nd \textbf{Step 1:} First let us show that $Y_{0}(0)= J(\delta^{*,\infty})$. Note that $0$ in the index is related to the initial time while $0$ in the parentheses is related to the initial state where an impulse is not made yet.
\ms

\nd Since $\t_1^*$ is optimal and $\b_1^*$ is the optimal impulse then  
\begin{eqnarray*}
 Y_{0}(0)&=& \esssup_{\tau \in
\mathcal{T}_0}\E\left[\int_{0}^{\tau} e^{-rs}g(s,X_s)
ds+  O_{\tau}(0)\right]\\
&=& \E\left[\int_{0}^{\tau_{1}^{*}} e^{-rs}g(s,X_s)
ds+  O_{\tau_{1}^{*}}(0)\right]\\
&=& \E\left[\int_{0}^{\tau_{1}^{*}+\Delta} e^{-rs}g(s,X_s) ds -  e^{-r(\tau_{1}^{*}+\Delta)}\psi(\beta_{1}^{*})+  Y_{\tau_{1}^{*}+\Delta}(\beta_{1}^{*})\right].
\end{eqnarray*}
On the other hand, we have 
\begin{eqnarray*}
Y_{\tau_{1}^{*}+\Delta}(\beta_{1}^{*})& = &  \esssup_{\tau \in
\mathcal{T}_{\tau_{1}^{*}+\Delta}}\E\left[\int_{\tau_{1}^{*}+\Delta}^{\tau} e^{-rs}g(s,X_{s}+\beta_{1}^{*})
ds+  O_{\tau}(\beta_{1}^{*})|\mathcal{F}_{\tau_{1}^{*}+\Delta}\right]\\&=&
\E\left[\int_{\tau_{1}^{*}+\Delta}^{\tau_{2}^{*}} e^{-rs}g(s,X_{s}+\beta_{1}^{*})
ds+  O_{\tau_{2}^{*}}(\beta_{1}^{*})|\mathcal{F}_{\tau_{1}^{*}+\Delta}\right] \\
& =& \E\left[\int_{\tau_{1}^{*}+\Delta}^{\tau_{2}^{*}+\Delta} e^{-rs}g(s,X_{s}+\beta_{1}^{*}) ds -  e^{-r(\tau_{2}^{*}+\Delta)}\psi(\beta_{2}^{*})+  Y_{\tau_{2}^{*}+\Delta}(\beta_{1}^{*}+\beta_{2}^{*})|\mathcal{F}_{\tau_{1}^{*}+\Delta}\right].
\end{eqnarray*} 
Therefore,
\begin{eqnarray*}
 Y_{0}(0) &=& \E \bigg\lbrack \int_{0}^{\tau_{1}^{*}+\Delta} e^{-rs}g(s,X_s) ds + \int_{\tau_{1}^{*}+\Delta}^{\tau_{2}^{*}+\Delta} e^{-rs}g(s,X_{s}+\beta_{1}^{*}) ds \\ &{}&\qq\qq   - e^{-r(\tau_{1}^{*}+\Delta)}\psi(\beta_{1}^{*})-e^{-r(\tau_{2}^{*}+\Delta)}\psi(\beta_{2}^{*})+  Y_{\tau_{2}^{*}+\Delta}(\beta_{1}^{*}+\beta_{2}^{*})\bigg\rbrack.
\end{eqnarray*}
Repeating $n$ times this reasoning to obtain:
\begin{eqnarray*}
 Y_{0}(0)  &= &\E \bigg\lbrack \int_{0}^{\tau_{1}^{*}+\Delta} e^{-rs}g(s,X_s) ds  + \sum_{1\leq k \leq n-1} \int_{\tau_{k}^{*}+\Delta}^{\tau_{k+1}^{*}+\Delta} e^{-rs}g(s,X_{s}+\beta_{1}^{*}+\cdots+\beta_{k}^{*}) ds \\ \qquad &{}&\qq\qq -  \sum_{k=1}^{n} e^{-r(\tau_{k}^{*}+\Delta)}\psi(\beta_{k}^{*})+Y_{\tau_{n}^{*}+\Delta}(\beta_{1}^{*}+\cdots + \beta_{n}^{*})\bigg\rbrack.
\end{eqnarray*}
It implies that 
\begin{align*}
|Y_0(0)-J(\dl^{*,\infty})|&\le 
 \E \bigg\lbrack  \sum_{k\geq n} \int_{\tau_{k}^{*}+\Delta}^{\tau_{k+1}^{*}+\Delta} e^{-rs}|g(s,L_{s}+\beta_{1}^{*}+\cdots+\beta_{k}^{*})|ds +  \\&\qq\qq\qq\qq \sum_{k\ge n+1}e^{-r(\tau_{k}^{*}+\Delta)}\psi(\beta_{k}^{*})+|Y_{\tau_{n}^{*}+\Delta}(\beta_{1}^{*}+\cdots + \beta_{n}^{*})|\bigg\rbrack\\
 &\le 2\E[\int_{n\d}^\infty e^{-rs}\g_sds+Ce^{-n\d}]
\end{align*}
where $C$ is a constant which does not depend on $n$. This latter inequality stems from Assumption \ref{assumpt3}-i), the inequality of Proposition \ref{limyn}-i) and finally the delay $\d$ between two successif stopping times. Finally take the limit w.r.t $n$ to obtain that 
$Y_0(0)=J(\dl^{*,\infty})$. 
\vskip0.2cm 

\nd \textbf{Step 2:}  $Y_0(0)\geq J(\delta ^{\prime})$ for any $\delta^{\prime} \in \mathcal{A}_\infty$. \\
By virtue of Proposition \ref{limyn}-ii), we have:
 $$ Y_{0}(0)\geq \E\left[\int_{0}^{\tau_{1}^{\prime}} e^{-rs}g(s,X_s)
ds+  O_{\tau_{1}^{\prime}}(0)\right].$$ 
However,
 $$ O_{\tau_{1}^{\prime}}(0) \geq \E\left[\Int_{\tau_{1}^{\prime}}^{\tau_{1}^{\prime}+\Delta}  e^{-rs}  g(s,X_s)ds- e^{-r(\tau_{1}^{\prime}+\Delta)}\psi(\beta_{1}^{\prime})+  Y_{\tau_{1}^{*}+\Delta}(\beta_{1}^{\prime})|\mathcal{F}_{\tau_{1}^{\prime}}\right].$$
Combining the last inequality with the previous one, we obtain:  
\begin{eqnarray}
 Y_{0}(0)\geq \E\left[\int_{0}^{\tau_{1}^{\prime}+\Delta} e^{-rs}g(s,X_s)ds - e^{-r(\tau_{1}^{\prime}+\Delta)}\psi(\beta_{1}^{\prime})+  Y_{\tau_{1}^{\prime}+\Delta}(\beta_{1}^{\prime})\right].
\end{eqnarray}
On the other hand, we have that
\begin{eqnarray*}
Y_{\tau_{1}^{'}+\Delta}(\beta_{1}^{\prime})& =& \esssup_{\tau \in \mathcal{T}_{\tau_{1}^{\prime}+\Delta}}\E \left[
\int_{\tau_{1}^{\prime}+\Delta}^{\tau} e^{-rs}g(s,X_{s}+\beta_{1}^{\prime})ds + O_{\tau}(\beta_{1}^{\prime})|\Fc_{\tau_{1}^{\prime}+\Delta}\right]\\
& \geq & \E \left[ \int_{\tau_{1}^{\prime}+\Delta}^{\tau_{2}^{\prime}} e^{-rs}g(s,X_{s}+\beta_{1}^{\prime})ds+ O_{\tau_{2}^{\prime}}(\beta_{1}^{\prime})|\Fc_{\tau_{1}^{\prime}+\Delta}\right]\\
&\geq &\E \left[ \int_{\tau_{1}^{\prime}+\Delta}^{\tau_{2}^{\prime}+\Delta} e^{-rs}g(s,X_{s}+\beta_{1}^{\prime})ds-  e^{-r(\tau_{2}^{\prime}+\Delta)}\psi(\beta_{2}^{\prime})+  Y_{\tau_{2}^{\prime}+\Delta}(\beta_{1}^{\prime}+\beta_{2}^{\prime})|\Fc_{\tau_{1}^{\prime}+\Delta}\right].
\end{eqnarray*} 
Therefore,
\begin{eqnarray*}
Y_{0}(0) & \geq & \E \bigg\lbrack \int_{0}^{\tau_{1}^{\prime}+\Delta} e^{-rs}g(s,X_s)ds+ \int_{\tau_{1}^{\prime}+\Delta}^{\tau_{2}^{\prime}+\Delta} e^{-rs}g(s,X_{s}+\beta_{1}^{\prime})ds \\ &-&  e^{-r(\tau_{1}^{\prime}+\Delta)}\psi(\beta_{1}^{\prime})- e^{-r(\tau_{2}^{\prime}+\Delta)}\psi(\beta_{2}^{\prime})+  Y_{\tau_{2}^{\prime}+\Delta}(\beta_{1}^{\prime}+\beta_{2}^{\prime})\bigg\rbrack.\end{eqnarray*}
In repeating this reasoning as many times as necessary, we obtain:
\begin{eqnarray*}
Y_{0}(0) & \geq & \E \bigg\lbrack \int_{0}^{\tau_{1}^{\prime}+\Delta} e^{-rs}g(s,X_s)ds+\sum_{1\leq k \leq n-1}\int_{\tau_{k}^{\prime}+\Delta}^{\tau_{k+1}^{\prime}+\Delta} e^{-rs}g(s,X_{s}+\beta_{1}^{\prime}+\cdots + \beta_{k}^{\prime})ds \\ &-& \sum_{k=1}^{n}e^{-r(\tau_{k}^{\prime}+\Delta)}\psi(\beta_{k}^{\prime})+ Y_{\tau_{n}^{\prime}+\Delta}(\beta_{1}^{\prime}+\cdots +\beta_{n}^{\prime})\bigg\rbrack.
\end{eqnarray*}
Finally, by taking the limit as $n\to +\infty$, we get 
$$ Y_{0}(0) \geq  \E \left[\int_{0}^{+\infty } e^{-rs}g(s,X_{s}^{\delta^{\prime}})ds - \sum_{n\geq 1}e^{-r(\tau_{n}^{\prime}+\Delta)}\psi(\beta_{n}^{\prime})\right]= J(\delta^{\prime})$$
since by Proposition \ref{limyn}-i), \eqref{eq4x}, \eqref{limm} and \eqref{eqpi}, $\lim_{n\rw \infty}
Y_{\tau_{n}^{\prime}+\Delta}(\beta_{1}^{\prime}+\cdots +\beta_{n}^{\prime})=0$ $\P$-a.s. and in $L^1(d\P)$. Hence, the strategy $\delta^{*,\infty}$ is optimal
 \end{proof}
\begin{Remark}\lb{rmkrs}It is also possible to consider the infinite horizon risk-sensitive case. As this setting does not rise major difficulties w.r.t the risk neutral framework we considered, then it is left to the care of the reader. Only small adaptations are needed especially the integrability of the function $g(.)$ and its majorizing process $(\g_t)_{t\ge 0}$ (see Assumption \ref{assumpt-expo}). 
\end{Remark}

\section{Appendix}
\subsection{Optional and predictable projections}\label{section-projection}
 The acronymes $rcll$ and $lcrl$ refer respectively to right continuous left limited and left continuous right limited.
Next let  $(\Omega,\mathcal{F},\P)$
be a complete probability space which carries a filtration $(\mathcal{F}_{t})_{t\geq 0}$ which satifies the usual conditions. 
 
\begin{Theorem} (\cite{Dellacherie}, pp. 116) 
Let $X:=(X_t)_{t\geq 0}$ be a measurable process verifying \\$\E[\sup_{t\ge 0}|X_t|]<\infty$. Then there exists a unique (up to indistinguishability)  optional process $^{o}X$ such that for any $(\mathcal{F}_{t})_{t\geq 0}$-stopping time $\tau$,
$$\E( X_{\tau}\1_{\{\tau<\infty\}}|\mathcal{F}_{\tau})=\, ^{o}X_{\tau}\1_{\{\tau<\infty\}}.$$
Likewise, there exist a unique predictable projection $^{p}X$ such that: for any $(\mathcal{F}_{t})_{t\geq 0}$-predictable stopping time $\tau$,
$$\E( X_{\tau}\1_{\{\tau<\infty\}}|\mathcal{F}_{\tau-})=\, ^{p} X_{\tau}\1_{\{\tau<\infty\}}.$$
The process $^{o}X$ (resp. $^{p}X$) is called the optional (resp. predictable) projection of $X$.
\end{Theorem}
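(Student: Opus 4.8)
The plan is to establish existence by reducing, via the functional monotone class theorem, to a simple multiplicative generating class of processes, and to establish uniqueness by appealing to the optional (resp. predictable) section theorem. I treat the optional projection in detail; the predictable case is entirely parallel, with $\F_\tau$ replaced by $\F_{\tau-}$ and stopping times replaced by predictable stopping times.

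\textbf{Existence.} First I would treat bounded processes. Consider the class $\mathcal{C}$ of processes of the form $X_t(\omega)=f(t)Z(\omega)$, where $f:[0,\infty)\to\R$ is bounded Borel and $Z$ is bounded and $\F$-measurable. For such an $X$ I would set ${}^{o}X_t=f(t)M_t$, where $M$ is the \cadlag version of the uniformly integrable martingale $M_t=\E[Z\mid\F_t]$ (which exists because the filtration satisfies the usual conditions). Since $f(\cdot)$ is deterministic and Borel, hence progressively measurable, the process ${}^{o}X$ is optional; and for any stopping time $\tau$, optional sampling gives $\E[X_\tau\1_{\{\tau<\infty\}}\mid\F_\tau]=f(\tau)\E[Z\mid\F_\tau]\1_{\{\tau<\infty\}}=f(\tau)M_\tau\1_{\{\tau<\infty\}}={}^{o}X_\tau\1_{\{\tau<\infty\}}$, so ${}^{o}X$ has the required projection property. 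The class $\mathcal{C}$ is multiplicative and generates $\mathcal{B}([0,\infty))\otimes\F$. Next I would let $\mathcal{H}$ be the vector space of bounded measurable processes admitting an optional projection in the stated sense. It contains the constants and $\mathcal{C}$, and it is stable under bounded monotone limits: if $0\le X^{n}\uparrow X$ with the $X^n$ uniformly bounded in $\mathcal{H}$, then the optional processes ${}^{o}X^{n}$ increase to an optional process ${}^{o}X$, and monotone convergence for conditional expectations lets me pass to the limit in $\E[X^n_\tau\1_{\{\tau<\infty\}}\mid\F_\tau]={}^{o}X^n_\tau\1_{\{\tau<\infty\}}$. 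By the functional monotone class theorem, $\mathcal{H}$ contains every bounded $\mathcal{B}([0,\infty))\otimes\F$-measurable process.

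\textbf{From bounded to integrable, and uniqueness.} For a general measurable $X$ with $\E[\sup_{t}|X_t|]<\infty$, I would truncate, setting $X^{(n)}=(-n)\vee X\wedge n$, obtain the optional projections ${}^{o}X^{(n)}$ from the bounded case, and pass to the limit: since $|X^{(n)}|\le\sup_t|X_t|\in L^1$, conditional dominated convergence yields, for every $\tau$, that $\E[X^{(n)}_\tau\1_{\{\tau<\infty\}}\mid\F_\tau]\to\E[X_\tau\1_{\{\tau<\infty\}}\mid\F_\tau]$, and the pointwise limit of the ${}^{o}X^{(n)}$ along a suitable subsequence defines an optional process ${}^{o}X$ with the projection property. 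The integrability hypothesis $\E[\sup_t|X_t|]<\infty$ is used precisely here, to dominate the truncations. For uniqueness, suppose $Y$ and $Y'$ are two optional processes satisfying the defining identity; then $A=\{Y>Y'\}$ is an optional set. If $A$ were not evanescent, the optional section theorem would furnish a stopping time $\tau$ whose graph lies in $A$ with $\P[\tau<\infty]>0$; but on $\{\tau<\infty\}$ both $Y_\tau$ and $Y'_\tau$ equal $\E[X_\tau\mid\F_\tau]$, contradicting $Y_\tau>Y'_\tau$ there. Hence $A$, and symmetrically $\{Y<Y'\}$, is evanescent, so $Y$ and $Y'$ are indistinguishable.

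The predictable projection is obtained by the identical scheme, taking as generating projections the left-limit processes $t\mapsto M_{t-}$ (for which $\E[Z\1_{\{\tau<\infty\}}\mid\F_{\tau-}]=M_{\tau-}\1_{\{\tau<\infty\}}$ at every predictable $\tau$, by optional sampling at the announcing sequence) and invoking the predictable section theorem for uniqueness. \emph{The main obstacle} is the section theorem itself: it is the non-elementary ingredient on which both uniqueness and the identification of the generating-class projections ultimately rest, everything else being monotone-class bookkeeping together with conditional monotone and dominated convergence.
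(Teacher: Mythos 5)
The paper offers no proof of this statement at all---it is quoted verbatim, with citation, from Dellacherie--Meyer (p.~116)---and your argument is essentially the standard proof given in that reference: monotone class reduction to the multiplicative class of processes $f(t)Z(\omega)$, projected through c\`adl\`ag versions of the uniformly integrable martingale $\E[Z\mid\F_t]$ (its left-limit process in the predictable case, sampled along an announcing sequence), with uniqueness, and the monotone comparison of projections, resting on the optional and predictable section theorems; so your proposal is correct and matches the intended proof. One cosmetic repair: in the passage from bounded to integrable $X$, extracting ``a suitable subsequence'' of the ${}^{o}X^{(n)}$ is shaky (simultaneous pointwise convergence of these optional processes is not automatic); instead truncate $X^{+}$ and $X^{-}$ separately by $X^{\pm}\wedge n\uparrow X^{\pm}$, use the section theorem to see that the corresponding projections are monotone up to evanescence, and define ${}^{o}X$ as the difference of the two increasing limits, which are finite off an evanescent set because $\E[\sup_{t}|X_t|]<\infty$.
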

\begin{Theorem}\lb{thmcontproj}
(\cite{Dellacherie}, pp.119, 125)
Let $X$ be a measurable  process such that 
$\E[\sup_{t\ge 0}|X_t|]<\infty$ and $^{o}X$, $^{p}X$  are respectively its optional  and predictable projections.
\begin{enumerate}[i)]
\item If $X$ is right continuous (resp. rcll), $^{o}X$ is right continuous (resp. rcll).
\item If $X$ is left continuous (resp. lcrl), $^{p}X$ is left continuous(resp. lcrl).
\end{enumerate}
\label{della}
\end{Theorem}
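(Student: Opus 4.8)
The statement is the classical path-regularity theorem for projections of Dellacherie--Meyer, and the plan is to combine two ingredients: on the analytic side, a dominated-convergence principle for conditional expectations along monotone sequences of stopping times (Hunt's lemma), and on the measure-theoretic side, the optional and predictable section theorems, which guarantee that an optional (resp. predictable) process is determined up to indistinguishability by its values along stopping times (resp. predictable stopping times). Writing $X=X^{+}-X^{-}$, truncating, and using the domination $\E[\sup_{t}|X_t|]<\infty$, one reduces at once to the case where $X$ is bounded, which legitimates every limiting argument below.

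For part (i), I would first record the behaviour of ${}^{o}X$ along decreasing sequences of stopping times. Fix a stopping time $\tau$ and stopping times $\tau_k\downarrow\tau$. Since the filtration satisfies the usual conditions one has $\bigcap_k\F_{\tau_k}=\F_{\tau+}=\F_{\tau}$, and right-continuity of $X$ gives $X_{\tau_k}\to X_{\tau}$ pointwise with $\sup_k|X_{\tau_k}|\le\sup_t|X_t|\in L^{1}$. Hunt's lemma then yields
\[
{}^{o}X_{\tau_k}=\E[X_{\tau_k}\mid\F_{\tau_k}]\longrightarrow\E[X_{\tau}\mid\F_{\tau}]={}^{o}X_{\tau}\quad\P\text{-a.s.}
\]
so that ${}^{o}X$ is right-continuous along every such sequence. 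To upgrade this to genuine path right-continuity I would produce an rcll candidate $Z$ and identify it with ${}^{o}X$ via the optional section theorem; a convenient construction of $Z$ uses the right-continuous dyadic step approximations $X^{(n)}$, whose optional projections are explicitly rcll (on each dyadic cell ${}^{o}X^{(n)}$ coincides with the rcll martingale $s\mapsto\E[X_{(k+1)2^{-n}}\mid\F_s]$). The existence of the right limits of ${}^{o}X$ and their identification with ${}^{o}X$ then reduce, via the section theorem, to the a.s.\ equality established above; the rcll refinement follows by treating left limits in the same spirit.

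Part (ii) is the exact mirror image, carried out with predictable stopping times. For a predictable time $\tau$ with an announcing sequence $\tau_k\uparrow\tau$, $\tau_k<\tau$, one has $\bigvee_k\F_{\tau_k}=\F_{\tau-}$, while left-continuity of $X$ gives $X_{\tau_k}\to X_{\tau}$; the increasing-filtration version of Hunt's lemma then produces the convergence of the relevant conditional expectations towards $\E[X_{\tau}\mid\F_{\tau-}]={}^{p}X_{\tau}$. Invoking the predictable section theorem (which tests predictable processes along predictable stopping times) in place of the optional one, and using left-regularisation instead of right-regularisation, delivers the left-continuity (resp. lcrl) of ${}^{p}X$.

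The main obstacle is not the pointwise convergence: Hunt's lemma is tailored precisely to the simultaneous variation of the integrand $X_{\tau_k}$ and of the conditioning $\sigma$-field $\F_{\tau_k}$. The delicate point is the passage from convergence along monotone sequences of stopping times to honest path regularity. Concretely, the step I would isolate and prove with care is the regularisation lemma: a bounded optional (resp. predictable) process that admits right (resp. left) limits and is right- (resp. left-) continuous along stopping times is indistinguishable from an rcll (resp. lcrl) process. This is exactly where the section theorems do the work, reducing an evanescence statement about the discrepancy between the projection and its regularisation to the already-established a.s.\ equality at an arbitrary (predictable) stopping time.
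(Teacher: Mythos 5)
The paper does not prove this theorem: it is stated in the Appendix as a quotation from Dellacherie--Meyer (pp.\ 119 and 125), so there is no internal proof to compare yours with, and your attempt has to be measured against the classical argument in the cited source --- which it reproduces in outline. Your two ingredients, namely Hunt's conditional dominated-convergence lemma along monotone sequences of stopping times (decreasing, with $\bigcap_k \F_{\tau_k}=\F_{\tau}$, for ${}^{o}X$; increasing/announcing, with $\bigvee_k \F_{\tau_k -}=\F_{\tau -}$, for ${}^{p}X$) and the optional/predictable section theorems to upgrade a.s.\ convergence along (predictable) stopping times to indistinguishability, are exactly the mechanism of the Dellacherie--Meyer proof, and your isolation of the regularisation lemma as the crux is accurate.

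Three details are glossed over and should be written out. (a) The truncation reduction: a.s.\ convergence of ${}^{o}X^{(n)}_{\tau}$ at each stopping time does not by itself transfer path regularity from ${}^{o}X^{(n)}$ to ${}^{o}X$; you need the domination $|{}^{o}X_t-{}^{o}X^{(n)}_t|\le \E[(\sup_s|X_s|-n)^{+}\,|\,\F_t]$ together with Doob's maximal inequality, giving uniform-in-$t$ convergence in probability, so that regularity survives along a subsequence. (b) In part (ii), the identity ${}^{p}X_S=\E[X_S\,|\,\F_{S-}]$ is valid only for \emph{predictable} $S$, so the approximating times in your Hunt argument must themselves be predictable, and the statement $\bigvee_k\F_{\tau_k-}=\F_{\tau-}$ should be justified via $\F_{\tau_k}\subset\F_{\tau_{k+1}-}$ for $\tau_k<\tau_{k+1}$. (c) Your regularisation lemma presupposes the \emph{existence} of one-sided limits, which is the harder half: it is precisely the Dellacherie--Meyer characterisation, via a.s.\ convergence along monotone sequences of (predictable) stopping times, of optional (resp.\ predictable) processes indistinguishable from rcll (resp.\ lcrl) ones; and for the lcrl refinement in (ii) note the extra wrinkle that decreasing limits of predictable times need not be predictable. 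None of these is a fatal flaw --- each is exactly where the section theorems do the work you assign to them --- so your proposal is a correct, if compressed, reconstruction of the textbook proof.
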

\begin{Lemma}\label{cont}
For the Brownian filtration, the optional and predictable $\sigma-$fields coincide which means that the optional projection of a continuous process $X$ such that $\E[\sup_{t\ge 0}|X_t|]<\infty$ is also continuous.
\end{Lemma}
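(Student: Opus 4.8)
The plan is to prove the stated coincidence of the two $\sigma$-fields first, and then to deduce the continuity of the optional projection as an immediate consequence of it together with Theorem \ref{thmcontproj}. The integrability hypothesis $\E[\sup_{t\ge 0}|X_t|]<\infty$ guarantees, by the ambient Theorem, that both projections $^{o}X$ and $^{p}X$ exist. The inclusion of the predictable $\sigma$-field into the optional one holds for any filtration (every left-continuous adapted process is optional), so the whole point is the reverse inclusion, and this is exactly where the Brownian structure is used.

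First I would invoke the martingale representation theorem for the Brownian filtration (see e.g. \cite{Revuz} or \cite{Karatzas3}): every square-integrable $(\mathcal{F}_t)$-martingale is of the form $c+\int_0^{\cdot}\phi_s\,dB_s$, hence has a continuous version, and by localization the same is true of every local martingale. Thus \emph{all} $(\mathcal{F}_t)$-martingales are continuous. From this I would extract two classical consequences. On one hand, there can be no totally inaccessible stopping time $T$, for otherwise the compensated indicator $\1_{[T,\infty)}-A$ (with $A$ its continuous dual predictable projection) would be a martingale jumping at $T$; hence every stopping time is accessible. On the other hand, the filtration is quasi-left-continuous: at a predictable time $T$, applying continuity to $M_t=\E[\xi\,|\,\mathcal{F}_t]$ for $\xi\in L^\infty(\mathcal{F}_T)$ gives $\xi=M_T=M_{T-}\in\mathcal{F}_{T-}$, so $\mathcal{F}_T=\mathcal{F}_{T-}$. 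Combining the two, every accessible time is in fact predictable and $\mathcal{F}_T=\mathcal{F}_{T-}$ holds at every stopping time; this is precisely the Dellacherie--Meyer criterion (\cite{Dellacherie}) for the optional and predictable $\sigma$-fields to coincide, which settles the first assertion.

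It then remains to transfer the coincidence to projections. Because every stopping time $\tau$ is predictable with $\mathcal{F}_\tau=\mathcal{F}_{\tau-}$, the defining identities of $^{o}X$ and $^{p}X$ in the ambient Theorem reduce to the same conditional expectation; since the two projections then agree on the graph of every stopping time and both are optional ($=$ predictable), the section theorem shows that $^{o}X$ and $^{p}X$ are indistinguishable. Now if $X$ is continuous it is at once rcll and lcrl, so Theorem \ref{thmcontproj}(i) gives that $^{o}X$ is rcll while Theorem \ref{thmcontproj}(ii) gives that $^{p}X$ is lcrl; as these are the same process, it is both rcll and lcrl, hence continuous.

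The main obstacle is the passage ``all martingales continuous $\Rightarrow$ optional $=$ predictable'': it is the only genuinely nontrivial input, resting on general-theory results about accessible and totally inaccessible times that I would quote from \cite{Dellacherie} rather than reprove. Once that equivalence is in hand, the identification $^{o}X={}^{p}X$ and the continuity conclusion are routine.
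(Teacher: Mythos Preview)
Your argument is correct: the chain ``martingale representation $\Rightarrow$ all martingales continuous $\Rightarrow$ no totally inaccessible times and $\mathcal{F}_T=\mathcal{F}_{T-}$ at predictable $T$ $\Rightarrow$ optional $=$ predictable'' is the standard route, and your final step combining Theorem~\ref{thmcontproj}(i)--(ii) with $^{o}X={}^{p}X$ to get continuity is clean. Note, however, that the paper does not give its own proof of this lemma at all: it simply records it and defers to \cite{Revuz} (Theorem~3.5, p.~201 and Corollary~5.7, p.~174), so you have in fact supplied what the paper omits rather than taken a different path from it.
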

For more details, we refer to  \cite{Revuz}, Theorem 3.5, pp.201 and Corollary 5.7, pp.174.
\begin{Remark}Let $T>0$ be a real number. The predictable and optional projections of a stochastic measurable process $X=(X_t)_{t\in [0,T]}$ are the ones of the process $X^T=(X_{t\wedge T})_{t\ge 0}$ on the filtered probability space $(\Omega,\mathcal{F},\P,
(\mathcal{F}_{t\wedge T})_{t\geq 0})$.
\end{Remark}
\subsection{Snell envelope of a process}
Let $T\in [0,+\infty]$. An $(\mathcal{F}_{t})_{t\leq T}$-adapted rcll process $U=(U_{t})_{0\leq t\leq T}$, is called of class $[D]$, if the family $\{U_{\tau}\}_{\tau\in \mathcal{T}_{0}}$ is uniformly integrable.
\begin{Proposition}\label{snellenv}(see \cite{Dellacherie}, pp.432) 
Let $U=(U_{t})_{0\leq t\leq T}$ be a process of class $[D]$, then there exists an
$(\mathcal{F}_{t})_{t\leq T}$-adapted $\R$-valued $rcll$ process $N:=(N_{t})_{t\leq T}$ such that:
\begin{enumerate}[i)]
\item $N$ is the smallest $\cadlag$ supermartingale of class [D], called the Snell envelope of $U$, which dominates $U$, i.e., if $(N'_{t})_{0\leq t\leq T}$ is another
c\`{a}dl\`{a}g supermartingale of class $[D]$ such that, $N_{t}'\geq U_{t}$ then $N_{t}'\geq N_{t}$ for any $0\leq t\leq T$.
\item For any $t\leq T$, it holds: 
\begin{equation}\label{eq 1}
 N_{t}=\esssup_{\tau \in
\mathcal{T}_t}\E[U_{\tau}|\mathcal{F}_{t}] \,\, (N_{T}=U_{T}).
\end{equation}
\item    The jumping times of $(N_{t})_{t\leq T}$ are predictable and satisfy:
$$\{\Delta_tN<0\}\subset \{N_{t^{-}}=U_{t^{-}}\}\cap \{\Delta_{t} U<0\}.$$
\end{enumerate}
\end{Proposition}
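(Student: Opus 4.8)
The plan is to construct $N$ directly as the value process of the optimal stopping problem attached to $U$ and then read off the three assertions. First I would \emph{define}, for every $t\le T$,
$$
N_t:=\esssup_{\tau\in\mathcal{T}_t}\E[U_\tau\mid\mathcal{F}_t],
$$
and check this essential supremum is well posed: since $U$ is of class $[D]$ the family $\{U_\tau:\tau\in\mathcal{T}_0\}$ is uniformly integrable, hence so is $\{\E[U_\tau\mid\mathcal{F}_t]:\tau\in\mathcal{T}_t\}$, and the essential supremum of such a family exists. The structural fact I would lean on is that this family is \emph{upward directed}: given $\tau_1,\tau_2\in\mathcal{T}_t$, the set $A=\{\E[U_{\tau_1}\mid\mathcal{F}_t]\ge\E[U_{\tau_2}\mid\mathcal{F}_t]\}$ lies in $\mathcal{F}_t$, so $\tau:=\tau_1\1_A+\tau_2\1_{A^c}\in\mathcal{T}_t$ and $\E[U_\tau\mid\mathcal{F}_t]=\max(\E[U_{\tau_1}\mid\mathcal{F}_t],\E[U_{\tau_2}\mid\mathcal{F}_t])$. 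Consequently there is a sequence $(\tau_n)\subset\mathcal{T}_t$ with $\E[U_{\tau_n}\mid\mathcal{F}_t]\uparrow N_t$ a.s.

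With this directedness in hand I would obtain the supermartingale property and the representation (ii) together. For $s\le t$ every $\tau\in\mathcal{T}_t$ also belongs to $\mathcal{T}_s$, so taking $(\tau_n)$ as above and applying the tower property with monotone convergence (legitimate by uniform integrability),
$$
\E[N_t\mid\mathcal{F}_s]=\lim_n\E[U_{\tau_n}\mid\mathcal{F}_s]\le N_s,
$$
which is the supermartingale inequality, while domination $N_t\ge U_t$ follows by taking $\tau=t$. Minimality, i.e. part (i), is then immediate: if $N'$ is any \cadlag\ supermartingale of class $[D]$ with $N'\ge U$, optional sampling gives $N'_t\ge\E[N'_\tau\mid\mathcal{F}_t]\ge\E[U_\tau\mid\mathcal{F}_t]$ for every $\tau\in\mathcal{T}_t$, and passing to the essential supremum yields $N'_t\ge N_t$. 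At this point $N$ is only defined up to a modification; to produce a genuine \cadlag\ process I would invoke the supermartingale regularization theorem, whose hypothesis is the right-continuity of $t\mapsto\E[N_t]$. Here the right-continuity of $U$ and the class $[D]$ assumption are exactly what let me bound $\E[N_{t+}]$ from below by $\E[N_t]$ through stopping times concentrated just after $t$, giving the \cadlag\ modification that still dominates $U$, is still smallest, and satisfies $N_T=U_T$.

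For part (iii) the plan is to pass to the Doob–Meyer decomposition $N=N_0+M-A$, with $M$ a uniformly integrable martingale and $A$ a predictable, \cadlag, nondecreasing process with $A_0=0$. The negative jumps of $N$ split into jumps of $M$ and of $-A$; since $A$ is predictable its jump times are predictable, which accounts for the predictability statement, and the substance of (iii) is that $A$ can increase only on the contact set $\{N_-=U_-\}$ and only where $U$ itself jumps down. The decisive step is the flat-off/contact argument: on the predictable set where $\Delta A>0$ one shows by a stopping-time comparison that waiting strictly beyond the current instant cannot improve the conditional payoff, which forces $N_{t^-}=U_{t^-}$, and then a downward jump of the value at a predictable time must be matched by a downward jump of the obstacle, i.e. $\Delta_tU<0$; the predictable-projection machinery recalled in Theorem \ref{thmcontproj} is what makes the comparison of $N_{t^-}$ with the obstacle rigorous. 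I expect this last step to be the main obstacle, since it is where the fine structure of optimal stopping genuinely enters, whereas the construction, the supermartingale property, and minimality are soft consequences of the lattice property and uniform integrability.
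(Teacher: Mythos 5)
The paper offers no proof to compare against: Proposition \ref{snellenv} is imported verbatim from the literature with the citation (\cite{Dellacherie}, pp.~432), so your attempt must stand on its own. For parts (i) and (ii) it does: your construction is the classical one of \cite{El-Karoui1} and \cite{Dellacherie} --- the upward-directedness of $\{\E[U_\tau\mid\Fc_t]:\tau\in\Tc_t\}$ via $\tau=\tau_1\1_A+\tau_2\1_{A^c}$ (a stopping time since $A\in\Fc_t$ and $\tau_1,\tau_2\ge t$), the supermartingale inequality by conditional monotone convergence along the maximizing sequence, minimality by optional sampling for class $[D]$ supermartingales, and the c\`adl\`ag regularization resting on right-continuity of $t\mapsto\E[N_t]$, which you correctly trace back to $\E[N_t]=\sup_{\tau\in\Tc_t}\E[U_\tau]$ (directedness again), the right-continuity of $U$ and uniform integrability. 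All of this is correct, if compressed.

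Part (iii), however, contains a genuine gap as written. From $N=N_0+M-A$ you infer predictability of the jump times of $N$ from the predictability of $A$ alone; but in a filtration merely satisfying the usual conditions (the stated setting of the Appendix) the martingale part $M$ may jump at totally inaccessible times, and then so may $N$: if $U$ is itself a uniformly integrable martingale with totally inaccessible negative jumps (a compensated Poisson-type example), then $N=U$ and the jump times of $N$ are not predictable. The claim really lives in the Brownian-filtration setting used throughout the body of the paper, where every martingale is continuous (Lemma \ref{cont}), so that $\Delta N=-\Delta A$ and predictability of the jump times is automatic; your argument should invoke this explicitly. Moreover the ``flat-off/contact'' step, which is the entire substance of (iii), is asserted rather than proved. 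What is needed is the identity, at a predictable time $\tau$,
$$
N_{\tau^-}=\max\bigl(U_{\tau^-},\,\E[N_\tau\mid \Fc_{\tau^-}]\bigr),
$$
from which $\Delta A_\tau>0$, equivalently $N_{\tau^-}>\E[N_\tau\mid\Fc_{\tau^-}]$, forces $N_{\tau^-}=U_{\tau^-}$; then on $\{\Delta_\tau N<0\}$ one concludes with $\Delta_\tau U=U_\tau-U_{\tau^-}=U_\tau-N_{\tau^-}\le N_\tau-N_{\tau^-}=\Delta_\tau N<0$, using $U_\tau\le N_\tau$. Your appeal to the predictable-projection machinery of Theorem \ref{thmcontproj} points in the right direction, but without this lemma the ``decisive step'' you yourself flag as the main obstacle is not actually carried out.
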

\begin{Proposition}\label{increaseconv}(\cite{Djehall}, Proposition 2)
    Let $(U_n)_{n\geq 0}$ and $U$ be c\`adl\`ag processes of class $[D]$  such that the sequence of processes
$(U_n)_{n\geq 0}$ converges increasingly and pointwisely to $U$, then the snell envelope of $ (U_{n})_{n\geq 0}$ converges increasingly and pointwisely to the one of $U$. 
\end{Proposition}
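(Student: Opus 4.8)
The plan is to work directly from the variational characterization of the Snell envelope recorded in Proposition~\ref{snellenv}(ii), and to transfer the increasing convergence of the underlying processes to their envelopes by means of the conditional monotone convergence theorem, the integrability required at that step being exactly what the class-$[D]$ hypothesis supplies. Throughout, write $N^n$ for the Snell envelope of $U_n$ and $N$ for that of $U$, so that for every $t\le T$,
$$
N^n_t=\esssup_{\tau\in\mathcal{T}_t}\E[(U_n)_\tau|\mathcal{F}_t]
\qquad\text{and}\qquad
N_t=\esssup_{\tau\in\mathcal{T}_t}\E[U_\tau|\mathcal{F}_t].
$$

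First I would record the monotonicity of the sequence $(N^n)_{n\ge 0}$. Since $U_n\le U_{n+1}\le U$ pointwise, for every stopping time $\tau\in\mathcal{T}_t$ we have $\E[(U_n)_\tau|\mathcal{F}_t]\le\E[(U_{n+1})_\tau|\mathcal{F}_t]\le\E[U_\tau|\mathcal{F}_t]$, and taking the essential supremum over $\tau$ yields $N^n_t\le N^{n+1}_t\le N_t$, $\P$-a.s. Because $N$ is itself a class-$[D]$ supermartingale and hence a.s. finite, the increasing pointwise limit $\bar N_t:=\lim_{n\to\infty}N^n_t$ exists and already satisfies $\bar N_t\le N_t$, $\P$-a.s., for every $t\le T$.

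The heart of the argument is the reverse inequality $\bar N_t\ge N_t$, which I would obtain by fixing the stopping time \emph{before} passing to the limit, so that no interchange of $\esssup$ with $\lim_n$ is ever needed. Fix $\tau\in\mathcal{T}_t$; then $N^n_t\ge\E[(U_n)_\tau|\mathcal{F}_t]$ for every $n$. Since $(U_0)_\tau\le(U_n)_\tau\le U_\tau$ with $(U_0)_\tau$ and $U_\tau$ both integrable (the families $\{(U_0)_\sigma\}_{\sigma}$ and $\{U_\sigma\}_{\sigma}$ being uniformly integrable by class $[D]$) and $(U_n)_\tau\nearrow U_\tau$ a.s., the conditional monotone convergence theorem applied to $(U_n)_\tau-(U_0)_\tau\ge 0$ gives $\E[(U_n)_\tau|\mathcal{F}_t]\nearrow\E[U_\tau|\mathcal{F}_t]$. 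Letting $n\to\infty$ therefore produces $\bar N_t\ge\E[U_\tau|\mathcal{F}_t]$, and since $\tau\in\mathcal{T}_t$ was arbitrary, taking the essential supremum over $\tau$ yields $\bar N_t\ge N_t$. Combined with the previous step this gives $\bar N_t=N_t$, $\P$-a.s., for each $t$, which is precisely the asserted increasing pointwise convergence of the Snell envelopes; no separate verification that $\bar N$ is itself a Snell envelope is needed, as the limit is identified directly with $N$.

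I expect the only genuinely delicate point to be the justification of the limit exchange inside the conditional expectation: it must be performed before the essential supremum is taken, which is exactly why I would fix $\tau$ first. The integrability that powers the conditional monotone convergence theorem is furnished by the class-$[D]$ assumption through the integrability of $(U_0)_\tau$ and $U_\tau$, so no further hypotheses are required; everything else reduces to routine monotonicity of conditional expectation and of essential suprema.
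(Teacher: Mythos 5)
Your proof is correct, but note that the paper itself contains no argument for this proposition: it is simply quoted from \cite{Djehall} (Proposition 2), where the proof runs along a genuinely different line. There one sets $\bar N:=\lim_n N^n$ and shows that $\bar N$ is itself a c\`adl\`ag supermartingale of class $[D]$ dominating $U$ --- the delicate ingredient being the Dellacherie--Meyer theorem that an increasing limit of c\`adl\`ag supermartingales is indistinguishable from a c\`adl\`ag process --- whence $\bar N\geq N$ by the minimality property of Proposition \ref{snellenv}(i), the reverse inequality being immediate from $N^n\leq N$. You instead identify the limit directly through the representation of Proposition \ref{snellenv}(ii), fixing the stopping time $\tau$ \emph{before} letting $n\to\infty$ and invoking the conditional monotone convergence theorem, whose integrability hypothesis is indeed supplied by class $[D]$ (making $(U_0)_\tau$ and $U_\tau$ integrable); the concluding step $\bar N_t\geq\esssup_\tau\E[U_\tau|\F_t]$ is legitimate because the essential supremum is by definition the smallest a.s.\ upper bound. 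Your route is more elementary, bypassing the regularity theorem for monotone limits of supermartingales entirely; what it delivers less of is pathwise information, since it yields $\bar N_t=N_t$ $\P$-a.s.\ for each fixed $t$ (a modification statement, with the exceptional null set depending on $t$), whereas the minimality route shows at once that the limit process is, up to indistinguishability, the c\`adl\`ag Snell envelope of $U$ as a process. For the uses made in this paper --- passing to the limit at a fixed time or stopping time in \eqref{infordrenn1} --- your per-$t$ identification is exactly what is needed, so your proof stands as a valid, self-contained alternative.
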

\begin{Proposition}\label{cont-opt} (\cite{El-Karoui1},  Proposition II.2.7)
If $U$ is right upper semi-continuous, then the process $N$ is continuous. Furthermore, if $\gamma$ is an $(\mathcal{F}_{t})_{t\leq T}$-stopping time and \\ $\tau_{\gamma}^{\ast}=\inf\{s\geq\gamma, N_{s}=U_{s}\}\wedge T$, then $\tau_{\gamma}^{\ast}$ is optimal after $\gamma$, i.e.,
\begin{equation}
N_{\gamma}=\esssup_{\tau \in
\mathcal{T}_\gamma}\E[U_{\tau}|\mathcal{F}_{\gamma}]=\E[U_{\tau_{\gamma}^{\ast}}|\mathcal{F}_{\gamma}]=\E[N_{\tau_{\gamma}^{\ast}}|\mathcal{F}_{\gamma}].
\label{optimtime}
\end{equation}
\end{Proposition}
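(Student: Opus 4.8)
The plan is to establish the two assertions in turn, deriving the optimality of $\tau_{\gamma}^{\ast}$ from the continuity of $N$.

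For the continuity, I would begin from Proposition \ref{snellenv}: $N$ is a c\`adl\`ag supermartingale of class $[D]$, all its jump times are predictable, and each strictly negative jump is confined to the contact set, $\{\Delta_t N<0\}\subset\{N_{t^-}=U_{t^-}\}\cap\{\Delta_t U<0\}$. The first step is to discard downward jumps: the right upper semi-continuity of $U$ --- which in the situations where the proposition is applied means that $U$ is c\`adl\`ag with only non-negative jumps --- makes $\{\Delta_t U<0\}$ evanescent, so the inclusion above gives $\Delta_t N\ge 0$ at every (predictable) jump time. The second step upgrades this to genuine continuity using only the supermartingale property: at a predictable jump time $S$ one has $\E[\Delta_S N\,|\,\mathcal{F}_{S^-}]=\E[N_S\,|\,\mathcal{F}_{S^-}]-N_{S^-}\le 0$, and a non-negative random variable with non-positive $\mathcal{F}_{S^-}$-conditional mean is a.s. zero; hence $\Delta_S N=0$. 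As the jumps are exhausted by a sequence of predictable times, $N$ is continuous, and then by Theorem \ref{thmcontproj} (Lemma \ref{cont} in the Brownian case) the obstacle $U$ inherits continuity as well.

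With $N$ continuous I would then turn to the optimality of $\tau_{\gamma}^{\ast}=\inf\{s\ge\gamma:\ N_s=U_s\}\wedge T$, which is a stopping time as the entry time after $\gamma$ of the closed random set $\{N=U\}$. Two ingredients give the conclusion. First, continuity of $N$ and $U$ together with $N_T=U_T$ from Proposition \ref{snellenv}(ii) yields $N_{\tau_{\gamma}^{\ast}}=U_{\tau_{\gamma}^{\ast}}$. Second, in the Doob--Meyer decomposition $N=N_0+M-A$ (with $M$ a uniformly integrable martingale and $A$ predictable increasing, $A$ continuous because $N$ is), the minimality of $N$ in Proposition \ref{snellenv}(i) forces the flat-off-the-contact-set property $\int\1_{\{N_{t^-}>U_{t^-}\}}\,dA_t=0$; since $N>U$ on $[\gamma,\tau_{\gamma}^{\ast})$, $A$ is constant there, so $(N_{t\wedge\tau_{\gamma}^{\ast}})_{t\ge\gamma}$ is a martingale and $N_\gamma=\E[N_{\tau_{\gamma}^{\ast}}\,|\,\mathcal{F}_\gamma]=\E[U_{\tau_{\gamma}^{\ast}}\,|\,\mathcal{F}_\gamma]$. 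Since Proposition \ref{snellenv}(ii) also gives $N_\gamma=\esssup_{\tau\in\mathcal{T}_\gamma}\E[U_\tau\,|\,\mathcal{F}_\gamma]\ge\E[U_{\tau_{\gamma}^{\ast}}\,|\,\mathcal{F}_\gamma]$, all the quantities coincide, which is precisely the chain \eqref{optimtime}.

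The main obstacle I expect is the continuity statement, not the optimality. Once $N$ is known continuous, the optimality is the classical ``martingale up to the first contact time'' argument, whose only non-routine point is the flat-off-contact-set property of $A$, a standard consequence of the minimality characterisation. The delicate interface in the continuity proof is between the jump description of Proposition \ref{snellenv}(iii) and the regularity of $U$: eliminating negative jumps requires $\{\Delta_t U<0\}$ to be negligible, after which the key and slightly non-obvious point is that a c\`adl\`ag supermartingale all of whose jumps are non-negative is in fact jump-free.
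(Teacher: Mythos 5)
The paper contains no proof of this proposition: it is imported verbatim from El Karoui (\cite{El-Karoui1}, Proposition II.2.7) as part of the Appendix toolkit, so there is no internal argument to compare yours against. Your reconstruction is the classical one and is correct in outline. You rightly decode the (mis)stated hypothesis ``right upper semi-continuous'' as ``c\`adl\`ag with only non-negative jumps'': for a c\`adl\`ag process right u.s.c.\ is automatic, and the condition the body of the paper actually invokes (e.g.\ for $O^{n+1}(a)$, continuous except a possible positive jump at $T-\Delta$) is upper semi-continuity from the left. Your continuity argument --- eliminate negative jumps of $N$ via Proposition \ref{snellenv}(iii), then eliminate the remaining non-negative jumps, all located at predictable times $S$, via $\E[\Delta_S N\mid \mathcal{F}_{S^-}]\le 0$ for a class $[D]$ supermartingale --- is exactly the standard mechanism. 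For the optimality you take the Doob--Meyer route (flat-off-the-contact-set property of the compensator $A$), whereas the proof in El Karoui's notes proceeds via the approximating times $\inf\{s\ge \gamma:\ \lambda N_s\le U_s\}$ with $\lambda\uparrow 1$, which handles merely u.s.c.\ obstacles without Doob--Meyer; your route is shorter once continuity of $N$ is acquired, at the price of the flat-off lemma, which you assert rather than prove (acceptable here, since the paper itself cites the whole proposition, but a reference would be in order).

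Two blemishes should be repaired. First, the remark that ``the obstacle $U$ inherits continuity'' from $N$ via Theorem \ref{thmcontproj} is false and unnecessary: in the very applications of the proposition the obstacle has a positive jump at $T-\Delta$. Nothing in your argument needs it: right-continuity of $N-U$ makes the contact set $\{N=U\}$ right-closed, so its debut $\tau_{\gamma}^{\ast}$ satisfies $N_{\tau_{\gamma}^{\ast}}=U_{\tau_{\gamma}^{\ast}}$, with $N_T=U_T$ from Proposition \ref{snellenv}(ii) covering $\{\tau_{\gamma}^{\ast}=T\}$. Second, ``since $N>U$ on $[\gamma,\tau_{\gamma}^{\ast})$, $A$ is constant there'' silently passes from $N>U$ to $N_{-}>U_{-}$, which is what the flat-off property $\int \1_{\{N_{t^-}>U_{t^-}\}}\,dA_t=0$ actually controls; the gap closes in one line under your standing hypotheses: if $N_{s^-}=U_{s^-}$ at some $s\in(\gamma,\tau_{\gamma}^{\ast})$, then continuity of $N$ and $\Delta U_s\ge 0$ give $N_s=N_{s^-}=U_{s^-}\le U_s\le N_s$, hence $N_s=U_s$, contradicting $s<\tau_{\gamma}^{\ast}$. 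With these repairs (and the continuity of $A$, which you correctly note, extending flatness to the closed interval), the chain $N_\gamma=\E[N_{\tau_{\gamma}^{\ast}}\mid\mathcal{F}_\gamma]=\E[U_{\tau_{\gamma}^{\ast}}\mid\mathcal{F}_\gamma]\le \esssup_{\tau\in\mathcal{T}_\gamma}\E[U_{\tau}\mid\mathcal{F}_\gamma]=N_\gamma$ delivers \eqref{optimtime} as claimed.
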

\end{document}